\newcommand{\inner}[1]{\left\langle #1 \right\rangle}
\newcommand{\norm}[1]{\left\Vert #1\right\Vert}
\newcommand{\bb}[1]{\mathbb{#1}}
\newcommand{\conv}[0]{\mathrm{conv}\,}
\newcommand{\ri}[0]{\mathrm{ri}\,} 
\newcommand{\X}{{ \ca{X} }}
\newcommand{\ca}[1]{\mathcal{#1}}
\newcommand{\tp}{^\top}
\newcommand{\A}{\ca{A}}
\newcommand{\xk}{{x_{k} }}
\newcommand{\yk}{{y_{k} }}
\newcommand{\zk}{{z_{k} }}
\newcommand{\xkp}{{x_{k+1} }}
\newcommand{\ykp}{{y_{k+1} }}
\newcommand{\Rn}{\mathbb{R}^n}
\newcommand{\Rp}{\mathbb{R}^p}
\newcommand{\Rm}{\mathbb{R}^m}
\newcommand{\Y}{\ca{Y}}
\newcommand{\K}{\ca{K}}
\newtheorem{theo}{Theorem}[section]
\newtheorem{lem}[theo]{Lemma}
\newtheorem{prop}[theo]{Proposition}
\newtheorem{example}[theo]{Example}
\newtheorem{defin}[theo]{Definition}
\newtheorem{assumpt}[theo]{Assumption}
\DeclareMathOperator*{\argmin}{arg\,min}
\DeclareMathOperator*{\argmax}{arg\,max}
\numberwithin{equation}{section}
\title{A Minimization Approach for Minimax Optimization with Coupled Constraints\thanks{All authors contributed equally to this work and are listed in alphabetical order.}}
\author{Xiaoyin Hu\thanks{School of Computer and Computing Science, Hangzhou City University, Hangzhou, 310015, China.  (hxy@amss.ac.cn).}, ~
Kim-Chuan Toh\thanks{Department of Mathematics, and Institute of Operations Research and Analytics, National University of Singapore, Singapore 119076. (mattohkc@nus.edu.sg).}, ~
Shiwei Wang\thanks{Institute of Operational Research and Analytics, National University of Singapore, Singapore. (wangshiwei@amss.ac.cn).}, ~
Nachuan Xiao\thanks{Institute of Operational Research and Analytics, National University of Singapore, Singapore. (xnc@lsec.cc.ac.cn).}}
\begin{document}
\maketitle

\begin{abstract}
    In this paper, we focus on the nonconvex-strongly-concave minimax optimization problem (MCC), where the inner maximization subproblem contains constraints that couple the primal variable of the outer minimization problem. We prove that by introducing the dual variable of the inner maximization subproblem, (MCC) has the same first-order minimax points as a nonconvex-strongly-concave minimax optimization problem without coupled constraints (MOL). We then extend our focus to a class of nonconvex-strongly-concave minimax optimization problems (MM) that generalize (MOL). By performing the partial forward-backward envelope to the primal variable of the inner maximization subproblem, we propose a minimization problem (MMPen), where its objective function is explicitly formulated. We prove that the first-order stationary points of (MMPen) coincide with the first-order minimax points of (MM). Therefore, various efficient minimization methods and their convergence guarantees can be directly employed to solve (MM), hence solving (MCC) through (MOL). Preliminary numerical experiments demonstrate the great potential of our proposed approach. 
\end{abstract}
	
\section{Introduction}

In this paper, we consider the following minimax optimization problem with coupled constraints, 
\begin{equation}
    \label{Prob_Con_Minmax}
    \tag{MCC}
    \min_{x \in \X} \left( \max_{y \in \Y, ~ c(x, y) \in \K} ~  \phi(x, y) :=  g(x, y) + r_1(x) \right), 
\end{equation}
where $\X$ is a closed subset of $\Rn$ (not necessarily convex), $\Y$ is a closed convex subset of $\Rp$, and $\K$ is a closed convex cone in $\Rm$. We present the basic assumptions on \eqref{Prob_Con_Minmax} in the following.  
\begin{assumpt}
    \label{Assumption_f}
    \begin{enumerate}
        \item $g: \Rn \times \Rp \to \bb{R}$ is locally Lipschitz smooth over $\X \times \Y$.
        \item The functions $r_1: \Rn \to \bb{R}$ is locally Lipschitz continuous. 
        \item For any $x \in \X$, the function $y \mapsto  g(x, y)$ is $\mu$-strongly concave, where $\mu$ is independent of $x$. 
        \item The constraint mapping $c: \X \times \Y \to \bb{R}^m$ is locally Lipschitz smooth. Moreover, for any $x \in \X$, we assume that 
        \begin{enumerate}
            \item for any $\lambda \in \K^{\circ}$, the function $y \mapsto \inner{\lambda, c(x, y)}$ is convex over $\Y$;
            \item  for any $y \in \Y$ that satisfies $c(x, y) \in \K$, the following nondegeneracy condition for the inner maximization subproblem holds, 
            \begin{equation*}
                \bb{R}^{m+p}  = 
                \left[
                \begin{smallmatrix}
                    \nabla_y c(x, y)\tp\\
                    I_p\\
                \end{smallmatrix}
                \right] \Rp 
                + 
                \mathrm{lin}\left(
                \left[
                \begin{smallmatrix}
                    \ca{T}_{\K}(c(x, y))\\
                    \ca{T}_{\Y}(y)\\
                \end{smallmatrix}
                \right]
                \right);
            \end{equation*}
            \item  there exists $y \in \Y$ such that $y \in \mathrm{ri}(\Y)$ and $c(x, y) \in \mathrm{ri}(\K)$.
        \end{enumerate}
    \end{enumerate}
\end{assumpt}
Here $\ca{T}_{\K}(\cdot)$ and $\ca{T}_{\Y}(\cdot)$ are the tangent cones of $\K$ and $\Y$, respectively. Moreover, $\nabla_y c: \X \times \Y \to \bb{R}^{p \times m}$ is the partial Jacobian of $c$ with respect to its $y$-variable. Furthermore, for any given subset $\tilde{\K}$, the notation $\K^{\circ}$, $\mathrm{lin}\left( \tilde{\K} \right)$, and $\ri(\tilde{\K})$ refer to the polar cone of $\tilde{\K}$, the largest subspace contained in $\tilde{\K}$, and the relative interior of $\tilde{\K}$, respectively.  In addition, for any closed convex subset $\Omega$, we say that a continuous function $f: \Omega \to \bb{R}$ is $\mu$-strongly concave, if for any $x,y \in \Omega$, it holds that $f(y) \leq f(x) + \inner{v, y-x} - \frac{\mu}{2} \norm{y-x}^2$ for any $v \in \partial f(x)$, where $\partial f(x)$ refers to the Clarke subdifferential of $f$ at $x$.

When there is no coupled constraint in \eqref{Prob_Con_Minmax} (i.e., $c(x, y) = 0$ holds for all $(x, y) \in \X\times \Y$), the minimax optimization problem \eqref{Prob_Con_Minmax} reduces to the nonconvex-strongly-concave minimax optimization problem. Therefore, the minimax optimization problem that takes the form of \eqref{Prob_Con_Minmax} 
has wide applications in various areas, such as adversarial training tasks \cite{goodfellow2014explaining,madry2018towards,sinha2018certifiable,tsaknakis2023minimax,hu2022improved},  bilevel optimization with lower-level constraints \cite{lu2023solving}, generative adversarial networks \cite{gidel2018a,goodfellow2014generative,sanjabi2018convergence}, etc. Specifically, \cite{tsaknakis2023minimax,lu2023first} demonstrate how the minimax optimization model \eqref{Prob_Con_Minmax}, which incorporates non-trivial coupled constraints, is applied to tackle resource allocation and network flow issues in the context of adversarial attacks.

For the minimax optimization problem \eqref{Prob_Con_Minmax}, we denote 
\begin{equation}
    \label{Eq_intro_defin_Phi}
    \Phi(x) := \max_{y \in \Y, ~ c(x, y)\in \K} \phi(x,  y).
\end{equation}
Then  \eqref{Prob_Con_Minmax} is equivalent to the following minimization problem over $\X$:
\begin{equation}
    \label{Eq_intro_F_min}
    \min_{x \in \X} ~ \Phi(x).
\end{equation}
Therefore, the first-order, local and global {\it minimax points} of \eqref{Prob_Con_Minmax} can viewed as the first-order, local and global stationary points of the minimization problem  \eqref{Eq_intro_F_min}, respectively (see Section 2 for detailed definitions). However, as the inner maximization subproblem of \eqref{Prob_Con_Minmax} involves coupled constraints, the subdifferentials of $\Phi$ usually do not have explicit formulations, making the direct minimization of $\Phi$ over $\X$ challenging in practice. 

Motivated by the Lagrangian dual approach \cite{hestenes1969multiplier,powell1969method} and the duality techniques for minimax optimization (e.g., \cite{tsaknakis2023minimax,lu2023first}), we consider the following reformulation of \eqref{Prob_Con_Minmax},
\begin{equation}
    \label{Eq_intro_reformulate}
    \min_{x \in \X} \left( \max_{y \in \Y} \min_{\lambda \in \K^{\circ}} ~  L(x, \lambda, y) \right).
\end{equation}
Here $L(x, \lambda, y)$ is the Lagrangian function for the inner maximization subproblem of \eqref{Prob_Con_Minmax}, which is defined as 
\begin{equation}
    L(x, \lambda, y) := \phi(x, y) - \inner{\lambda, c(x, y)}. 
\end{equation}
Since $L(x, \lambda, y)$ is strongly concave with respect to the $y$-variable for any given $x \in \X$ and $\lambda \in \K^{\circ}$, the strong duality \cite[Theorem 37.3]{rockafellar1970convex} of the inner maximization subproblem of \eqref{Prob_Con_Minmax} illustrates that the min-max-min optimization problem \eqref{Eq_intro_reformulate} is equivalent to the following min-min-max optimization problem under Assumption \ref{Assumption_f}, 
\begin{equation}
    \label{Prob_Con_Minmax_Reform}
    \min_{x \in \X} \left(  \min_{\lambda \in \K^{\circ} }\max_{y \in \Y} ~  L(x, \lambda, y) \right).
\end{equation}
By merging the minimization with respect to both $x$- and $\lambda$-variables in \eqref{Prob_Con_Minmax_Reform}, we arrive at the following minimax optimization problem without coupled constraints,
\begin{equation}
    \label{Prob_Pen_Minmax}
    \tag{MOL}
    \min_{x \in \X, ~\lambda \in \K^{\circ} } \left(\max_{y \in \Y} ~  L(x, \lambda, y)\right).
\end{equation}
It is worth mentioning that without Assumption \ref{Assumption_f}, \eqref{Prob_Con_Minmax} is not equivalent to \eqref{Prob_Pen_Minmax} 
with respect to their first-order minimax points. In the following, we present an illustrative example showing that  \eqref{Prob_Pen_Minmax} may introduce spurious first-order minimax points to \eqref{Prob_Con_Minmax} in the absence of Assumption \ref{Assumption_f}(4b).
\begin{example}
    \label{Example_counter_1}
    Consider the following minimax optimization problem in $[1, 10] \times \bb{R}$,
    \begin{equation}
        \label{Eq_counter_example_0}
        \min_{1\leq x\leq 10} \left(\max_{y \in \bb{R},~ c(x, y)\leq 0} ~ - \frac{1}{2}(y-2x)^2\right) ,
    \end{equation}
    where $c(x, y) = [y-x, y - x^4]$. 
    Then it is easy to verify that $\Phi(x) = -\frac{1}{2}x^2$ (see \eqref{Eq_intro_defin_Phi} for the definition of $\Phi$), hence the only first-order minimax point of the \eqref{Eq_counter_example_0} is $(x ,y) = (10,  10)$. Moreover, for the minimax optimization problem \eqref{Eq_counter_example_0}, it is worth mentioning that the nondegeneracy condition in Assumption \ref{Assumption_f}(4b) fails at $(x, y) = (1,1)$, while all the other conditions in Assumption \ref{Assumption_f} are satisfied. 

    Next, we show that $(x, y) = (1,1)$ corresponds to a first-order minimax point of \eqref{Prob_Pen_Minmax}. When applying the formulation of \eqref{Prob_Pen_Minmax} to \eqref{Eq_counter_example_0}, we arrive at the following minimax optimization problem,
    \begin{equation}
        \label{Eq_counter_example_1}
        \min_{1\leq x\leq 10, \lambda_1, \lambda_2 \geq 0} \left(\max_{y \in \bb{R}} ~ - \frac{1}{2}(y-2x)^2 - \lambda_1(y-x) - \lambda_2(y - x^4) \right). 
    \end{equation}
    Then it is easy to verify that $(x, \lambda_1, \lambda_2, y) = (1, \frac{2}{3}, \frac{1}{3}, 1)$ is a first-order minimax point of \eqref{Eq_counter_example_1}. Therefore, we can conclude that without Assumption \ref{Assumption_f}(4b), transforming \eqref{Eq_counter_example_0} into \eqref{Eq_counter_example_1} may introduce spurious first-order minimax points. 
\end{example}

To develop efficient optimization methods for solving \eqref{Prob_Con_Minmax}, the pioneering work by \cite{tsaknakis2023minimax} proposes a double-loop method for the following reformulation of \eqref{Prob_Con_Minmax},
\begin{equation}
    \label{Eq_Intro_minimax_reform}
    \min_{\lambda \in \K^{\circ}}\left(\min_{x \in \X} \max_{y \in \Y} ~  L(x, \lambda, y)\right), 
\end{equation}
under the assumptions that $c(x, y)$ is a linear function and $\phi$ is strongly convex with respect to the $x$-variable. Consequently, the validity of Assumption \ref{Assumption_f}(4b) can be inferred from their adopted conditions. Their proposed method requires multiple steps to achieve an approximate minimax point of the inner minimax optimization problem in \eqref{Eq_Intro_minimax_reform}, followed by a gradient-descent step to update the $\lambda$-variable. However, balancing the computational cost of the inner loop and the overall performance of the double-loop method often presents a significant challenge. Furthermore, the assumptions regarding the strong convexity of $\phi$ and the linearity of the coupled constraints limit its applicability to various real-world scenarios. Recently, another work by \cite{lu2023first} develops an efficient Lagrangian-based method to solve \eqref{Prob_Con_Minmax} through \eqref{Prob_Pen_Minmax}, where the non-degeneracy condition in Assumption \ref{Assumption_f}(4b) is absent. Consequently, their proposed method only guarantees convergence to first-order minimax points of \eqref{Prob_Pen_Minmax}, which may not necessarily be first-order minimax points of \eqref{Prob_Con_Minmax}. How to develop efficient optimization methods for \eqref{Prob_Con_Minmax} with guaranteed convergence to its first-order minimax points remains unexplored.

\subsection{Related works}

The minimax optimization problem \eqref{Prob_Pen_Minmax} falls into the category of nonconvex strongly-concave minimax optimization problems, which generally can be formulated as
\begin{equation}
    \label{Prob_Minmax}
    \tag{MM}
    \min_{x \in \X} \left(\max_{y \in \Y} ~  h(x, y) := f(x, y) + r_1(x) - r_2(y) \right).
\end{equation}
Here $f: \Rn \times \Rp \to \bb{R}$ is locally Lipschitz smooth and strongly concave with respect to the $y$-variable for any fixed $x \in \X$. Moreover, $r_2: \Rp \to \bb{R}$ is locally Lipschitz continuous and convex. 

To transform nonsmooth objective functions into their differentiable counterparts, the Moreau envelope is a powerful tool for nonsmooth nonconvex minimization problems \cite{zhang2018convergence,davis2020stochastic}. Moreover, several existing works \cite{attouch1983convergence,grimmer2023landscape} generalize the Moreau envelope from minimization problems to minimax optimization problems. When applied to \eqref{Prob_Minmax}, the Moreau envelope yields the following nonconvex-strongly-concave minimax optimization problem \cite{grimmer2023landscape},
\begin{equation}
    \label{Eq_Intro_Moreau_Minimax}
    \min_{x \in \X} \left(\max_{y \in \Rn} ~ \Psi_{M, \eta}(x, y)\right), 
\end{equation}
where $\Psi_{M, \eta}(x, y) := \max_{v \in \Y} h(x, v) - \frac{1}{2\eta}\norm{v - y}^2$ for a prefixed $\eta > 0$. However, solving the proximal point subproblem just described
may be intractable in practice. Therefore, the objective function $\Psi_{M, \eta}$ is implicitly formulated, in the sense that computing the exact objective functions and gradients of $\Psi_{M, \eta}$ may be challenging in practice. As a result, most existing works on solving \eqref{Prob_Con_Minmax} through \eqref{Eq_Intro_Moreau_Minimax} focus on proximal point methods \cite{rockafellar1976monotone,nemirovski2004prox,tseng1995linear,daskalakis2018limit}. These methods, however, involve intractable proximal subproblems and may exhibit inefficient performance in practical scenarios.

The forward-backward envelope \cite{stella2017forward,themelis2018forward} is a parallel tool to the Moreau envelope, developed to transform nonsmooth nonconvex minimization problems into differentiable nonconvex ones. As demonstrated in \cite{stella2017forward,themelis2018forward}, the forward-backward envelope yields an explicitly formulated objective function, in the sense that the function values and the exact gradients of the forward-backward envelope can be computed directly based on solving the proximal gradient subproblem. However, all the existing works on forward-backward envelope focus on minimization optimization problems, with no existing work investigating how to extend the forward-backward envelope to minimax optimization problems.

For solving the nonconvex-strongly-concave optimization problem \eqref{Prob_Minmax}, most existing optimization methods \cite{jin2020local,hassan2018non,lin2020gradient,lu2020hybrid,yang2022nest,luo2022finding,li2022nonsmooth,yang2024data,zheng2024universal} are developed by extending the optimization methods for minimization problems to minimax ones, based on the descent-ascent scheme. In particular, \cite{xu2023unified} develops the proximal gradient descent-ascent methods for \eqref{Prob_Minmax}, which can be viewed as an extension of proximal gradient methods for minimization problems. Moreover, \cite{luo2022finding} extends the cubic Newton method to minimax optimization problems and proposes a cubic Newton method for \eqref{Prob_Minmax}. In addition, \cite{yang2022nest} extends the AdaGrad method to minimax optimization problems, which yields the NeAda method for solving \eqref{Prob_Minmax}. Although there exist various well-established convergence results for minimization problems, the convergence guarantees of these descent-ascent methods for minimax ones should be re-established. 
On the other hand, while the recent work by \cite{cohen2024alternating} considers transforming \eqref{Prob_Minmax} into a minimization problem, their developed minimization problem \cite[Problem P]{cohen2024alternating} is implicitly formulated. As a result, their proposed framework is restricted to proximal gradient descent-ascent methods. Therefore, we are driven to ask the following question,
\begin{quote}
    Can we directly apply existing optimization methods for minimization problems and their convergence properties to \eqref{Prob_Con_Minmax}, with guaranteed convergence to its first-order minimax points?
\end{quote}

\subsection{Contributions}

We first prove that under Assumption \ref{Assumption_f}, \eqref{Prob_Con_Minmax} and \eqref{Prob_Pen_Minmax}  have the same first-order minimax points. Our results illustrate that solving the minimax problem with coupled constraints \eqref{Prob_Con_Minmax} is equivalent to solving the minimax problem \eqref{Prob_Pen_Minmax} without any coupled constraint. Moreover, as there is no coupled constraint in \eqref{Prob_Pen_Minmax}, various existing efficient methods for nonconvex-strongly-concave minimax optimization can be directly applied to solve \eqref{Prob_Con_Minmax} through \eqref{Prob_Pen_Minmax}, with guaranteed convergence to the first-order minimax points of \eqref{Prob_Con_Minmax}.

Then we focus on developing efficient methods for the nonconvex-strongly concave minimax problem \eqref{Prob_Minmax}, which generalizes \eqref{Prob_Pen_Minmax}. Motivated by \cite{themelis2018forward}, we consider the following partial forward-backward envelope (PFBE) of $f$ with respect to its $y$-variable for \eqref{Prob_Minmax},
\begin{equation}
    \label{Eq_FBE_Y}
    \tag{PFBE}
    \begin{aligned}
        \Psi_{\eta}(x, y) := \max_{v\in \Y} ~ f(x, y) + \inner{\nabla_y f(x, y), v-y} - r_2(v) - \frac{1}{2\eta} \norm{v-y}^2. 
    \end{aligned}
\end{equation}
The PFBE generalizes the forward-backward envelope  \cite{themelis2018forward} but differs in key aspects. In particular, under mild conditions, we show that for the following minimization problem, 
\begin{equation}
    \label{Prob_Pen}
    \tag{MMPen}
    \min_{x \in \X, ~y \in \Y} ~  \Gamma_{(\eta, \alpha)}(x, y) := \alpha \Psi_{\eta}(x, y) - (\alpha-1) f(x, y) + r_1(x) + (\alpha -1) r_2(y),
\end{equation}
its first-order stationary points are exactly the first-order minimax points of \eqref{Prob_Minmax} with sufficiently large but finite $\alpha \geq 1$. 

This equivalence establishes an important link between minimax optimization and minimization optimization, which implies that some descent-ascent methods for \eqref{Prob_Minmax} can be interpreted as descent methods for \eqref{Prob_Pen}. In particular, we show that the subgradient descent-ascent method \cite{cohen2024alternating} can be explained as an inexact subgradient descent method for \eqref{Prob_Pen}. Then we prove the global convergence of the subgradient descent-ascent method under nonsmooth nonconvex settings through the ordinary differential equation (ODE) approach developed by \cite{xiao2023convergence}, where $\Gamma_{(\eta, \alpha)}$ in \eqref{Prob_Pen} serves as a Lyapunov function for the corresponding differential inclusion. 

Furthermore, through the equivalence between \eqref{Prob_Minmax} and  \eqref{Prob_Pen}, various efficient methods for minimization over $\X \times \Y$ can be directly employed to solve \eqref{Prob_Minmax} with convergence guarantees. Preliminary numerical experiments on synthetic test problems illustrate that directly implementing the existing efficient solvers to \eqref{Prob_Pen} can achieve high efficiency in solving \eqref{Prob_Con_Minmax}. These results further demonstrate the great potential of performing PFBE to nonconvex-strongly-concave minimax optimization problems.

\begin{figure}[!tb]
    \footnotesize
    \caption{A brief illustration of our proposed minimization approach for minimax optimization problem with coupled constraints \eqref{Prob_Con_Minmax}.}
    \label{Figure_FBEY}
    \begin{equation*}
        \begin{tikzcd}[row sep=4em,column sep=12em,cells={nodes={draw=black}}]
            \parbox{4.5cm}{\centering Minimax problem \eqref{Prob_Con_Minmax} with coupled constraints} \arrow[d, Leftrightarrow, "\substack{\text{Lagrangian approach} \\\text{under Assumption \ref{Assumption_f}}}"] \arrow[r, dashleftarrow, "\substack{\text{Require re-establishing} \\\text{convergence results}}"]& \parbox{7cm}{\centering Specifically designed multiple-loop methods} \\
            \parbox{4.5cm}{\centering Minimax problem \eqref{Prob_Pen_Minmax} as a special case of \eqref{Prob_Minmax}} \arrow[d, Leftrightarrow, "\substack{\text{PFBE with respect to $y$}\\ \text{under Assumption \ref{Assumption_f_Minmax}} }"]  \arrow[r, Leftarrow, "\substack{\text{Direct implementation}}"] & \parbox{7cm}{\centering Subgradient descent-ascent method \eqref{Eq_SubGDA} for \eqref{Prob_Minmax}} \arrow[d, Leftarrow, "\substack{\text{Special case: inexact}\\\text{subgradient descent method}}"]\\
            \parbox{4.5cm}{\centering Minimization problem in the form of \eqref{Prob_Pen}}  \arrow[r, Leftarrow, "\substack{\text{Direct implementation}}"] & \parbox{7cm}{\centering Existing methods for minimization over $\X \times \Y$ with guaranteed convergence}
        \end{tikzcd}
    \end{equation*}
\end{figure}
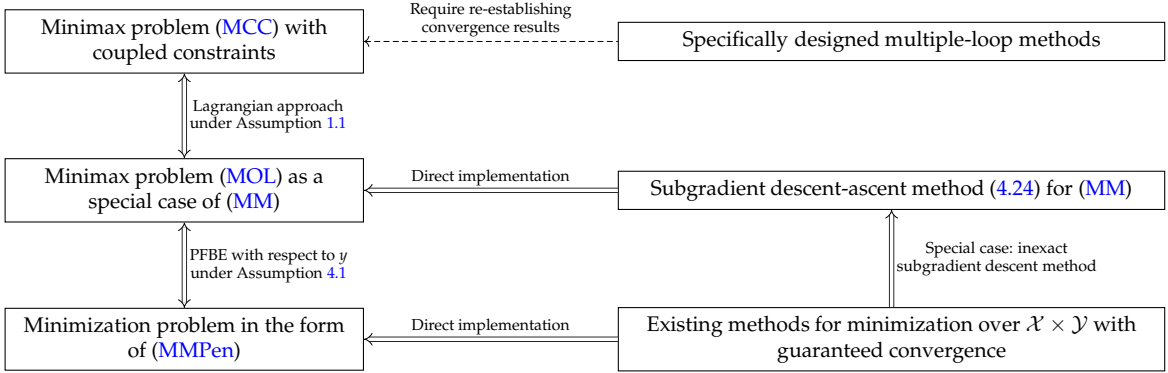

\subsection{Organization}
The outline of the rest of this paper is as follows. In Section 2, we present the notations and preliminary concepts that are necessary for the proofs in this paper. We establish the equivalence between \eqref{Prob_Con_Minmax} and \eqref{Prob_Pen_Minmax} in the aspect of first-order minimax points in Section 3. In Section 4, we focus on the general nonconvex-strongly-concave minimax problem \eqref{Prob_Minmax}, and prove its equivalence to the minimization optimization problem \eqref{Prob_Pen}. Section 5 presents illustrative numerical examples to show that \eqref{Prob_Pen} enables straightforward implementations of various existing efficient minimization approaches for solving \eqref{Prob_Con_Minmax}. We conclude the paper in the last section.

\section{Preliminaries}

\subsection{Notations}
Firstly, we denote $\Rn_+ := \{x = (x_1, \ldots, x_n) \in \Rn : x_i \geq 0 \text{ for all } i = 1, \ldots, n\}$, and $\ca{B}_\delta(x)$ as the closed ball centered at $x$ with radius $\delta$.  For a closed set $\X \subset \Rn$, we denote  $\Pi_{\X}(x):= \argmin_{y \in \X} \norm{y-x}^2$. Besides, we denote $\ca{T}_{\X}(x)$ and $\ca{N}_{\X}(x)$ as the (Clarke) tangent cone and normal cone of $\X$ at $x \in \X$, respectively \cite{davis2020stochastic}.  Moreover, for any subsets $\ca{X}, \ca{Y} \subset \Rn$, we denote $\inner{\X, \Y}:= \{\inner{x, y}: x \in \ca{X}, y\in \ca{Y} \}$,  $|\ca{X}|:= \{|x|: x \in \ca{X}\}$ and $\norm{\ca{X}} := \sup\{ \norm{w} : w\in \ca{X}\}$. We denote $\mathrm{span}(A)$ as the subspace spanned by the column vectors of matrix $A$. We say $\X \perp \Y$ if $\inner{x, y} = 0$ holds for any $x \in \X$ and any $y \in \Y$, and denote $\X^\perp$ as the orthogonal complement of $\X$.  For any $z \in \Rn$, we denote $z + \ca{X} := \{z\} + \ca{X}$ and $\inner{z , \ca{X}} := \inner{\{z\} ,\ca{X}}$. In addition, for any $\ca{Z} \subseteq \bb{R}$ and any $t \in \bb{R}$, we say $t \geq \ca{Z}$ if $t \geq z$ for all $z \in \ca{Z}$.  Furthermore, for any given locally Lipschitz continuous function $f: \Rn \to \bb{R}$, the Clarke subdifferential of $f$ is defined as
    \begin{equation*}
        \partial f(x) := \conv\left( \left\{ \lim_{k\to +\infty} \nabla f(\zk): \text{$f$ is differentiable at $\{\zk\}$, and } \lim_{k\to +\infty} \zk = x   \right\}  \right).
    \end{equation*}

For the minimax optimization problem \eqref{Prob_Con_Minmax}, for any $x \in \X$, we denote 
\begin{equation}
    \label{eq:G}
        y^*(x) := \argmax_{y \in \Y,~ c(x, y) \in \K} ~ \phi(x, y), \quad \lambda^*(x) := \argmin_{\lambda \in \K^{\circ}} ~L(x, \lambda, y^*(x)), \quad G(x) := g(x, y^*(x)).
\end{equation}
Moreover, we denote  
\begin{equation}
    \label{eq:LHG}
    \begin{aligned}
        &L_{P}(x, \lambda, y):= g(x, y) - \inner{\lambda, c(x, y)}, \quad  
        H_T(x, \lambda) := \max_{y \in \Y} ~L_{P}(x, \lambda, y),
        \quad y^*_T(x, \lambda) := \argmax_{y \in \Y} ~L_{P}(x, \lambda, y), 
        \\
        &
         G_T(x) := \min_{\lambda \in \K^{\circ}} H_T(x, \lambda) \;=\;
         \min_{\lambda \in \K^{\circ}} \max_{y \in \Y}  ~ L_{P}(x, \lambda, y), \quad \lambda^*_T(x):=  \argmin_{\lambda \in \K^{\circ} }~ H_T(x, \lambda).
    \end{aligned}
\end{equation}


\subsection{Optimality conditions}
In this subsection, we present the optimality condition for the minimax problem \eqref{Prob_Con_Minmax}, \eqref{Prob_Minmax}, and the minimization problem \eqref{Prob_Pen}.

We first present the definitions on the global and local minimax points of \eqref{Prob_Con_Minmax} and \eqref{Prob_Pen_Minmax}, which follows from \cite{jiang2023optimality,tsaknakis2023minimax}. 
\begin{defin}
    \label{Def_Globalstat_MinimaxCon}
    We say that $(\tilde{x}, \tilde{y}) \in \X \times \Y$ is a global minimax point of  \eqref{Prob_Con_Minmax}, if 
    \begin{equation}
        \tilde{x} \in \argmin_{x \in \X} ~\Phi(x), \quad \text{and} \quad \tilde{y} \in y^*(\tilde{x}).
    \end{equation}

    Moreover, we say that $(\tilde{x}, \tilde{y}) \in \X \times \Y$ is a local minimax point of  \eqref{Prob_Con_Minmax}, if there exists $\delta > 0$ such that 
    \begin{equation}
        \tilde{x} \in \argmin_{x \in \X \cap \ca{B}_{\delta}(\tilde{x})} ~\Phi(x), \quad \text{and} \quad \tilde{y} \in y^*(\tilde{x}).  
    \end{equation}
\end{defin}

\begin{defin}
    We say that $(\tilde{x}, \tilde{\lambda}, \tilde{y}) \in \X \times \K^{\circ} \times \Y$ is a global minimax point of  \eqref{Prob_Pen_Minmax}, if
    \begin{equation}
        (\tilde{x}, \tilde{\lambda}) \in \argmin_{x \in \X, \lambda \in \K^{\circ}} ~ H_T(x, \lambda), \quad \text{and} \quad \tilde{y} \in y^*_T(\tilde{x}, \tilde{\lambda}). 
    \end{equation} 
    Moreover, we say that $(\tilde{x}, \tilde{\lambda}, \tilde{y}) \in \X \times \K^{\circ} \times \Y$ is a local minimax point of  \eqref{Prob_Con_Minmax}, if there exists $\delta > 0$ such that 
    \begin{equation}
        (\tilde{x}, \tilde{\lambda}) \in \argmin_{ (x, \lambda) \in 
        (\X \times \K^{\circ}) \cap \ca{B}_{\delta}(\tilde{x}, \tilde{\lambda})  } ~ H_T(x, \lambda), \quad  \text{and} \quad  \tilde{y} \in y^*_T(\tilde{x}, \tilde{\lambda}). 
    \end{equation} 
\end{defin}
Then, we present the definition on the first-order minimax points of \eqref{Prob_Con_Minmax}.
\begin{defin}
    \label{Def_FOSP_MinimaxCon}
    We say that $(\tilde{x}, \tilde{y}) \in \X \times \Y$ is a first-order minimax point of  \eqref{Prob_Con_Minmax}, if 
    \begin{equation}
        0\in \partial \Phi(\tilde{x}) +  \ca{N}_{\X}(\tilde{x}), \quad \text{and} \quad \tilde{y} \in y^*(\tilde{x}).
    \end{equation}
\end{defin}
It is worth mentioning that from the strong concavity of $\phi$ with respect to the $y$-variable in Assumption \ref{Assumption_f}, $y^*(x)$ is a singleton for any $x \in \X$. Therefore, in the rest of this paper, we regard $y^*(\cdot)$ as a mapping from $\Rn$ to $\Rp$.  

In addition, as the minimax problem \eqref{Prob_Pen_Minmax} does not contain the coupled constraints, its first-order minimax point can be defined as below, which is a natural extension of the optimality condition in \cite[Equation 11(23)]{rockafellar2009variational}. 
\begin{defin}
    We say that $(\tilde{x}, \tilde{\lambda}, \tilde{y}) \in \X \times \K^{\circ} \times \Y$  is a first-order minimax point of \eqref{Prob_Pen_Minmax}, if 
    \begin{equation}
        \left\{
        \begin{aligned}
            &0 \in \nabla_x  g(\tilde{x}, \tilde{y}) - \nabla_x c(\tilde{x}, \tilde{y})\tilde{\lambda} + \partial r_1(\tilde{x}) + \ca{N}_{\X}(\tilde{x}),\\
            &0 \in -\nabla_y g(\tilde{x}, \tilde{y}) + \nabla_y c(\tilde{x}, \tilde{y})\tilde{\lambda} + \ca{N}_{\Y}(\tilde{y}),\\
            &0 \in -c(\tilde{x}, \tilde{y}) + \ca{N}_{\K^{\circ}}(\tilde{\lambda}).
        \end{aligned}
        \right. 
    \end{equation}
\end{defin}

Next, we give the definition of the first-order minimax points of  \eqref{Prob_Minmax}.
\begin{defin}
    \label{Def_FOSP_MinMax}
    We say that $(\tilde{x}, \tilde{y}) \in \X \times \Y$ is a first-order minimax point of \eqref{Prob_Minmax}, if
    \begin{equation}
        \left\{
        \begin{aligned}
            & 0 \in \nabla_x f(\tilde{x}, \tilde{y}) + \partial r_1(\tilde{x}) + \ca{N}_{\X}(\tilde{x}),\\
            & 0 \in -\nabla_y f(\tilde{x}, \tilde{y}) + \partial r_2(\tilde{y}) + \ca{N}_{\Y}(\tilde{y}). 
        \end{aligned}
        \right. 
    \end{equation}
    
    Moreover, we say that $(\tilde{x}, \tilde{y}) \in \X \times \Y$ is an $\varepsilon$-first-order minimax point of \eqref{Prob_Minmax}, if
    \begin{equation}
        \left\{
        \begin{aligned}
            & \mathrm{dist} \left(0, \nabla_x f(\tilde{x}, \tilde{y}) + \partial r_1(\tilde{x}) + \ca{N}_{\X}(\tilde{x}) \right) \leq \varepsilon,\\
            & \mathrm{dist}\left(0, -\nabla_y f(\tilde{x}, \tilde{y}) + \partial r_2(\tilde{y}) + \ca{N}_{\Y}(\tilde{y}) \right) \leq \varepsilon. 
        \end{aligned}
        \right. 
    \end{equation}
\end{defin}

The optimality condition for the minimization problem of the form $\min_{x \in \X, ~y \in \Y}~ h(x, y)$ is given as follows. 
\begin{defin}
    \label{Def_FOSP_Pen}
    For any given locally Lipschitz continuous function $h: \X\times \Y \to \bb{R}$, we say that $(\tilde{x}, \tilde{y}) \in \X \times \Y$ is a first-order stationary point of the minimization problem $\min_{x \in \X, ~y \in \Y}~ h(x, y)$ if  
    \begin{equation}
        \left\{
        \begin{aligned}
            & 0 \in \partial_x h(\tilde{x}, \tilde{y}) + \ca{N}_{\X}(\tilde{x}),\\
            & 0 \in \partial_y h(\tilde{x}, \tilde{y}) + \ca{N}_{\Y}(\tilde{y}). 
        \end{aligned}
        \right. 
    \end{equation}
    Moreover, we say that $(\tilde{x}, \tilde{y}) \in \X \times \Y$ is an $\varepsilon$-first-order stationary point of the minimization problem $\min_{x \in \X, ~y \in \Y}~ h(x, y)$ if 
    \begin{equation}
        \left\{
        \begin{aligned}
            & \mathrm{dist}\left(0, \partial_x h(\tilde{x}, \tilde{y}) + \ca{N}_{\X}(\tilde{x}) \right) \leq \varepsilon,\\
            & \mathrm{dist}\left(0, \partial_y h(\tilde{x}, \tilde{y}) + \ca{N}_{\Y}(\tilde{y}) \right) \leq \varepsilon. 
        \end{aligned}
        \right. 
    \end{equation}
\end{defin}

\section{Minimax Optimization with Coupled Constraints}
In this section, we show that under Assumption \ref{Assumption_f}, \eqref{Prob_Con_Minmax} and \eqref{Prob_Pen_Minmax} are equivalent in the sense of first-order, local, and global minimax points. Section 3.1 introduces several preliminary lemmas. Then in Section 3.2, we present the main results on the equivalence between \eqref{Prob_Con_Minmax} and \eqref{Prob_Pen_Minmax}. 


\subsection{Preliminary lemmas}
In this subsection, we present several preliminary lemmas for our theoretical analysis. We begin with the following two lemmas on the gradient inequality of the convex functions, which appear in \cite[Theorem 12.59]{rockafellar2009variational}. 
\begin{lem}
    \label{Le_subgradient_inequality}
    For any $\mu$-strongly convex function $r: \Rn \to \bb{R}$, and closed convex subset $\Omega$ of $\Rn$, it holds that for any $x_1, x_2 \in \Omega$,  
    \begin{equation}
        r(x_1) \geq r(x_2) + \inner{d, x_1-x_2} + \frac{\mu}{2} \norm{x_1 - x_2}^2,\quad \forall d \in \partial r(x_2) + \ca{N}_{\Omega}(x_2). 
    \end{equation}
\end{lem}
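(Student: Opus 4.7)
The plan is to exploit the additive structure of $\partial r(x_2) + \ca{N}_{\Omega}(x_2)$ and treat its two components separately. Given any $d$ in this Minkowski sum, I would write $d = d_r + d_N$ with $d_r \in \partial r(x_2)$ and $d_N \in \ca{N}_{\Omega}(x_2)$, so that
\begin{equation*}
\inner{d, x_1 - x_2} = \inner{d_r, x_1 - x_2} + \inner{d_N, x_1 - x_2}.
\end{equation*}
The goal is then to upper-bound the right-hand side using a strong convexity inequality for $r$ and a normal-cone inequality for $\Omega$.

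For the subgradient part, I would invoke the standard characterization of $\mu$-strong convexity on $\Rn$: for every $d_r \in \partial r(x_2)$,
\begin{equation*}
r(x_1) \;\geq\; r(x_2) + \inner{d_r, x_1 - x_2} + \frac{\mu}{2}\norm{x_1 - x_2}^2,
\end{equation*}
valid for all $x_1 \in \Rn$ and in particular for $x_1 \in \Omega$. For the normal-cone part, I would use convexity of $\Omega$: by definition of the (convex, equivalently Clarke) normal cone, $\inner{d_N, x_1 - x_2} \leq 0$ whenever $x_1 \in \Omega$. Combining the two displays yields
\begin{equation*}
r(x_1) \;\geq\; r(x_2) + \inner{d_r, x_1 - x_2} + \frac{\mu}{2}\norm{x_1 - x_2}^2 \;\geq\; r(x_2) + \inner{d, x_1 - x_2} + \frac{\mu}{2}\norm{x_1 - x_2}^2,
\end{equation*}
which is the claimed inequality.

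There is essentially no obstacle here; the lemma is a routine consequence of chaining the two standard facts above, and the reference to Theorem 12.59 of Rockafellar--Wets already covers the subgradient inequality for strongly convex functions. The only point worth checking carefully is that the decomposition $d = d_r + d_N$ is legitimate: this is immediate from $d$ lying in the Minkowski sum $\partial r(x_2) + \ca{N}_{\Omega}(x_2)$, and any choice of such a decomposition produces the same final bound since the inequality involves only $d$ itself.
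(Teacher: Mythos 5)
Your argument is correct and is exactly the standard one: split $d=d_r+d_N$, apply the strong-convexity subgradient inequality to $d_r$, and use $\inner{d_N,x_1-x_2}\le 0$ from the definition of the normal cone of the convex set $\Omega$. The paper itself gives no proof of this lemma (it only cites Theorem 12.59 of Rockafellar--Wets), and your write-up supplies the intended justification with no gaps.
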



The following lemma illustrates the differentiablity of $\mathrm{dist}(x, \K)^2$, which directly follows from \cite[Section 3.3, Exercise 12(d)]{borwein2006convex}. 
\begin{lem}
    \label{Le_diff_dist}
    For any closed convex cone $\K$ in $\Rn$, let the function $p_{\K}(x)$ be defined as $p_{\K}(x) = \frac{1}{2} \mathrm{dist}\left( x, \K \right)^2$, then it holds that 
    \begin{equation}
        \nabla p_{\K}(x) = \Pi_{\K^{\circ}}(x). 
    \end{equation}
\end{lem}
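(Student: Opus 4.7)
The plan is to combine two classical facts: the Fréchet differentiability of the squared distance function to a nonempty closed convex set, and Moreau's decomposition theorem applied to the closed convex cone $\K$ together with its polar $\K^\circ$.

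First I would recall that for any nonempty closed convex set $C \subset \Rn$ the function $q_C(x) := \tfrac{1}{2}\mathrm{dist}(x, C)^2$ is continuously differentiable on $\Rn$ with gradient $\nabla q_C(x) = x - \Pi_C(x)$. One obtains this by expanding $q_C(x+h) - q_C(x) = \tfrac{1}{2}\norm{x+h - \Pi_C(x+h)}^2 - \tfrac{1}{2}\norm{x - \Pi_C(x)}^2$ and using the variational inequalities $\inner{x - \Pi_C(x), \Pi_C(x+h) - \Pi_C(x)} \geq 0$ and $\inner{x + h - \Pi_C(x+h), \Pi_C(x) - \Pi_C(x+h)} \geq 0$ that characterize the metric projection onto a closed convex set, together with the nonexpansiveness $\norm{\Pi_C(x+h) - \Pi_C(x)} \leq \norm{h}$. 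Sandwiching the increment yields $q_C(x+h) - q_C(x) = \inner{x - \Pi_C(x), h} + o(\norm{h})$, hence $\nabla q_C(x) = x - \Pi_C(x)$. Specializing $C = \K$ gives $\nabla p_\K(x) = x - \Pi_\K(x)$.

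Next I would invoke Moreau's decomposition for the closed convex cone $\K$: every $x \in \Rn$ admits a unique decomposition $x = u + v$ with $u \in \K$, $v \in \K^\circ$, and $\inner{u, v} = 0$, and moreover $u = \Pi_\K(x)$ and $v = \Pi_{\K^\circ}(x)$. Substituting into the formula from the previous step yields $\nabla p_\K(x) = x - \Pi_\K(x) = \Pi_{\K^\circ}(x)$, which is precisely the desired identity.

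There is no real obstacle here, since both ingredients are textbook material (indeed Borwein--Lewis, Section 3.3, Exercise 12(d), assembles them in exactly this order). The only care needed is to carry out the first step as a genuine Fréchet derivative computation rather than as a subgradient inclusion, so that the conclusion is an equality $\nabla p_\K(x) = \Pi_{\K^\circ}(x)$ and not merely a set-valued containment; this is what enables later use of the chain rule on $p_\K \circ c$ in the main analysis.
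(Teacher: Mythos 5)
Your proof is correct and follows exactly the route the paper intends: the paper gives no proof of this lemma, citing only Borwein--Lewis, Section 3.3, Exercise 12(d), and your argument (Fr\'echet differentiability of the squared distance with $\nabla q_C(x) = x - \Pi_C(x)$, then Moreau's decomposition $x = \Pi_{\K}(x) + \Pi_{\K^{\circ}}(x)$) is precisely the standard derivation behind that citation. No gaps.
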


For any $(x, y) \in \X \times \Y$ satisfying $c(x, y) \in \K$, let  $\tilde{\tau}_{(x,y)}$ be defined as 
\begin{equation}
    \tilde{\tau}_{(x, y)} :=  \inf_{ \{(\lambda,\nu)\, :\, \lambda \in \ca{N}_{\K}(c(x, y)),\, \nu \perp \mathrm{lin}(\ca{T}_{\Y}(y)), \, \norm{\lambda}^2 + \norm{\nu}^2 =1\} }
        \norm{\nabla_y c(x, y) \lambda + \nu},
\end{equation}
then the following lemma presents the nondegeneracy of the Jacobian of the mapping $y \mapsto c(x, y)$. 
\begin{lem}
    \label{Le_nondegeneracy}
    Suppose Assumption \ref{Assumption_f} holds, then for any $(x, y) \in \X \times \Y$ satisfying $c(x, y) \in \K$, it holds that $\tilde{\tau}_{(x, y)} > 0$. 
    Moreover, for any compact subsets $\tilde{\X} \subseteq \X$, $\tilde{\Y} \subseteq \Y$, it holds that 
    \begin{equation}
        \inf_{x \in \tilde{\X}, y\in \tilde{\Y}, c(x, y)\in \K} \tilde{\tau}_{(x, y)} > 0. 
    \end{equation}
\end{lem}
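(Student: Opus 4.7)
The plan is to handle the pointwise claim $\tilde{\tau}_{(x,y)}>0$ by a direct orthogonal-complement reformulation of Assumption \ref{Assumption_f}(4b), and then lift this to a uniform bound via compactness and outer semicontinuity along a convergent subsequence. For the pointwise part, the feasible set in the infimum defining $\tilde{\tau}_{(x,y)}$ is a closed subset of the unit sphere in $\bb{R}^{m+p}$, hence compact, while $(\lambda,\nu)\mapsto \|\nabla_y c(x,y)\lambda+\nu\|$ is continuous, so the infimum is attained. Assuming for contradiction it equals zero yields $(\lambda,\nu)$ on the unit sphere with $\lambda\in \ca{N}_{\K}(c(x,y))$, $\nu\perp \mathrm{lin}(\ca{T}_{\Y}(y))$, and $\nabla_y c(x,y)\lambda+\nu=0$. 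The standard inclusion $\ca{N}_{\K}(z)=\ca{T}_{\K}(z)^{\circ}\subseteq \mathrm{lin}(\ca{T}_{\K}(z))^{\perp}$ (every polar vector annihilates the lineality of the cone) places $(\lambda,\nu)$ in $\mathrm{lin}(\ca{T}_{\K}(c(x,y)))^{\perp}\times \mathrm{lin}(\ca{T}_{\Y}(y))^{\perp}$, while $\nabla_y c(x,y)\lambda+\nu=0$ is precisely the orthogonality of $(\lambda,\nu)$ to the range of $[\nabla_y c(x,y)^{\top};\,I_p]$. Hence $(\lambda,\nu)$ is orthogonal to the sum appearing in Assumption \ref{Assumption_f}(4b), which is $\bb{R}^{m+p}$, forcing $(\lambda,\nu)=0$ and contradicting $\|\lambda\|^{2}+\|\nu\|^{2}=1$.

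For the uniform bound, I would proceed by contradiction: take $\{(x_k,y_k)\}\subset \tilde{\X}\times \tilde{\Y}$ with $c(x_k,y_k)\in \K$ and $\tilde{\tau}_{(x_k,y_k)}\to 0$, and let $(\lambda_k,\nu_k)$ attain each infimum. By compactness of $\tilde{\X}\times\tilde{\Y}$ and of the unit sphere, extract a subsequence with $(x_k,y_k,\lambda_k,\nu_k)\to (x^{*},y^{*},\lambda^{*},\nu^{*})$. Continuity of $c$ gives $c(x^{*},y^{*})\in \K$, continuity of $\nabla_y c$ together with $\|\nabla_y c(x_k,y_k)\lambda_k+\nu_k\|\to 0$ gives $\nabla_y c(x^{*},y^{*})\lambda^{*}+\nu^{*}=0$, the unit-norm condition passes to the limit, and outer semicontinuity of the normal-cone mapping of the closed convex set $\K$ yields $\lambda^{*}\in \ca{N}_{\K}(c(x^{*},y^{*}))$. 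Once $\nu^{*}\perp \mathrm{lin}(\ca{T}_{\Y}(y^{*}))$ is established, $(\lambda^{*},\nu^{*})$ is feasible for $\tilde{\tau}_{(x^{*},y^{*})}$ with objective value $0$, contradicting the pointwise positivity already proved.

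The main obstacle will be this last orthogonality, since $\mathrm{lin}(\ca{T}_{\Y}(y))$ is not continuous in $y$ in general. The required conclusion is equivalent to the inner semicontinuity of $y\mapsto \mathrm{lin}(\ca{T}_{\Y}(y))$ at $y^{*}$: for every $v\in \mathrm{lin}(\ca{T}_{\Y}(y^{*}))$ one can select $v_k\in \mathrm{lin}(\ca{T}_{\Y}(y_k))$ with $v_k\to v$, so that $\langle \nu_k, v_k\rangle = 0$ passes to $\langle \nu^{*},v\rangle=0$. For convex $\Y$, I would establish this via the face structure, using that $\mathrm{lin}(\ca{T}_{\Y}(y))$ coincides with the subspace parallel to the minimal face of $\Y$ containing $y$ in its relative interior, and that along $y_k\to y^{*}$ this minimal face at $y^{*}$ is contained in the corresponding minimal face at $y_k$ for $k$ large. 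An alternative approach is to use the finite-dimensional identity $\mathrm{lin}(\ca{T}_{\Y}(y))^{\perp}=\mathrm{span}(\ca{N}_{\Y}(y))$ together with outer semicontinuity of $\ca{N}_{\Y}$ and a bounded decomposition of $\nu_k$ into differences of normal vectors at $y_k$.
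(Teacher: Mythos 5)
Your argument for the pointwise claim is correct and is in fact a cleaner variant of the paper's. The paper introduces the auxiliary inner-ball radius $\tau_{(x,y)}$ of the set $\left[\begin{smallmatrix}\nabla_y c(x,y)\tp\\ I_p\end{smallmatrix}\right]\ca{B}_1+\mathrm{lin}(\cdot)$, proves $\tau_{(x,y)}>0$ by Hahn--Banach separation, and then shows $\tilde{\tau}_{(x,y)}\geq \tau_{(x,y)}$; you instead take the attained minimizer $(\lambda,\nu)$ directly and observe that $\nabla_y c(x,y)\lambda+\nu=0$ makes $(\lambda,\nu)$ orthogonal to the range of $\left[\begin{smallmatrix}\nabla_y c(x,y)\tp\\ I_p\end{smallmatrix}\right]$ while $\lambda\in\ca{N}_{\K}(c(x,y))$ and $\nu\perp\mathrm{lin}(\ca{T}_{\Y}(y))$ make it orthogonal to the lineality term, so Assumption \ref{Assumption_f}(4b) forces $(\lambda,\nu)=0$. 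This is the same duality, executed without the intermediate quantity, and it is sound (including the degenerate case where the constraint set is empty and $\tilde{\tau}_{(x,y)}=+\infty$).

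For the uniform bound you follow the same compactness-and-contradiction skeleton as the paper, and you have correctly isolated the one genuinely delicate step: passing $\nu_k\perp\mathrm{lin}(\ca{T}_{\Y}(y_k))$ to the limit. Note that the paper itself disposes of this step by invoking ``graph-closedness of $\ca{N}_{\Y}$,'' which would handle $\nu_k\in\ca{N}_{\Y}(y_k)$ but not the actual condition $\nu_k\in\mathrm{span}(\ca{N}_{\Y}(y_k))$; what is needed is outer semicontinuity of $y\mapsto\mathrm{span}(\ca{N}_{\Y}(y))$, equivalently inner semicontinuity of $y\mapsto\mathrm{lin}(\ca{T}_{\Y}(y))$, and this does not follow from outer semicontinuity of the normal cone map. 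So your diagnosis is accurate. However, neither of your proposed repairs closes the gap as stated. The face-theoretic route rests on the identity that $\mathrm{lin}(\ca{T}_{\Y}(y))$ is the subspace parallel to the minimal face containing $y$; this is true for polyhedral $\Y$ but false in general (for $\Y=\{(a,b):b\geq a^2\}$ at the origin the minimal face is the singleton $\{0\}$ while $\mathrm{lin}(\ca{T}_{\Y}(0))$ is the entire horizontal axis), and the claimed containment of the minimal face at $y^*$ in the minimal face at nearby $y_k$ also fails for that set. The decomposition route via $\mathrm{lin}(\ca{T}_{\Y}(y))^{\perp}=\mathrm{span}(\ca{N}_{\Y}(y))$ must control the coefficients of a representation $\nu_k=\sum_i c_{k,i}n_{k,i}$ with $n_{k,i}\in\ca{N}_{\Y}(y_k)$, and these coefficients can blow up precisely when $\ca{N}_{\Y}(y_k)$ is a thin cone whose span does not collapse in the limit, which is exactly the regime you need to rule out. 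In short: your part one improves on the paper, your part two reproduces the paper's structure and honestly flags, but does not resolve, the limit-passage step that the paper also leaves unjustified; for polyhedral $\Y$ both your first repair and the paper's assertion do go through.
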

\begin{proof}
    For any $(x, y) \in \X \times \Y$ that satisfies $c(x, y) \in \K$, we denote 
    \begin{equation}
        {\tau}_{(x, y)} := \max \left\{\tau\geq 0: \ca{B}_{\tau}  \subseteq  
        \left[
        \begin{smallmatrix}
            \nabla_y c(x, y)\tp\\
            I_p\\
        \end{smallmatrix}
        \right] \ca{B}_1 
        + 
        \mathrm{lin}\left(
        \left[
        \begin{smallmatrix}
            \ca{T}_{\K}(c(x, y))\\
            \ca{T}_{\Y}(y)\\
        \end{smallmatrix}
        \right]
        \right) \right\},
    \end{equation}
    where ${\cal B}_\tau\subset \mathbb{R}^{m+p}$ and ${\cal B}_1\subset\mathbb{R}^p$ denote the closed balls centered at the origin with radius $\tau$ and $1$, respectively. 

     We claim that under Assumption \ref{Assumption_f}(4), it holds that 
     ${\tau}_{(x, y)} > 0$, which we prove by contradiction next. 
     Suppose that ${\tau}_{(x, y)} = 0$. Denote $\ca{Z} = \left[
        \begin{smallmatrix}
            \nabla_y c(x, y)\tp\\
            I_p\\
        \end{smallmatrix}
        \right] \ca{B}_1 
        + 
        \mathrm{lin}\left(
        \left[
        \begin{smallmatrix}
            \ca{T}_{\K}(c(x, y))\\
            \ca{T}_{\Y}(y)\\
        \end{smallmatrix}
        \right]
        \right)$. 
        Then ${\tau}_{(x, y)} = 0$ implies that there exists $\tau >0$ and $(\tilde{x},\tilde{y}) \in B_\tau \setminus \ca{Z}.$
        By the Hahn-Banach theorem, there exist $w \in \bb{R}^{m+p} \setminus \{0\}$ such that $\inner{w, \ca{Z}} \leq 0$.    
        Since $\ca{Z} = - \ca{Z}$, so we can conclude that $\inner{w, \ca{Z}}=\inner{w, -\ca{Z}} =\{0\}$. 
    Then it holds that $\inner{w, \left[
        \begin{smallmatrix}
            \nabla_y c(x, y)\tp\\
            I_p\\
        \end{smallmatrix}
        \right] \Rp +  \mathrm{lin}\left(
        \left[
        \begin{smallmatrix}
            \ca{T}_{\K}(c(x, y))\\
            \ca{T}_{\Y}(y)\\
        \end{smallmatrix}
        \right]
        \right)} = \{0\}$, 
        which contradicts Assumption \ref{Assumption_f}(4). Thus we can conclude that ${\tau}_{(x, y)} > 0$. 
        
    
    Now, observe that for any $\lambda \in \ca{N}_{\K}(c(x, y))$ and any $\nu \perp \mathrm{lin}(\ca{T}_{\Y}(y))$ that satisfy $\norm{\lambda}^2 + \norm{\nu}^2 = 1$, it holds that  $(\lambda, \nu) \perp \mathrm{lin}\left(\left[
        \begin{smallmatrix}
            \ca{T}_{\K}(c(x, y))\\
            \ca{T}_{\Y}(y)\\
        \end{smallmatrix}
        \right] \right)$. 
    Then we have that 
    \begin{equation*}
        {\tau}_{(x, y)} \cdot(\lambda , \nu)
        \in
        \left[
        \begin{smallmatrix}
            \nabla_y c(x, y)\tp\\
            I_p\\
        \end{smallmatrix}
        \right] \ca{B}_1 .
    \end{equation*}
    Therefore, it holds that 
    \begin{equation*}
        \begin{aligned}
            &{\tau}_{(x, y)}  = \inner{(\lambda , \nu), \tilde{\tau}_{(x, y)} (\lambda , \nu)} \leq \sup_{\norm{u} = 1}\inner{u, \nabla_y c(x, y) \lambda + \nu} = \norm{\nabla_y c(x, y) \lambda + \nu}. 
        \end{aligned}
    \end{equation*}
    Thus we get $\tilde{\tau}_{(x,y)} \geq \tau_{(x,y)} > 0$.
    This completes the first part of the proof.

    Next we aim to prove the second part of this lemma by contradiction. For any compact subsets $\tilde{\X} \subseteq \X$, $\tilde{\Y} \subseteq \Y$, suppose there exists $\{(x_k, y_k)\} \subseteq \tilde{\X} \times \tilde{\Y}$ such that $c(x_k, y_k) \in \K$ and $\tilde{\tau}_{(x_k, y_k)} \to 0$. Then for any $k\geq 0$, there exists $\lambda_k \in \ca{N}_{\K}(c(x_k, y_k))$ and $\nu_k \perp \mathrm{lin}(\ca{T}_{\Y}(\yk))$ that satisfy $\norm{\lambda_k}^2 + \norm{\nu_k}^2 = 1$ and 
    \begin{equation}
        \norm{\nabla_y c(x_k, y_k) \lambda_k + \nu_k} < 2 \tilde{\tau}_{(x_k, y_k)}.
    \end{equation}
    Then from the graph-closedness of $\ca{N}_{\K}$ and $\ca{N}_{\Y}$, there exists $(x^*, y^*)$, $\lambda^* \in \ca{N}_{\K}(c(x^*, y^*))$, $\nu^* \perp \mathrm{lin}(\ca{T}_{\Y}(y^*))$, and a subsequence $\{i_k\}$ such that $(x_{i_k}, y_{i_k}) \to (x^*, y^*)$ and $(\lambda_{i_k}, \nu_{i_k}) \to (\lambda^*, \nu^*)$. Therefore, we have 
    \begin{equation}
        \norm{\nabla_y c(x^*, y^*) \lambda^* + \nu^*} = 0.
    \end{equation}
    This contradicts Assumption \ref{Assumption_f}(4).  This completes the proof of the lemma. 
\end{proof}

Next, we present the following lemma illustrating that $\lambda^*(x)$ is a singleton for any $x \in \X$. Based on the results in Lemma \ref{Le_unique_lambda_star}, in the rest of this paper, we treat $\lambda^*(\cdot)$ as a mapping from $\Rn$ to $\bb{R}^m$. 
\begin{lem}
    \label{Le_unique_lambda_star}
    Suppose Assumption \ref{Assumption_f} holds, then for any $x \in \X$, $\lambda^*(x)$ is a singleton.
\end{lem}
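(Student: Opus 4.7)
The plan is to first establish nonemptiness of $\lambda^*(x)$ using strong duality, and then derive uniqueness from the nondegeneracy condition in Assumption \ref{Assumption_f}(4b) via the KKT optimality conditions.

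For nonemptiness, fix $x \in \X$. The inner subproblem $\max_{y \in \Y,\, c(x, y) \in \K} \phi(x, y)$ is a strongly concave maximization problem with convex conic constraints by Assumption \ref{Assumption_f}(1,3,4a). The Slater-type condition in Assumption \ref{Assumption_f}(4c) together with \cite[Theorem 37.3]{rockafellar1970convex} yields strong duality and the existence of a saddle point. Hence $\lambda^*(x)$ is nonempty, and any $\lambda \in \lambda^*(x)$ paired with $y^*(x)$ forms a saddle point of $L(x, \cdot, \cdot)$ on $\K^{\circ} \times \Y$.

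For uniqueness, I would take any $\lambda_1, \lambda_2 \in \lambda^*(x)$ and exploit the KKT conditions coming from the saddle point property. Namely, $y^*(x) \in \argmax_{y \in \Y} L(x, \lambda_i, y)$ gives some $\nu_i \in \ca{N}_{\Y}(y^*(x))$ such that
\begin{equation*}
\nabla_y g(x, y^*(x)) = \nabla_y c(x, y^*(x))\lambda_i + \nu_i,
\end{equation*}
and complementary slackness gives $\lambda_i \in \ca{N}_{\K}(c(x, y^*(x)))$. Setting $w := \lambda_1 - \lambda_2$ and $v := \nu_1 - \nu_2$, I obtain $\nabla_y c(x, y^*(x)) w + v = 0$. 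Since $\Y$ and $\K$ are convex, their normal cones are polar to their tangent cones, so $w \in \mathrm{span}(\ca{N}_{\K}(c(x, y^*(x)))) = \mathrm{lin}(\ca{T}_{\K}(c(x, y^*(x))))^{\perp}$ and analogously $v \in \mathrm{lin}(\ca{T}_{\Y}(y^*(x)))^{\perp}$.

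The final step is to dualize Assumption \ref{Assumption_f}(4b): taking the orthogonal complement of the identity $\bb{R}^{m+p} = [\nabla_y c^\top;\, I_p]\Rp + \mathrm{lin}([\ca{T}_{\K};\, \ca{T}_{\Y}])$ and using $(A+B)^{\perp} = A^{\perp} \cap B^{\perp}$ in finite dimension yields that the only $(w, v) \in \mathrm{lin}(\ca{T}_{\K})^{\perp} \times \mathrm{lin}(\ca{T}_{\Y})^{\perp}$ with $\nabla_y c\, w + v = 0$ is $(0, 0)$. This forces $w = 0$, so $\lambda_1 = \lambda_2$. The main obstacle is the dualization step: correctly identifying $\mathrm{span}(\ca{N}_{\K}(c(x, y^*(x))))$ with $\mathrm{lin}(\ca{T}_{\K}(c(x, y^*(x))))^{\perp}$ (and similarly for $\Y$) so that the differences of multipliers satisfy the dual form of the nondegeneracy condition; this is essentially the same computation that underlies the inequality $\tilde{\tau}_{(x,y)} \geq \tau_{(x,y)} > 0$ established in the proof of Lemma \ref{Le_nondegeneracy}, so that lemma can be cited to shortcut the argument.
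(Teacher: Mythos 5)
Your proof is correct and follows essentially the same route as the paper: subtract the stationarity conditions of the inner KKT system for two multipliers $\lambda_1,\lambda_2$ and use the nondegeneracy of $\nabla_y c(x,y^*(x))$ on $\mathrm{span}(\ca{N}_{\K}) \times \mathrm{lin}(\ca{T}_{\Y})^{\perp}$ (i.e.\ Lemma \ref{Le_nondegeneracy}) to force $\lambda_1 = \lambda_2$. The only addition is your explicit nonemptiness step via Assumption \ref{Assumption_f}(4c) and strong duality, which the paper leaves implicit; that is harmless and, if anything, makes the claim that $\lambda^*(x)$ is a singleton (rather than merely at most a singleton) fully justified.
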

\begin{proof}
    Suppose $\lambda^*(x)$ is not singleton, then for any $x \in \X$,  from the optimality condition of the inner maximization subproblem of \eqref{Prob_Con_Minmax}, for any $\lambda_1, \lambda_2 \in \lambda^*(x)$, there exists $\nu_1, \nu_2 \in \ca{N}_{\Y}(y^*(x))$ such that 
    \begin{equation}
        0 = -\nabla_y g(x, y^*(x)) + \nabla_y c(x, y^*(x)) \lambda_i + \nu_i, \quad \forall i \in \{1,2\}.
    \end{equation}
    Therefore, we have $0 = \nabla_y c(x, y^*(x))(\lambda_1 - \lambda_2) + (\nu_1 - \nu_2)$. Notice that $(\nu_1 - \nu_2) \perp \mathrm{lin}(\ca{T}_{\Y}(y^*(x)))$, it directly follows from Lemma \ref{Le_nondegeneracy} that $\norm{\lambda_1 -\lambda_2} = 0$. This completes the proof. 
\end{proof}

In the rest of this subsection, we introduce the concept of level-boundedness \cite[Definition 1.16]{rockafellar2009variational} whose definition is given below. 
\begin{defin}
    A function $f: \X \times \Y \to \bb{R}$ is level-bounded in $y$ and locally uniformly in $x$, if for any $x \in \X$ and any $\alpha \in \bb{R}$, there exists a neighborhood $\ca{V}$ of $x$ such that the set $\{(x, y) \in \X \times \Y: x \in \ca{V}, f(x, y) \leq \alpha\}$ is bounded in $\X \times \Y$. 
\end{defin}

In the following lemma, we show the level-boundedness of $-L_{P}(x, \lambda, y)$ defined in \eqref{eq:LHG}. The result in Lemma \ref{Le_levelbounded_locallyuniformly} directly follows from the strong convexity of $-L_{P}$ with respect to $y$. Thus we omit its proof for simplicity. 
\begin{lem}
    \label{Le_levelbounded_locallyuniformly}
    Suppose Assumption \ref{Assumption_f} holds, then the function $-L_{P}(x, \lambda, y)$ in \eqref{eq:LHG}
    is level-bounded in $y$ and locally uniformly in $(x, \lambda)$.
\end{lem}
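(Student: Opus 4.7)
The plan is to leverage the $\mu$-strong convexity of $y \mapsto -L_{P}(x, \lambda, y)$ on $\Y$, uniform in $(x, \lambda)$, together with local continuity of the data, to obtain level-boundedness uniformly over a neighborhood of any $(x_0, \lambda_0) \in \X \times \K^{\circ}$.

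First I would observe that for every $(x, \lambda) \in \X \times \K^{\circ}$, the map $y \mapsto -L_{P}(x, \lambda, y) = -g(x, y) + \inner{\lambda, c(x, y)}$ is $\mu$-strongly convex on $\Y$. This combines the $\mu$-strong convexity of $-g(x, \cdot)$ from Assumption \ref{Assumption_f}(3) with the convexity of $y \mapsto \inner{\lambda, c(x, y)}$ granted by Assumption \ref{Assumption_f}(4a). Crucially, the modulus $\mu$ does not depend on $(x, \lambda)$.

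Next I would fix any $(x_0, \lambda_0) \in \X \times \K^{\circ}$ and an arbitrary reference point $y_0 \in \Y$. Applying Lemma \ref{Le_subgradient_inequality} with $\Omega = \Y$ to $-L_{P}(x', \lambda', \cdot)$, and noting that this function is smooth in $y$ so that $-\nabla_y g(x', y_0) + \nabla_y c(x', y_0)\lambda'$ lies in its partial Clarke subdifferential at $y_0$, I obtain, for all $y \in \Y$,
\begin{equation*}
-L_{P}(x', \lambda', y) \geq -L_{P}(x', \lambda', y_0) + \inner{-\nabla_y g(x', y_0) + \nabla_y c(x', y_0)\lambda',\, y - y_0} + \frac{\mu}{2}\norm{y - y_0}^2.
\end{equation*}
By the local Lipschitz smoothness of $g$ and $c$ (Assumption \ref{Assumption_f}(1) and (4)) together with the linear dependence on $\lambda$, I can select a bounded neighborhood $\V$ of $(x_0, \lambda_0)$ on which both $|L_{P}(x', \lambda', y_0)|$ and $\norm{-\nabla_y g(x', y_0) + \nabla_y c(x', y_0)\lambda'}$ are uniformly bounded, say by constants $M_1$ and $M_2$.

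Then for any $(x', \lambda') \in \V$ and $y \in \Y$ satisfying $-L_{P}(x', \lambda', y) \leq \alpha$, the inequality above yields
\begin{equation*}
\alpha \geq -M_1 - M_2 \norm{y - y_0} + \frac{\mu}{2}\norm{y - y_0}^2,
\end{equation*}
a quadratic inequality in $\norm{y - y_0}$ whose solution set is bounded by a constant depending only on $\alpha$, $M_1$, $M_2$, and $\mu$. This uniformly bounds $\norm{y}$ on the level set; combined with the boundedness of $\V$, the whole set $\{(x', \lambda', y) : (x', \lambda') \in \V,\, -L_{P}(x', \lambda', y) \leq \alpha\}$ is bounded, as required. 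The argument is essentially routine once the uniform strong-convexity modulus is noted; the only mild subtlety is extracting uniform bounds $M_1, M_2$ from local Lipschitz smoothness together with linearity in $\lambda$, which presents no real obstacle.
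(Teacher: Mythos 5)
Your argument is correct and follows exactly the route the paper indicates: the paper omits the proof, stating only that the result ``directly follows from the strong convexity of $-L_{P}$ with respect to $y$,'' and your write-up is precisely the standard fleshing-out of that claim (uniform modulus $\mu$ from Assumption \ref{Assumption_f}(3) and (4a), the subgradient inequality at a fixed $y_0$, and local boundedness of the value and partial gradient over a compact neighborhood of $(x_0,\lambda_0)$). No gaps.
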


\subsection{Equivalence between (MCC) and (MOL)}
In this subsection, we aim to show that \eqref{Prob_Con_Minmax} and \eqref{Prob_Pen_Minmax} have the same first-order, local, and global minimax points. Therefore, applying first-order descent-ascent methods to solve \eqref{Prob_Con_Minmax} through \eqref{Prob_Pen_Minmax} is guaranteed to find the first-order minimax points of \eqref{Prob_Con_Minmax}. 

We first present the following lemma illustrating the strong duality of the inner subproblem in \eqref{Eq_intro_reformulate}, which directly follows from  \cite[Theorem 37.3]{rockafellar1970convex} and \cite[Proposition 2.156 and page 105]{bonnans2013perturbation}. Hence the proof is omitted for simplicity. 
\begin{lem}
    \label{Le_strong_duality}
    Suppose Assumption \ref{Assumption_f} holds, then for any $x \in \X$ and any $\rho \in (0, \infty)$, it holds that 
    \begin{equation}
        \max_{y \in \Y} \min_{\lambda \in \K^{\circ} \cap \ca{B}_{\rho}}  ~L(x, \lambda, y) = \min_{\lambda \in \K^{\circ} \cap \ca{B}_{\rho}} \max_{y \in \Y} ~  L(x, \lambda, y), \quad \text{and} \quad G(x) = G_T(x). 
    \end{equation}
\end{lem}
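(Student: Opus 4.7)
The plan is to separate the two claims in the lemma and handle them by different classical tools: (a) the min-max swap over $\lambda \in \K^\circ \cap \ca{B}_\rho$ is a compact-set minimax theorem, while (b) the primal-dual equality $G(x) = G_T(x)$ is Lagrangian strong duality for the inner maximization subproblem under Slater's condition.

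For claim (a), the first step is to observe that $L(x,\lambda,y) = \phi(x,y) - \inner{\lambda, c(x,y)}$ is affine in $\lambda$ (hence simultaneously concave and convex), and $\mu$-strongly concave in $y$ by Assumption \ref{Assumption_f}(3) combined with the convexity of $y \mapsto \inner{\lambda, c(x,y)}$ from Assumption \ref{Assumption_f}(4a). The feasible set $\K^\circ \cap \ca{B}_\rho$ is compact and convex. Strong concavity in $y$, together with Lemma \ref{Le_levelbounded_locallyuniformly}, implies that $\max_{y \in \Y} L(x,\lambda,y)$ is attained on a bounded convex subset of $\Y$ uniformly for $\lambda \in \K^\circ \cap \ca{B}_\rho$. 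Restricting $\Y$ to this compact convex subset reduces the problem to the classical two-compact-set setting, so the swap follows directly from Rockafellar's Theorem 37.3 (or Sion's minimax theorem).

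For claim (b), I would interpret $G(x) = g(x,y^*(x))$ as the optimal value of the convex maximization program $\max_{y \in \Y,\, c(x,y) \in \K} g(x,y)$, whose convexity comes from strong concavity of $g(x,\cdot)$ (Assumption \ref{Assumption_f}(3)) and convexity of the feasible set induced by Assumption \ref{Assumption_f}(4a). Its Lagrangian dual is exactly $G_T(x) = \min_{\lambda \in \K^\circ} \max_{y \in \Y} L_P(x,\lambda,y)$. The Slater-type condition in Assumption \ref{Assumption_f}(4c) — existence of $y \in \ri(\Y)$ with $c(x,y) \in \ri(\K)$ — ensures zero duality gap by Bonnans-Shapiro Proposition 2.156, delivering $G(x) = G_T(x)$.

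The main subtlety, and the step I view as the key conceptual bridge, is that claim (a) is stated on the compact ball $\ca{B}_\rho$ while claim (b) ranges over the unbounded cone $\K^\circ$. This is not a true obstacle because Slater's condition (together with the strong concavity of $g$) ensures that the dual attains its optimum at the finite multiplier $\lambda^*(x)$ provided by Lemma \ref{Le_unique_lambda_star}. Consequently, for $\rho$ large enough the minimum over $\K^\circ \cap \ca{B}_\rho$ coincides with the minimum over all of $\K^\circ$, and one may pass to the limit $\rho \to \infty$ (or, equivalently, invoke the strong-duality theorem directly). The bounded version of the swap in (a) is precisely what makes this limit-exchange transparent, since a minimax swap on the noncompact $\K^\circ$ would not follow from a standard minimax theorem without additional compactness.
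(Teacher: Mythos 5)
Your proposal is correct and follows essentially the same route as the paper, which proves this lemma by citing exactly the two results you invoke: Rockafellar's Theorem~37.3 for the min--max interchange over the compact set $\K^{\circ} \cap \ca{B}_{\rho}$, and Bonnans--Shapiro Proposition~2.156 (under the Slater-type condition in Assumption~\ref{Assumption_f}(4c)) for the zero duality gap $G(x) = G_T(x)$. The additional detail you supply on reducing $\Y$ to a compact set via level-boundedness and on reconciling the bounded ball with the unbounded cone is a sound elaboration of what the paper leaves implicit.
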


The following lemma is a direct corollary of the result in \cite[Theorem 4.51, Proposition 4.47]{bonnans2013perturbation}. Hence we omit the proof for simplicity. 
\begin{lem}
    Suppose Assumption \ref{Assumption_f} holds, then the mapping $y^*$ is continuous over $\X$. 
\end{lem}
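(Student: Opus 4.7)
The plan is to prove continuity of $y^*$ by a Berge-type maximum theorem argument that exploits (i) the joint continuity of $\phi$, (ii) uniqueness of the maximizer from strong concavity (already established above), and (iii) continuity of the feasible set mapping $F(x) := \{y \in \Y : c(x,y) \in \K\}$, which is where Assumption \ref{Assumption_f}(4) does the real work.

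First, I would establish local boundedness of $y^*$. For any compact $\tilde{\X} \subseteq \X$, Assumption \ref{Assumption_f}(4c) supplies, for each $x \in \tilde{\X}$, a strictly feasible $\hat{y}(x)$ with $\hat{y}(x) \in \ri(\Y)$ and $c(x, \hat{y}(x)) \in \ri(\K)$. Combining continuity of $\phi$ on compact sets with the $\mu$-strong concavity of $y \mapsto \phi(x,y)$ yields, for any feasible $y$, the bound $\phi(x, \hat{y}(x)) \leq \phi(x,y)$ only when $y$ is close to the maximizer, so a standard strong-concavity estimate forces $\norm{y^*(x) - \hat{y}(x)}^2 \leq \tfrac{2}{\mu}(\phi(x, y^*(x)) - \phi(x, \hat{y}(x)))$. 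The right-hand side is bounded above on $\tilde{\X}$ because $\phi$ is continuous and $y^*$ is a maximizer, giving local boundedness.

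Next, fix $\bar{x} \in \X$ and take any sequence $x_k \to \bar{x}$ in $\X$. By the local boundedness just shown, $\{y^*(x_k)\}$ is bounded and hence admits a convergent subsequence $y^*(x_{k_j}) \to \tilde{y}$. By continuity of $c$ and closedness of $\Y$ and $\K$, $\tilde{y} \in F(\bar{x})$. It remains to show $\tilde{y} = y^*(\bar{x})$; by uniqueness of $y^*(\bar{x})$ (Assumption \ref{Assumption_f}(3) applied via strong concavity) this will pin down the full limit and establish continuity. For this I need inner semicontinuity of $F$ at $\bar{x}$: for every $y \in F(\bar{x})$, there exist $y_k \in F(x_k)$ with $y_k \to y$. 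This is exactly what Robinson's constraint qualification (Assumption \ref{Assumption_f}(4b)) guarantees, and is the conclusion of \cite[Theorem 4.51 and Proposition 4.47]{bonnans2013perturbation}. Granted this, the optimality of $y^*(x_{k_j})$ gives $\phi(x_{k_j}, y^*(x_{k_j})) \geq \phi(x_{k_j}, y_{k_j})$; passing to the limit using continuity of $\phi$ yields $\phi(\bar{x}, \tilde{y}) \geq \phi(\bar{x}, y)$ for every $y \in F(\bar{x})$, so $\tilde{y} \in y^*(\bar{x})$, and uniqueness gives $\tilde{y} = y^*(\bar{x})$.

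The main obstacle is the inner semicontinuity of $F$, since outer semicontinuity is essentially free from continuity of $c$ and closedness of $\Y, \K$, but inner semicontinuity requires the Robinson-type regularity of Assumption \ref{Assumption_f}(4b). Rather than reproving it, I would simply quote \cite[Theorem 4.51, Proposition 4.47]{bonnans2013perturbation} as the authors do; this cleanly packages the constraint-qualification machinery already recorded in Lemma \ref{Le_nondegeneracy} and is exactly the reason the paper imposes Assumption \ref{Assumption_f}(4b) in the first place.
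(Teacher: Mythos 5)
Your proposal is correct and takes essentially the same route as the paper: the paper gives no proof at all, stating only that the lemma ``directly follows from \cite[Theorem 4.51, Proposition 4.47]{bonnans2013perturbation},'' which is exactly the citation your Berge-type argument ultimately invokes. Your expanded sketch---uniqueness of the maximizer from strong concavity, outer semicontinuity of the feasible-set mapping from closedness of $\Y$ and $\K$, and inner semicontinuity from the Robinson-type condition in Assumption \ref{Assumption_f}(4b)---is a faithful unpacking of what that reference provides, so there is nothing to correct.
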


With the notation of $\tilde{\tau}_{(x, y)}$ as in Lemma \ref{Le_nondegeneracy} for any given $(x, y) \in \X \times \Y$, the following proposition characterizes the local boundedness of the mapping $\lambda^*(x)$ defined in \eqref{eq:G}.
\begin{prop}
    \label{Prop_bound_tau}
    Suppose Assumption \ref{Assumption_f} holds. Then for any $x \in \X$, we have 
    \begin{equation}
        c(x, y^*(x)) \in \K, \quad \inner{ \lambda^*(x), c(x, y^*(x)) } = 0, \quad \norm{\lambda^*(x)} \leq \frac{\norm{\nabla_y g(x, y^*(x))}}{\tilde{\tau}_{(x, y^*(x))}}. 
    \end{equation}
\end{prop}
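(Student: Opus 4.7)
The plan is to derive all three conclusions from the KKT optimality conditions of the inner maximization subproblem at $y^*(x)$, and then to exploit the nondegeneracy quantity $\tilde{\tau}_{(x, y^*(x))}$ from Lemma \ref{Le_nondegeneracy} to bound $\lambda^*(x)$. The first claim $c(x, y^*(x)) \in \K$ is immediate, since by definition $y^*(x)$ is the maximizer of $\phi(x, \cdot)$ over the feasible set $\{y \in \Y : c(x, y) \in \K\}$.

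For the second claim, I would invoke KKT at $y^*(x)$. Under Assumption \ref{Assumption_f}(4) -- concavity of $g(x, \cdot)$ together with the cone-convexity 4(a), the nondegeneracy 4(b), and the Slater-type regularity 4(c) -- the inner cone-constrained concave maximization enjoys a standard constraint qualification, so KKT holds at $y^*(x)$. This yields the existence of $\lambda^*(x) \in \K^\circ$ satisfying the complementary slackness $\inner{\lambda^*(x), c(x, y^*(x))} = 0$, which is precisely the second claim, together with the stationarity relation
\[
    \nabla_y g(x, y^*(x)) - \nabla_y c(x, y^*(x))\, \lambda^*(x) \;\in\; \ca{N}_\Y(y^*(x)).
\]
Uniqueness of this multiplier is furnished by Lemma \ref{Le_unique_lambda_star}, so it coincides with $\lambda^*(x)$ as given by \eqref{eq:G}.

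For the third claim, set $\nu := \nabla_y g(x, y^*(x)) - \nabla_y c(x, y^*(x))\lambda^*(x)\in \ca{N}_\Y(y^*(x))$. Complementary slackness together with $\lambda^*(x) \in \K^\circ$ and $c(x, y^*(x)) \in \K$ gives $\lambda^*(x) \in \ca{N}_\K(c(x, y^*(x)))$. Moreover, since $\Y$ is convex, $\ca{N}_\Y(y^*(x)) = \ca{T}_\Y(y^*(x))^\circ$, and the polar of a convex cone annihilates its lineality space, so $\nu \perp \mathrm{lin}(\ca{T}_\Y(y^*(x)))$. If both $\lambda^*(x) = 0$ and $\nu = 0$, the stated bound is trivial. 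Otherwise set $s := \sqrt{\norm{\lambda^*(x)}^2 + \norm{\nu}^2} > 0$; then $(\lambda^*(x)/s,\, \nu/s)$ is admissible in the infimum that defines $\tilde{\tau}_{(x, y^*(x))}$, and therefore
\[
    \tilde{\tau}_{(x, y^*(x))} \;\le\; \norm{\nabla_y c(x, y^*(x))\,\lambda^*(x)/s + \nu/s} \;=\; \norm{\nabla_y g(x, y^*(x))}/s.
\]
Rearranging and using $\norm{\lambda^*(x)} \le s$ yields the desired bound.

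The main obstacle is pinning down the correct constraint qualification inside Assumption \ref{Assumption_f}(4) so that KKT can be invoked at $y^*(x)$: the Robinson-type nondegeneracy 4(b) and the Slater-type condition 4(c) work together for cone-constrained convex programs to guarantee existence of the multiplier, while Lemma \ref{Le_nondegeneracy} guarantees $\tilde{\tau}_{(x, y^*(x))} > 0$ so that the final bound is meaningful. Once KKT is granted, the remainder is essentially a one-line scaling argument against the infimum that defines $\tilde{\tau}$.
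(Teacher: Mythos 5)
Your proof is correct and follows essentially the same route as the paper: both extract the KKT system of the inner cone-constrained maximization at $y^*(x)$ (feasibility, complementary slackness, and stationarity with a normal-cone element $\nu$), and both bound $\norm{\lambda^*(x)}$ by normalizing the pair $(\lambda^*(x),\nu)$ against the infimum defining $\tilde{\tau}_{(x,y^*(x))}$ and using $\nabla_y c(x,y^*(x))\lambda^*(x)+\nu=\nabla_y g(x,y^*(x))$. Your write-up is in fact slightly more careful than the paper's, since you explicitly verify that $\lambda^*(x)\in\ca{N}_{\K}(c(x,y^*(x)))$ and $\nu\perp\mathrm{lin}(\ca{T}_{\Y}(y^*(x)))$, which is needed for admissibility in that infimum.
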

\begin{proof}
    For any $x \in \X$, from the optimality condition of the inner maximization subproblem in \eqref{Prob_Con_Minmax}, there exists $\nu^* \in \ca{N}_{\Y}(y^*(x))$ such that, 
    \begin{equation}
        \label{Eq_Prop_bound_tau_0}
        \begin{aligned}
            & c(x, y^*(x)) \in \K, \quad \inner{\lambda^*(x) , c(x, y^*(x)) } = 0,\\
            & - \nabla_y g(x, y^*(x))  + \nu^* + \nabla_y c(x, y^*(x)) \lambda^*(x)=0. 
        \end{aligned}
    \end{equation}
    Then Lemma \ref{Le_nondegeneracy} illustrates that
    \begin{equation}
        \begin{aligned}
            &\tilde{\tau}_{(x, y^*(x))}^2 \norm{\lambda^*(x)}^2 \leq \tilde{\tau}_{(x, y^*(x))}^2 (\norm{\lambda^*(x)}^2 + \norm{\nu^*}^2) \\
            \leq{}&  \norm{ \nu^* + \nabla_y c(x, y^*(x)) \lambda^*(x)}^2 = \norm{ \nabla_y g(x, y^*(x)) }^2. 
        \end{aligned}
    \end{equation}
    This completes the proof. 
\end{proof}

Moreover, for any $\rho > 0$, we consider the following optimization problem:
\begin{equation} \label{Eq_minimize_HT_restrict}
    G_{T, \rho}(x) := \min_{\lambda \in \K^{\circ} \cap \ca{B}_\rho} H_T(x, \lambda). 
\end{equation}
Then we have the following proposition illustrating the equivalence between $G(x)$ and $G_{T, \rho}(x)$ for sufficiently large $\rho$.
\begin{prop}
    \label{Prop_GT_local_equivalent}
    Suppose Assumption \ref{Assumption_f} holds. Then for any $x \in \X$, there exists $\rho_x > 0$ and $\delta_x > 0$ such that for all $\tilde{x} \in \X \cap \ca{B}_{\delta_x}(x)$, the optimization problem \eqref{Eq_minimize_HT_restrict} admits $\lambda^*(\tilde{x})$ as its global minimizer. Moreover,  $G_{T, \rho_x}(\tilde{x}) = G(\tilde{x})$ holds for all $\tilde{x} \in \X \cap \ca{B}_{\delta_x}(x)$. 
\end{prop}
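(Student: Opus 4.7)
The plan is to reduce the proposition to a uniform boundedness estimate on $\lambda^*(\tilde{x})$ for $\tilde{x}$ in a small neighborhood of $x$, and then combine it with the saddle-point structure established in Lemma \ref{Le_strong_duality}. First I would pick an arbitrary $\delta_x>0$ and set $\tilde{\X}:=\X\cap\ca{B}_{\delta_x}(x)$; since $\X$ is closed, $\tilde{\X}$ is compact. The continuity of $y^*$ on $\X$ (stated just before Proposition \ref{Prop_bound_tau}) then implies that $\tilde{\Y}:=y^*(\tilde{\X})$ is a compact subset of $\Y$, and by Proposition \ref{Prop_bound_tau} we have $c(\tilde{x},y^*(\tilde{x}))\in\K$ for every $\tilde{x}\in\tilde{\X}$. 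Lemma \ref{Le_nondegeneracy} therefore yields a uniform positive lower bound $\tau_0:=\inf_{\tilde{x}\in\tilde{\X}}\tilde{\tau}_{(\tilde{x},y^*(\tilde{x}))}>0$.

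Next, using the local Lipschitz smoothness of $g$ from Assumption \ref{Assumption_f}(1) and the compactness of $\tilde{\X}\times\tilde{\Y}$, the quantity $M:=\sup_{\tilde{x}\in\tilde{\X}}\norm{\nabla_y g(\tilde{x},y^*(\tilde{x}))}$ is finite. Plugging both $\tau_0$ and $M$ into the bound from Proposition \ref{Prop_bound_tau} yields $\norm{\lambda^*(\tilde{x})}\leq M/\tau_0$ for every $\tilde{x}\in\tilde{\X}$. I would then choose any $\rho_x>M/\tau_0$, so that $\lambda^*(\tilde{x})\in\K^{\circ}\cap\ca{B}_{\rho_x}$ holds uniformly in $\tilde{x}\in\tilde{\X}$.

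It remains to verify that $\lambda^*(\tilde{x})$ is in fact a global minimizer of $H_T(\tilde{x},\cdot)$ over the entire cone $\K^{\circ}$, with value $G(\tilde{x})$. The KKT conditions for the constrained inner maximization in \eqref{Prob_Con_Minmax} at $y^*(\tilde{x})$, already written out in the proof of Proposition \ref{Prop_bound_tau}, coincide with the first-order optimality conditions for the unconstrained problem $\max_{y\in\Y} L_P(\tilde{x},\lambda^*(\tilde{x}),y)$. Since Assumption \ref{Assumption_f}(3)--(4a) makes $L_P(\tilde{x},\lambda^*(\tilde{x}),\cdot)$ strongly concave on $\Y$, its maximizer is unique and must equal $y^*(\tilde{x})$; together with the complementary slackness $\inner{\lambda^*(\tilde{x}),c(\tilde{x},y^*(\tilde{x}))}=0$ (Proposition \ref{Prop_bound_tau}) this gives $H_T(\tilde{x},\lambda^*(\tilde{x}))=g(\tilde{x},y^*(\tilde{x}))=G(\tilde{x})$. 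Combining with $G(\tilde{x})=G_T(\tilde{x})=\min_{\lambda\in\K^{\circ}}H_T(\tilde{x},\lambda)$ from Lemma \ref{Le_strong_duality} shows that $\lambda^*(\tilde{x})$ is a minimizer of $H_T(\tilde{x},\cdot)$ over all of $\K^{\circ}$; since it also lies in $\ca{B}_{\rho_x}$, it is a global minimizer of the restricted problem \eqref{Eq_minimize_HT_restrict} and $G_{T,\rho_x}(\tilde{x})=G(\tilde{x})$.

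The main obstacle is the uniform control of the Lagrange multiplier: both the constant $\tau_0$ from the nondegeneracy condition and the gradient bound $M$ must be taken over a compact domain, which requires $\tilde{\Y}=y^*(\tilde{\X})$ to be compact in $\Y$. The continuity of $y^*$ delivers this without extra work, and the rest of the argument is a bookkeeping step linking primal KKT to the dual function $H_T$ through strong duality.
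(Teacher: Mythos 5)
Your proof is correct, and the first half mirrors the paper's: both obtain a uniform bound on $\norm{\lambda^*(\tilde{x})}$ over a compact neighborhood from Proposition \ref{Prop_bound_tau} together with the uniform positivity of $\tilde{\tau}$ from Lemma \ref{Le_nondegeneracy}, and then take $\rho_x$ strictly larger than that bound (you are somewhat more explicit about why the sup is finite, invoking the continuity of $y^*$ and compactness of $y^*(\tilde{\X})$ to control both the numerator and the denominator; the paper compresses this into the assertion that $U_\lambda$ is locally bounded). The second half is where you genuinely diverge. The paper argues by contradiction: assuming a second global minimizer $\lambda_1 \neq \lambda^*(\tilde{x})$ of the restricted problem, it produces a convex combination $\lambda_2$ interior to the ball, identifies $\lambda_2$ as a Lagrange multiplier of the inner maximization, and invokes the uniqueness of the multiplier (Lemma \ref{Le_unique_lambda_star}) to reach a contradiction; this yields \emph{uniqueness} of the minimizer, a fact the paper relies on afterwards when treating $\lambda^*_T$ as single-valued, but it tacitly presupposes that $\lambda^*(\tilde{x})$ is a minimizer of the restricted problem in the first place. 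You instead verify membership directly: the KKT system at $y^*(\tilde{x})$ is the optimality condition for the strongly concave problem $\max_{y\in\Y} L_P(\tilde{x},\lambda^*(\tilde{x}),y)$, so complementary slackness gives $H_T(\tilde{x},\lambda^*(\tilde{x})) = G(\tilde{x})$, and Lemma \ref{Le_strong_duality} identifies this value with $\min_{\lambda\in\K^\circ} H_T(\tilde{x},\lambda)$, whence $\lambda^*(\tilde{x})$ minimizes $H_T(\tilde{x},\cdot)$ over the whole cone and a fortiori over the ball, and $G_{T,\rho_x}(\tilde{x})=G(\tilde{x})$ follows without any uniqueness discussion. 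Your route proves exactly the stated claim and supplies the step the paper leaves implicit, at the cost of not establishing the uniqueness that the paper's version delivers as a by-product (though that would follow by combining your identification of dual minimizers with multipliers with Lemma \ref{Le_unique_lambda_star}).
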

\begin{proof}
    For any $x \in \X$, for the mapping 
    $\lambda^*(x)$ in \eqref{eq:G}, we define the an upper-bound for $\norm{\lambda^*(x)}$ as $U_{\lambda}(x) = \frac{2\norm{\nabla_y g(x, y^*(x)) }}{\tilde{\tau}_{(x, y^*(x))}}$ from Proposition \ref{Prop_bound_tau}. Then from Lemma \ref{Le_nondegeneracy}, we can conclude that the function $U_{\lambda}(x)$ is locally bounded over $\X$. Therefore, for any $x \in \X$ and any $\delta_x > 0$, let $\rho_x > \sup_{\tilde{x} \in \X \cap \ca{B}_{\delta_x}(x)} U_{\lambda(\tilde{x})}$, it holds that 
    \begin{equation}
        \sup_{\tilde{x} \in \X \cap \ca{B}_{\delta_x}(x)} \norm{\lambda^*(\tilde{x})} < \rho_x. 
    \end{equation}

    Now, for the optimization problem \eqref{Eq_minimize_HT_restrict}, we claim that it admits $\lambda^*(\tilde{x})$ as its unique global minimizer. Otherwise, suppose there exists another global minimizer $\lambda_1 \neq \lambda^*(\tilde{x})$. From the fact that $\norm{\lambda^*(\tilde{x})} < \rho_x$, we can conclude that there exists $\lambda_2 \neq \lambda^*(\tilde{x})$, which is a convex combination of $\lambda_1$ and $\lambda^*(x)$, and satisfies  $\norm{\lambda_2} < \rho_x$. Therefore, from the optimality condition in \eqref{Eq_Prop_bound_tau_0}, we can conclude that $\lambda_2$ is a multiplier for the inner maximization subproblem of \eqref{Prob_Con_Minmax}. Then from the uniqueness of the multiplier in Lemma \ref{Le_unique_lambda_star}, we can conclude that $\lambda_2 = \lambda^*(\tilde{x})$, which leads to the contradiction. Therefore, we can conclude that \eqref{Eq_minimize_HT_restrict} admits $\lambda^*(\tilde{x})$ as its unique global minimizer. This completes the first part of the proof. 

    Furthermore, from the uniqueness of $\lambda^*(\tilde{x})$ for any fixed $\tilde{x}$, it holds that 
    \begin{equation}
        G_{T, \rho_x}(\tilde{x}) = \inf_{\lambda \in \K^{\circ} \cap \ca{B}_{\rho_x}} H_T(\tilde{x}, \lambda) = \inf_{\lambda \in \K^{\circ} } H_T(\tilde{x}, \lambda) = G(\tilde{x}). 
    \end{equation}
    This completes the proof. 
\end{proof}

Next, based on the results of differentiating value functions \cite{rockafellar2009variational,mordukhovich2009subgradients}, the following lemma illustrates the differentiability of $H_T$ and the formulation of its gradients. 
\begin{lem}
    \label{Le_gradient_HT}
    Suppose Assumption \ref{Assumption_f} holds. Then the function $H_T$ is differentiable and
    \begin{equation}
        \begin{aligned}
            \nabla_x H_T(x, \lambda) ={}& \nabla_x  g(x, y^*_T(x, \lambda)) - \nabla_x c(x, y^*_T(x, \lambda))\lambda,\\
            \nabla_{\lambda} H_T(x, \lambda) ={}& -c(x, y^*_T(x, \lambda)).
        \end{aligned}
    \end{equation}
\end{lem}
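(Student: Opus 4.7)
The plan is to derive the result as an instance of the Danskin-type envelope theorem for parametric optimization with a strongly concave inner problem. Under Assumption \ref{Assumption_f}(3), the objective $L_P(x,\lambda,\cdot)$ is $\mu$-strongly concave on $\Y$ for every $(x,\lambda) \in \X \times \K^\circ$, so the maximum in the definition $H_T(x,\lambda) = \max_{y \in \Y} L_P(x,\lambda,y)$ is attained at a unique point $y^*_T(x,\lambda)$; existence follows from the level-boundedness established in Lemma \ref{Le_levelbounded_locallyuniformly}. Hence $y^*_T$ is a well-defined single-valued mapping, which is the only ingredient beyond standard smoothness needed for the envelope argument.

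First I would verify that $y^*_T$ is continuous on $\X \times \K^\circ$. Given $(x_k,\lambda_k) \to (x,\lambda)$, Lemma \ref{Le_levelbounded_locallyuniformly} supplies a neighborhood of $(x,\lambda)$ on which $\{y^*_T(x_k,\lambda_k)\}$ stays in a bounded set, so cluster points exist. By the joint continuity of $L_P$, any cluster point is a maximizer of $L_P(x,\lambda,\cdot)$ on $\Y$; by uniqueness it must equal $y^*_T(x,\lambda)$. Therefore the whole sequence converges to $y^*_T(x,\lambda)$.

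Next I would apply the envelope theorem. The mapping $(x,\lambda) \mapsto L_P(x,\lambda,\cdot)$ is continuously differentiable in its first two arguments (uniformly on compact sets in $y$), with
\begin{equation*}
\nabla_x L_P(x,\lambda,y) = \nabla_x g(x,y) - \nabla_x c(x,y)\lambda, \qquad \nabla_\lambda L_P(x,\lambda,y) = -c(x,y).
\end{equation*}
The standard two-sided sandwich argument of Danskin's theorem then yields the Fréchet differentiability of $H_T$: the upper estimate
\begin{equation*}
H_T(x+h,\lambda) - H_T(x,\lambda) \geq L_P(x+h,\lambda,y^*_T(x,\lambda)) - L_P(x,\lambda,y^*_T(x,\lambda))
\end{equation*}
and the lower estimate
\begin{equation*}
H_T(x+h,\lambda) - H_T(x,\lambda) \leq L_P(x+h,\lambda,y^*_T(x+h,\lambda)) - L_P(x,\lambda,y^*_T(x+h,\lambda)),
\end{equation*}
after dividing by $\norm{h}$ and using the continuity of $y^*_T$ together with the local uniform smoothness of $\nabla_x L_P$ in $y$, both converge to $\inner{\nabla_x g(x,y^*_T(x,\lambda)) - \nabla_x c(x,y^*_T(x,\lambda))\lambda, h/\norm{h}}$. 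The identical argument with $\lambda$ in place of $x$ yields $\nabla_\lambda H_T(x,\lambda) = -c(x,y^*_T(x,\lambda))$.

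The main obstacle is ensuring that the sandwich argument produces no normal-cone contribution from the constraint $y \in \Y$. This is clean here because $\Y$ does not depend on the parameter $(x,\lambda)$: the point $y^*_T(x,\lambda)$ is feasible for the inner problem at $(x+h,\lambda)$ and vice versa, so both inequalities above are immediate without any variation of the feasible set and without invoking Lemma \ref{Le_nondegeneracy}. Alternatively, one may short-circuit the calculation by invoking the marginal function results in \cite{rockafellar2009variational,mordukhovich2009subgradients}, whose hypotheses (joint continuous differentiability of $L_P$ in the parameters, uniqueness of the inner maximizer, and continuity of the argmap) have already been established.
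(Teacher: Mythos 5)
Your proposal is correct and follows essentially the same route as the paper: the paper's proof simply notes the level-boundedness from Lemma \ref{Le_levelbounded_locallyuniformly}, writes $-H_T(x,\lambda)=\min_{y\in\Y}-L_P(x,\lambda,y)$, and invokes the marginal-function differentiability result \cite[Theorem 10.58]{rockafellar2009variational} — precisely the shortcut you mention at the end — while your version merely unpacks that citation into the explicit Danskin sandwich argument (uniqueness from strong concavity, continuity of $y^*_T$, two-sided bounds with a parameter-independent feasible set). No gap; the extra detail is a faithful expansion of the cited theorem rather than a different proof.
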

\begin{proof}
    As illustrated in Lemma \ref{Le_levelbounded_locallyuniformly},  $-L_{P}$ is level-bounded in $y$ and locally uniformly 
    in $(x,\lambda)$.
    Moreover, for any $(x, \lambda) \in \X \times \K^{\circ}$, it holds that 
    \begin{equation*}
        -H_T(x, \lambda) = \min_{y \in \Y} - L_{P}(x, \lambda, y). 
    \end{equation*}
    Then it directly follows from \cite[Theorem 10.58]{rockafellar2009variational} that the function $-H_T$ is differentiable at $(x, \lambda) \in \X \times \K^{\circ}$, and 
    \begin{equation}
       \begin{aligned}
            \nabla_x H_T(x, \lambda) ={}& \nabla_x L_{P}(x, \lambda, y^*_T(x, \lambda)) =  \nabla_x  g(x, y^*_T(x, \lambda)) - \nabla_x c(x, y^*_T(x, \lambda))\lambda,\\
            \nabla_{\lambda} H_T(x, \lambda) ={}&  \nabla_{\lambda} L_{P}(x, \lambda, y^*_T(x, \lambda)) = -c(x, y^*_T(x, \lambda)).
        \end{aligned}
    \end{equation}
    This completes the proof. 
\end{proof}

Moreover, Proposition \ref{Prop_GT_local_equivalent} also illustrates that for any $x \in \X$, $\lambda^*_T(x)$ is a singleton and $\lambda^*_T(x) = \lambda^*(x)$. Therefore, in the rest of this paper, we treat $\lambda^*_T$ as a mapping from $\Rn$ to $\Rp$. Then based on Proposition \ref{Prop_GT_local_equivalent} and Lemma \ref{Le_gradient_HT}, we have the following proposition on the differentiability of $G(x)$. 
\begin{prop}
    \label{Prop_gradient_G}
    Suppose Assumption \ref{Assumption_f} holds. Then it holds that 
    \begin{equation}
        \nabla G(x) = \nabla_x H_T(x, \lambda_T^*(x)). 
    \end{equation}
\end{prop}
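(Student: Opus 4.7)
The plan is to combine the local equivalence $G \equiv G_{T,\rho_x}$ established in Proposition \ref{Prop_GT_local_equivalent} with a parametric-minimum differentiation theorem, so that the gradient of $G$ can be read off directly from that of $H_T$ furnished by Lemma \ref{Le_gradient_HT}.

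First I would fix $x \in \X$ and apply Proposition \ref{Prop_GT_local_equivalent} to produce constants $\rho_x > 0$ and $\delta_x > 0$ such that $G(\tilde x) = G_{T,\rho_x}(\tilde x) = \min_{\lambda \in \K^{\circ}\cap \ca{B}_{\rho_x}} H_T(\tilde x, \lambda)$ for every $\tilde x \in \X \cap \ca{B}_{\delta_x}(x)$, with the minimum attained uniquely at $\lambda^*(\tilde x) = \lambda^*_T(\tilde x)$. It therefore suffices to show that the restricted value function $G_{T,\rho_x}$ is differentiable at $x$ with $\nabla G_{T,\rho_x}(x) = \nabla_x H_T(x,\lambda^*_T(x))$; the claimed formula then follows from the local identity $G = G_{T,\rho_x}$.

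Next, I would invoke a parametric-minimum differentiation theorem such as \cite[Theorem 10.58]{rockafellar2009variational}, applied to $\min_{\lambda \in \K^{\circ} \cap \ca{B}_{\rho_x}} H_T(\tilde x, \lambda)$. The required hypotheses all hold: $H_T$ is continuously differentiable in both arguments by Lemma \ref{Le_gradient_HT}; compactness of the feasible set $\K^{\circ} \cap \ca{B}_{\rho_x}$ renders $H_T(\tilde x, \cdot)$ trivially level-bounded in $\lambda$ locally uniformly in $\tilde x$; and the argmin is the singleton $\{\lambda^*_T(\tilde x)\}$ on the whole neighborhood. The theorem then delivers $\nabla G_{T,\rho_x}(x) = \nabla_x H_T(x, \lambda^*_T(x))$, which, combined with the local identification of $G$ and $G_{T,\rho_x}$, yields the claim.

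The main subtlety I anticipate concerns passing from the ball-truncated problem to the original problem over $\K^{\circ}$: the value function theorem is cleanest for compact feasible sets, so the ball $\ca{B}_{\rho_x}$ is used purely as a bounding device. Because Proposition \ref{Prop_GT_local_equivalent} (together with the upper bound from Proposition \ref{Prop_bound_tau}) ensures that $\norm{\lambda^*(\tilde x)}$ stays strictly below $\rho_x$ on a neighborhood of $x$, the ball constraint is inactive at the minimizer, which is precisely why no extra boundary multiplier appears in the gradient formula. Continuity of the map $\lambda^*_T$, needed to get a continuous gradient, comes for free from uniqueness of the argmin combined with compactness of the feasible set, and is part of the conclusion of the parametric-minimum theorem itself.
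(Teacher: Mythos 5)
Your proposal is correct and follows essentially the same route as the paper: localize via Proposition \ref{Prop_GT_local_equivalent} to the ball-truncated problem $G_{T,\rho_x}$, apply the parametric-minimum differentiation theorem \cite[Theorem 10.58]{rockafellar2009variational} using compactness of $\K^{\circ}\cap\ca{B}_{\rho_x}$ and uniqueness of $\lambda^*_T(\tilde x)$, and then transfer the gradient formula to $G$ through the local identity $G = G_{T,\rho_x}$. Your added remark that the ball constraint is inactive at the minimizer (so no boundary multiplier appears) is a useful clarification that the paper leaves implicit.
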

\begin{proof}
    For any $x \in \X$ and any $\delta_x > 0$, Proposition \ref{Prop_GT_local_equivalent} illustrates that there exists $\rho_x > 0$ such that $G_T(\tilde{x}) = G_{T, \rho_x}(\tilde{x})$ for any $\tilde{x} \in \X \cap \ca{B}_{\rho_x}(x)$. 
    
    Then from the compactness of $\X  \cap \ca{B}_{\rho_x}$ and \cite[Theorem 10.58]{rockafellar2009variational}, we can conclude that $G_{T, \rho_x}$ is differentiable for any $\tilde{x} \in \X \cap \ca{B}_{\rho_x}(x)$, and 
    \begin{equation}
        \label{Eq_Prop_gradient_G_0}
        \nabla G_{T, \rho_x}(\tilde{x}) =  \nabla_x H_T(\tilde{x}, \lambda^*_T(\tilde{x})).
    \end{equation}
    Moreover, from Lemma \ref{Le_strong_duality}, we have that $G(\tilde{x}) = G_{T, \rho_x}(\tilde{x}) = G_T(\tilde{x})$ holds for any $\tilde{x} \in \X \cap \ca{B}_{\rho_x}(x)$. Together with \eqref{Eq_Prop_gradient_G_0},  we can conclude that 
    \begin{equation}
        \nabla G(x) = \nabla_x H_T(x, \lambda^*_T(x)). 
    \end{equation}
    This completes the proof.

\end{proof}

The following proposition illustrates the equivalence between \eqref{Prob_Con_Minmax} and \eqref{Prob_Pen_Minmax} with respect to both local minimax points and global minimax points. The results in proposition \ref{Prop_Equivalence_MCC_local} directly follows from the fact that $\Phi(x) = \inf_{\lambda \in\K^{\circ}} H_T(x, \lambda) + r_1(x)$, hence is omitted for simplicity. 
\begin{prop}
    \label{Prop_Equivalence_MCC_local}
    Suppose Assumption \ref{Assumption_f} holds. Then for any $(\tilde{x},\tilde{y}) \in \X \times \Y$ that is a global minimax point of \eqref{Prob_Con_Minmax}, there exists $\tilde{\lambda} \in \K^{\circ}$ such that $(\tilde{x}, \tilde{\lambda}, \tilde{y})$ is a global minimax point of \eqref{Prob_Pen_Minmax}. Moreover, for any $(\tilde{x},\tilde{y}) \in \X \times \Y$ that is a local minimax point of \eqref{Prob_Con_Minmax}, there exists $\tilde{\lambda} \in \K^{\circ}$ such that $(\tilde{x}, \tilde{\lambda}, \tilde{y})$ is a local minimax point of \eqref{Prob_Pen_Minmax}.
\end{prop}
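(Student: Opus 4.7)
The plan is to leverage the identity $\Phi(x) = G_T(x) + r_1(x) = \inf_{\lambda \in \K^\circ} H_T(x, \lambda) + r_1(x)$, which combines the definition of $\Phi$ with Lemma \ref{Le_strong_duality}. The natural candidate multiplier is $\tilde{\lambda} := \lambda^*(\tilde{x})$, which is single-valued by Lemma \ref{Le_unique_lambda_star}. With this choice, I intend to verify three facts: (i) $\tilde{y} = y_T^*(\tilde{x}, \tilde{\lambda})$; (ii) $H_T(\tilde{x}, \tilde{\lambda}) + r_1(\tilde{x}) = \Phi(\tilde{x})$; and (iii) $(\tilde{x}, \tilde{\lambda})$ minimizes $H_T(x, \lambda) + r_1(x)$ globally (respectively locally) over $\X \times \K^\circ$.

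For (i), the KKT optimality for the inner maximization subproblem of (MCC) at $\tilde{y} = y^*(\tilde{x})$, established in \eqref{Eq_Prop_bound_tau_0} via Proposition \ref{Prop_bound_tau}, states that $-\nabla_y g(\tilde{x}, \tilde{y}) + \nabla_y c(\tilde{x}, \tilde{y})\tilde{\lambda} \in -\ca{N}_\Y(\tilde{y})$; this is exactly the first-order optimality condition characterizing the unique maximizer of the strongly concave problem $\max_{y \in \Y} L_P(\tilde{x}, \tilde{\lambda}, y)$, so $\tilde{y} = y_T^*(\tilde{x}, \tilde{\lambda})$. For (ii), complementary slackness $\langle \tilde{\lambda}, c(\tilde{x}, \tilde{y})\rangle = 0$ from Proposition \ref{Prop_bound_tau} gives $H_T(\tilde{x}, \tilde{\lambda}) = L_P(\tilde{x}, \tilde{\lambda}, \tilde{y}) = g(\tilde{x}, \tilde{y}) = G(\tilde{x})$, whence $H_T(\tilde{x}, \tilde{\lambda}) + r_1(\tilde{x}) = \Phi(\tilde{x})$.

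For the global part of (iii), any $(x, \lambda) \in \X \times \K^\circ$ satisfies
\begin{equation*}
H_T(x, \lambda) + r_1(x) \;\geq\; G_T(x) + r_1(x) \;=\; G(x) + r_1(x) \;=\; \Phi(x) \;\geq\; \Phi(\tilde{x}) \;=\; H_T(\tilde{x}, \tilde{\lambda}) + r_1(\tilde{x}),
\end{equation*}
using Lemma \ref{Le_strong_duality} in the second equality and the global minimality of $\tilde{x}$ for $\Phi$ in the last inequality. For the local part, if $\tilde{x}$ is a minimizer of $\Phi$ on $\X \cap \ca{B}_\delta(\tilde{x})$, then the same chain of inequalities holds for every $(x, \lambda) \in (\X \cap \ca{B}_\delta(\tilde{x})) \times \K^\circ$; in particular it holds on the product neighborhood $(\X \times \K^\circ) \cap \ca{B}_\delta(\tilde{x}, \tilde{\lambda})$, which certifies local minimality of $(\tilde{x}, \tilde{\lambda})$ for $H_T + r_1$.

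The argument is almost purely bookkeeping once the right $\tilde{\lambda}$ is chosen, so there is no serious obstacle; the most delicate point is step (i), converting the KKT multiplier for the constrained inner problem into the unconstrained maximizer of $L_P$, but this is immediate from strong concavity together with the complementary slackness already recorded in Proposition \ref{Prop_bound_tau}. Importantly, the local argument does not require shrinking the $\lambda$-neighborhood because the inequality $H_T(x, \lambda) + r_1(x) \geq \Phi(x)$ is valid for every $\lambda \in \K^\circ$, so the same $\delta$ from the (MCC) local minimax definition transports directly.
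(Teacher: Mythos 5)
Your argument is correct and is precisely the route the paper intends: the paper omits the proof, stating only that the result ``directly follows from the fact that $\Phi(x) = \inf_{\lambda \in \K^{\circ}} H_T(x,\lambda) + r_1(x)$,'' and your proposal is exactly that identity fleshed out, with $\tilde{\lambda} = \lambda^*(\tilde{x})$, complementary slackness giving $H_T(\tilde{x},\tilde{\lambda}) + r_1(\tilde{x}) = \Phi(\tilde{x})$, and the chain $H_T(x,\lambda) + r_1(x) \geq \Phi(x) \geq \Phi(\tilde{x})$ handling both the global and local cases. The only cosmetic remark is that you sensibly read the minimax-point definition for \eqref{Prob_Pen_Minmax} with the objective $H_T(x,\lambda) + r_1(x)$ (the true value of $\max_{y}L$), which is what the paper's definition evidently intends.
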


Finally, we present the following theorem illustrating that \eqref{Prob_Con_Minmax} and \eqref{Prob_Pen_Minmax} have the same first-order minimax points. 
\begin{theo}
    \label{The_Equivalence_MCC_FOSP}
    Suppose Assumption \ref{Assumption_f} holds. Then for any $(\tilde{x},\tilde{y}) \in \X \times \Y$ that is a first-order minimax point of \eqref{Prob_Con_Minmax}, there exists $\tilde{\lambda} \in \K^{\circ}$ such that $(\tilde{x}, \tilde{\lambda}, \tilde{y})$ is a first-order minimax point of \eqref{Prob_Pen_Minmax}. 
    
    Moreover, for any $(\tilde{x}, \tilde{\lambda}, \tilde{y}) \in \X \times \K^{\circ} \times \Y$ that is a first-order minimax point of \eqref{Prob_Pen_Minmax}, it holds that $(\tilde{x}, \tilde{y})$ is a first-order minimax point of \eqref{Prob_Con_Minmax}. 
\end{theo}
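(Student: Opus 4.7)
The plan is to establish the two implications separately, using Proposition 3.7 (and Lemma 3.6) as the bridge: it tells us that $G$ is continuously differentiable with $\nabla G(x) = \nabla_x g(x, y^*(x)) - \nabla_x c(x, y^*(x))\lambda^*(x)$, so by the Clarke sum rule $\partial \Phi(\tilde x) = \nabla G(\tilde x) + \partial r_1(\tilde x)$. A second tool I will use repeatedly is the polar-cone identity $c \in \mathcal{N}_{\K^\circ}(\lambda) \iff c \in \K,\ \lambda \in \K^\circ,\ \inner{\lambda, c} = 0$, which converts the third condition in the (MOL) first-order system into primal feasibility plus complementary slackness for the inner subproblem of (MCC).

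For the forward direction, given a first-order minimax point $(\tilde x, \tilde y)$ of (MCC), I set $\tilde \lambda := \lambda^*(\tilde x)$, which is well defined and unique by Lemma \ref{Le_unique_lambda_star}. The $x$-condition of (MOL) follows by plugging the formula from Proposition \ref{Prop_gradient_G} into $0 \in \partial \Phi(\tilde x) + \ca{N}_\X(\tilde x)$. The $y$-condition is simply the KKT stationarity for the inner maximization subproblem at $y^*(\tilde x)$ with multiplier $\lambda^*(\tilde x)$, which already appears as \eqref{Eq_Prop_bound_tau_0}. The $\lambda$-condition follows from Proposition \ref{Prop_bound_tau}, which gives $c(\tilde x, \tilde y) \in \K$, $\tilde \lambda \in \K^\circ$, and $\inner{\tilde \lambda, c(\tilde x, \tilde y)} = 0$; together with the polar-cone identity above this is exactly $0 \in -c(\tilde x, \tilde y) + \ca{N}_{\K^\circ}(\tilde \lambda)$.

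For the reverse direction, I start from a first-order minimax point $(\tilde x, \tilde \lambda, \tilde y)$ of (MOL). The third (MOL) condition, unpacked via the polar-cone identity, yields $c(\tilde x, \tilde y) \in \K$ and $\inner{\tilde\lambda, c(\tilde x, \tilde y)} = 0$; combined with $\tilde y \in \Y$, $\tilde \lambda \in \K^\circ$, and the second (MOL) condition, this is precisely the full KKT system for the inner concave maximization subproblem in (MCC). Since $y \mapsto \phi(\tilde x, y)$ is strongly concave and the constraints are convex under Assumption \ref{Assumption_f}, this KKT system is sufficient for global optimality, so $\tilde y = y^*(\tilde x)$, and then by Lemma \ref{Le_unique_lambda_star} the multiplier is unique so $\tilde \lambda = \lambda^*(\tilde x)$. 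Substituting into the first (MOL) condition and invoking Proposition \ref{Prop_gradient_G} converts it back into $0 \in \nabla G(\tilde x) + \partial r_1(\tilde x) + \ca{N}_\X(\tilde x) = \partial \Phi(\tilde x) + \ca{N}_\X(\tilde x)$, so $(\tilde x, \tilde y)$ is a first-order minimax point of (MCC).

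I expect the main obstacle to be conceptual rather than computational: one must make sure that the Clarke subdifferential of $\Phi$ is faithfully captured by $\nabla G + \partial r_1$ (so that no generalized gradients are lost when splitting $\Phi = G + r_1$), which rests entirely on the $C^1$-smoothness of $G$ established in Proposition \ref{Prop_gradient_G}; and one must carefully verify that the nondegeneracy condition in Assumption \ref{Assumption_f}(4b) is what guarantees the Lagrange multiplier is unique and bounded, so that the correspondence $\tilde \lambda = \lambda^*(\tilde x)$ in the reverse direction is unambiguous. Once these are in place the rest is bookkeeping between the KKT system of the inner problem and the three-block stationarity system of (MOL).
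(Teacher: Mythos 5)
Your proposal is correct and follows essentially the same route as the paper's proof: both directions hinge on Proposition \ref{Prop_gradient_G} and Lemma \ref{Le_gradient_HT} to identify $\partial\Phi(\tilde{x}) = \nabla_x H_T(\tilde{x},\lambda^*_T(\tilde{x})) + \partial r_1(\tilde{x})$, with the multiplier pinned down via Lemma \ref{Le_unique_lambda_star}. If anything, your reverse direction is slightly more careful than the paper's, which infers $\tilde{y}=y^*(\tilde{x})$ from the second (MOL) condition alone, whereas you correctly combine it with the third condition (via the polar-cone identity) to recover the full KKT system and then invoke its sufficiency under concavity.
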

\begin{proof}
    For any $(\tilde{x},\tilde{y}) \in \X \times \Y$ that is a first-order minimax point of \eqref{Prob_Con_Minmax}, we have
    \begin{equation}
        0\in \partial \Phi(\tilde{x}) +  \ca{N}_{\X}(\tilde{x}) = \nabla G(\tilde{x}) + \partial r_1(\tilde{x}) + \ca{N}_{\X}(\tilde{x}), \quad \tilde{y} = y^*(\tilde{x}). 
    \end{equation}
    Firstly, from the definition of $y^*_T$ and $\lambda^*_T$, it holds that $y^*(x) = y^*_T(x, \lambda^*_T(x))$ for any $x \in \X$. Therefore, we can conclude that $\tilde{y} = y^*_T(\tilde{x}, \lambda^*_T(\tilde{x}))$. 
    
    Then from Proposition \ref{Prop_gradient_G}, and Lemma \ref{Le_gradient_HT}, we have
    \begin{equation}
        \begin{aligned}
            &0 \in \partial \Phi(\tilde{x}) +  \ca{N}_{\X}(\tilde{x})= \nabla G(\tilde{x}) + \partial r_1(\tilde{x}) + \ca{N}_{\X}(\tilde{x}) =  \nabla_x H_T(\tilde{x}, \lambda^*_T(\tilde{x})) + \partial r_1(\tilde{x}) + \ca{N}_{\X}(\tilde{x})\\
            ={}& \nabla_x  g(\tilde{x}, y^*_T(\tilde{x}, \lambda^*_T(\tilde{x}))) - \nabla_x c(\tilde{x}, y^*_T(\tilde{x} , \lambda^*_T(\tilde{x})))\lambda^*_T(\tilde{x}) + \partial r_1(\tilde{x}) + \ca{N}_{\X}(\tilde{x})\\
            ={}& \nabla_x  g(\tilde{x}, \tilde{y}) - \nabla_x c(\tilde{x}, \tilde{y})\lambda^*_T(\tilde{x}) + \partial r_1(\tilde{x}) + \ca{N}_{\X}(\tilde{x}). 
        \end{aligned}
    \end{equation}
    Moreover, from the optimality condition for the inner maximization subproblem in \eqref{Prob_Con_Minmax} and the definition of $\lambda^*_T$, it holds that 
    \begin{equation}
        0 \in -\nabla_y g(\tilde{x}, \tilde{y}) + \nabla_y c(\tilde{x}, \tilde{y})\lambda^*_T(\tilde{x}) + \ca{N}_{\Y}(\tilde{y}). 
    \end{equation}
    As a result, there exists $\tilde{\lambda} =\lambda^*_T(\tilde{x}) \in \K^{\circ}$ such that 
    \begin{equation}
        \left\{
        \begin{aligned}
            &0 \in \nabla_x  g(\tilde{x}, \tilde{y}) - \nabla_x c(\tilde{x}, \tilde{y})\tilde{\lambda} + \partial r_1(\tilde{x}) + \ca{N}_{\X}(\tilde{x}),\\
            &0 \in -\nabla_y g(\tilde{x}, \tilde{y}) + \nabla_y c(\tilde{x}, \tilde{y})\tilde{\lambda} + \ca{N}_{\Y}(\tilde{y}),\\
            &0 \in -c(\tilde{x}, \tilde{y}) + \ca{N}_{\K^{\circ}}(\tilde{\lambda}),
        \end{aligned}
        \right.
    \end{equation}
    where the third condition holds because $c(\tilde{x},\tilde{y})\in \K$.
    Therefore, we can conclude that $(\tilde{x}, \tilde{\lambda}, \tilde{y})$ is a first-order minimax point of \eqref{Prob_Pen_Minmax}. 

    On the other hand, for any $(\tilde{x}, \tilde{\lambda}, \tilde{y}) \in \X \times \K^{\circ} \times \Y$ that is a first-order minimax point of \eqref{Prob_Pen_Minmax}, it holds that 
    \begin{equation}
        \left\{
        \begin{aligned}
            &0 \in \nabla_x  g(\tilde{x}, \tilde{y}) - \nabla_x c(\tilde{x}, \tilde{y})\tilde{\lambda} + \partial r_1(\tilde{x}) + \ca{N}_{\X}(\tilde{x}),\\
            &0 \in -\nabla_y g(\tilde{x}, \tilde{y}) + \nabla_y c(\tilde{x}, \tilde{y})\tilde{\lambda} + \ca{N}_{\Y}(\tilde{y}),\\
            &0 \in -c(\tilde{x}, \tilde{y}) + \ca{N}_{\K^{\circ}}(\tilde{\lambda}).
        \end{aligned}
        \right.
    \end{equation}
    Then from the second condition above, we can conclude that $\tilde{y} = y^*(\tilde{x})=y_T^*(\tilde{x},\lambda_T^*(\tilde{x}))$ and $\tilde{\lambda} = \lambda^*_T(\tilde{x})$. Moreover, from Proposition \ref{Prop_gradient_G}, and Lemma \ref{Le_gradient_HT}, it holds that 
    \begin{equation}
        \begin{aligned}
            &\partial \Phi(\tilde{x}) +  \ca{N}_{\X}(\tilde{x})= \nabla G(\tilde{x}) + \partial r_1(\tilde{x}) + \ca{N}_{\X}(\tilde{x}) =  \nabla_x H_T(\tilde{x}, \lambda^*_T(\tilde{x})) + \partial r_1(\tilde{x}) + \ca{N}_{\X}(\tilde{x})\\
            ={}& \nabla_x  g(\tilde{x}, y^*_T(\tilde{x}, \lambda^*_T(\tilde{x}))) - \nabla_x c(\tilde{x}, y^*_T(\tilde{x}, \lambda^*_T(\tilde{x})))\lambda^*_T(\tilde{x}) + \partial r_1(\tilde{x}) + \ca{N}_{\X}(\tilde{x})\\
            ={}&  \nabla_x  g(\tilde{x}, \tilde{y}) - \nabla_x c(\tilde{x}, \tilde{y})\lambda^*_T(\tilde{x}) + \partial r_1(\tilde{x}) + \ca{N}_{\X}(\tilde{x})\ni{}0. 
        \end{aligned}
    \end{equation}
    Therefore, $(\tilde{x}, \tilde{y})$ is a first-order minimax point of \eqref{Prob_Con_Minmax}. This completes the proof. 
\end{proof}

\section{Minimization Formulation of (MM) through (PFBE)}
In this section, we present the equivalence between the minimax problem \eqref{Prob_Minmax} and the minimization problem \eqref{Prob_Pen}. As the minimax problem \eqref{Prob_Minmax} generalizes the minimax 
problem \eqref{Prob_Pen_Minmax}, all the theoretical results in this section can be directly applied to \eqref{Prob_Pen_Minmax}, hence contributing to the 
computation of the minimax problem with coupled constraints \eqref{Prob_Con_Minmax}.

For the partial forward-backward envelope (PFBE), we define 
\begin{equation}\label{eq:PGdef}
    \left\{
    \begin{aligned}
        & T_{y, \eta}(x, y) := \argmax_{ v\in \Y} ~f(x, y) + \inner{\nabla_y f(x, y), v-y}  - r_2(v) - \frac{1}{2\eta} \norm{v-y}^2,\\
        & R_{y, \eta}(x, y) := \frac{1}{\eta}\left(  T_{y, \eta}(x, y) - y \right).
    \end{aligned}
    \right.
\end{equation}

Based on Assumption \ref{Assumption_f} and the assumptions from \cite{jin2020local,hassan2018non,lin2020gradient,lu2020hybrid,luo2022finding,xu2023unified}, we make the following standard assumptions on \eqref{Prob_Minmax}. 
\begin{assumpt}
    \label{Assumption_f_Minmax}
    \begin{enumerate}
        \item $f$ is continuously differentiable over $\Rn \times \Rp$, and  $\nabla f$ is $L_f$-Lipschitz continuous over $\Rn \times \Rp$. Moreover, $\nabla^2_{yy} f(x, y)$ exists for any $(x, y) \in \X \times \Y$.  
        \item For any $x \in \X$, the function $y \mapsto  f(x, y)$ is $\mu$-strongly concave, where $\mu>0$ is independent of $x.$ 
        \item The functions $r_1: \Rn \to \bb{R}$ is locally Lipschitz continuous, and $r_2: \Rp \to \bb{R}$ is locally Lipschitz continuous and convex. 
        \item For any $(x, y) \in \X \times \Y$, the proximal subproblem in \eqref{eq:PGdef} can be efficiently computed. 
    \end{enumerate}
\end{assumpt}
Assumption \ref{Assumption_f_Minmax}(1)-(3) are standard assumptions in various existing works \cite{jin2020local,hassan2018non,lin2020gradient,lu2020hybrid,luo2022finding,xu2023unified,cohen2024alternating}. Moreover, Assumption \ref{Assumption_f_Minmax}(4), which also appears in \cite{cohen2024alternating}, assumes  the regularization term $r_2$ is prox-friendly. It is worth mentioning that based on the results in \cite{yu2013decomposing}, the proximal subproblem in \eqref{eq:PGdef} can be efficiently computed for a wide range of regularization terms $r_2$ and constraint sets $\Y$.

\subsection{Basic properties}
In this subsection, we present the basic properties of the PFBE of the minimax problem \eqref{Prob_Minmax}. We first present some fundamental properties of the PFBE in the following lemma. 
\begin{lem}
    Suppose Assumption \ref{Assumption_f_Minmax} holds. Then given any $(x, y) \in \X \times \Y$ and $\eta>0$, the following inequalities hold, 
    \begin{itemize}
        \item $\Psi_{\eta}(x, y) \geq f(x, y) - r_2(y) + \frac{\eta}{2} \norm{R_{y, \eta}(x, y)}^2$;
        \item $f(x, T_{y, \eta}(x, y)) - r_2(T_{y, \eta}(x, y)) \geq \Psi_{\eta}(x, y) + \frac{\eta(1-\eta L_f)}{2} \norm{R_{y, \eta}(x, y)}^2$. 
    \end{itemize}
\end{lem}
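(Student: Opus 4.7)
The plan is to treat the two inequalities separately, since they rely on complementary structural properties of the PFBE: the first on the strong concavity of the inner objective in $v$, and the second on the Lipschitz smoothness of $\nabla_y f$.

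For the first inequality, I would observe that, under Assumption \ref{Assumption_f_Minmax}, the inner objective
\[
    v \mapsto f(x,y) + \inner{\nabla_y f(x,y), v-y} - r_2(v) - \frac{1}{2\eta}\norm{v-y}^2
\]
is $(1/\eta)$-strongly concave in $v$ over the convex set $\Y$, because $r_2$ is convex and the quadratic term is $(1/\eta)$-strongly concave (the linear-in-$v$ pieces are affine). Hence $T_{y,\eta}(x,y)$ is the unique maximizer, and by the standard quadratic growth bound for strongly concave maximization (i.e., Lemma \ref{Le_subgradient_inequality} applied to the negative of the inner objective), for every $v \in \Y$,
\[
    \Psi_{\eta}(x,y) \;\geq\; \Bigl[f(x,y) + \inner{\nabla_y f(x,y), v-y} - r_2(v) - \tfrac{1}{2\eta}\norm{v-y}^2\Bigr] + \tfrac{1}{2\eta}\norm{v - T_{y,\eta}(x,y)}^2.
\]
Plugging in $v = y$ makes the linear and quadratic terms in $v-y$ vanish, leaving $\Psi_{\eta}(x,y) \geq f(x,y) - r_2(y) + \tfrac{1}{2\eta}\norm{y - T_{y,\eta}(x,y)}^2$; rewriting the last term using $\norm{T_{y,\eta}(x,y)-y}^2 = \eta^2 \norm{R_{y,\eta}(x,y)}^2$ yields the claim.

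For the second inequality, I would expand the definition of $\Psi_{\eta}(x,y)$ at its maximizer $T := T_{y,\eta}(x,y)$ and then apply the descent lemma (the lower-bound form) to $f(x, \cdot)$, which is valid because $\nabla f$ is $L_f$-Lipschitz on $\Rn \times \Rp$:
\[
    f(x, T) \;\geq\; f(x,y) + \inner{\nabla_y f(x,y), T - y} - \tfrac{L_f}{2}\norm{T-y}^2.
\]
Subtracting $r_2(T)$ from both sides and comparing with the explicit expression
\[
\Psi_{\eta}(x,y) = f(x,y) + \inner{\nabla_y f(x,y), T-y} - r_2(T) - \tfrac{1}{2\eta}\norm{T-y}^2
\]
gives $f(x,T) - r_2(T) \geq \Psi_{\eta}(x,y) + \tfrac{1-\eta L_f}{2\eta}\norm{T-y}^2$; converting $\tfrac{1}{\eta}\norm{T-y}^2$ into $\eta \norm{R_{y,\eta}(x,y)}^2$ recovers the stated bound.

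Neither step poses a substantive obstacle; the only thing to be careful about is the use of the partial Lipschitz constant $L_f$ for $\nabla_y f(x,\cdot)$, which is inherited from the global Lipschitz constant of $\nabla f$ in Assumption \ref{Assumption_f_Minmax}(1), and the direct identification of the $(1/\eta)$-strong concavity modulus of the proximal inner problem. Both inequalities are essentially the classical forward-backward envelope sandwich specialized to the partial envelope in the $y$-variable, so the main task is to ensure the bookkeeping between $\norm{T-y}$ and $\eta\norm{R_{y,\eta}}$ is done consistently.
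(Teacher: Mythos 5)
Your proof is correct and follows essentially the same route as the paper: the second inequality is the identical descent-lemma computation, and the first inequality differs only cosmetically — you invoke the $(1/\eta)$-strong concavity of the full inner objective and quadratic growth at its maximizer, whereas the paper applies the subgradient inequality to $r_2$ alone with the specific subgradient $\nabla_y f(x,y) - R_{y,\eta}(x,y)$ supplied by the optimality condition; both yield exactly the bound $\Psi_\eta(x,y) \geq f(x,y) - r_2(y) + \frac{1}{2\eta}\norm{T_{y,\eta}(x,y)-y}^2$. The bookkeeping between $\norm{T_{y,\eta}(x,y)-y}$ and $\eta\norm{R_{y,\eta}(x,y)}$ is handled consistently, so no gaps remain.
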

\begin{proof}
    From the definition of $\Psi_{\eta}$ in \eqref{Eq_FBE_Y}, for any $(x, y) \in \X \times \Y$, it holds that,  
    \begin{equation}
        \begin{aligned}
            0 \in{}& \frac{1}{\eta}(T_{y, \eta}(x, y) - y) - \nabla_y f(x, y) + \partial r_2 (T_{y, \eta}(x, y)) + \ca{N}_{\Y}(T_{y, \eta}(x, y))\\
            ={}& R_{y, \eta}(x) - \nabla_y f(x, y) + \partial r_2 (T_{y, \eta}(x, y)) + \ca{N}_{\Y}(T_{y, \eta}(x, y)).
        \end{aligned}
    \end{equation}
    Then from Lemma \ref{Le_subgradient_inequality} and the convexity of $r_2$ and $\Y$, we have the following inequality,
    \begin{equation}
        r_2(y) \geq r_2(T_{y, \eta}(x, y)) + \inner{d, y - T_{y, \eta}(x, y)}, \quad \forall d \in \partial r_2(T_{y, \eta}(x, y)) + \ca{N}_{\Y}(T_{y, \eta}(x, y)).  
    \end{equation}
    As a result, it holds that 
    \begin{equation}
        \begin{aligned}
            &r_2(y) \geq r_2(T_{y, \eta}(x, y)) - \inner{  \nabla_y f(x, y) - R_{y, \eta}(x),  T_{y, \eta}(x, y) - y} \\
            ={}& r_2(T_{y, \eta}(x, y)) - \inner{\nabla_y f(x, y), T_{y, \eta}(x, y) - y} + \frac{1}{\eta} \norm{T_{y, \eta}(x, y) - y}^2, 
        \end{aligned}
    \end{equation}
    which leads to the following inequality,  
    \begin{equation}
        \begin{aligned}
            &f(x, y) - r_2(y) \leq  f(x, y) - r_2(T_{y, \eta}(x, y)) + \inner{\nabla_y f(x, y), T_{y, \eta}(x, y) - y} - \frac{1}{\eta} \norm{T_{y, \eta}(x, y) - y}^2\\
            ={}& \Psi_{\eta}(x, y) - \frac{1}{2\eta} \norm{T_{y, \eta}(x, y) - y}^2 =  \Psi_{\eta}(x, y) - \frac{\eta}{2} \norm{R_{y, \eta}(x, y)}^2. 
        \end{aligned}
    \end{equation}
    This completes the first part of the proof. 

    Furthermore, from the $L_f$-Lipschitz continuity of $\nabla_y f$ in Assumption \ref{Assumption_f_Minmax}(1), it holds that 
    \begin{equation}
        \begin{aligned}
            &f(x, T_{y, \eta}(x, y)) - r_2(T_{y, \eta}(x, y)) \\
            \geq{}& f(x, y) - r_2(T_{y, \eta}(x, y)) + \inner{\nabla_y f(x, y), T_{y, \eta}(x, y) - y} - \frac{L_f}{2} \norm{T_{y, \eta}(x, y) - y}^2\\
            ={}& f(x, y) - r_2(T_{y, \eta}(x, y)) + \inner{\nabla_y f(x, y), T_{y, \eta}(x, y) - y} \\
            & - \frac{1}{2\eta}\norm{T_{y, \eta}(x, y) - y}^2 + \frac{1 - \eta L_f}{2\eta}\norm{T_{y, \eta}(x, y) - y}^2\\
            ={}& \Psi_{\eta}(x, y) +  \frac{\eta(1 - \eta L_f)}{2}\norm{R_{y, \eta}(x, y)}^2. 
        \end{aligned}
    \end{equation}
    This completes the proof. 
\end{proof}

The following lemma directly follows from the strong concavity of $f$ with respect to $y$ in Assumption \ref{Assumption_f_Minmax}(2). As a result, we omit its proof for simplicity. 
\begin{lem}
    Suppose Assumption \ref{Assumption_f_Minmax} holds. Then for any $y \in \Y$ and any $d \in \ca{T}_{\Y}(y)$, it holds that 
    \begin{equation}
        \inner{d, \nabla_{yy}^2 f(x, y) d} \leq -\mu \norm{d}^2. 
    \end{equation}
\end{lem}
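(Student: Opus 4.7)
The plan is to combine a second-order Taylor expansion of $f(x,\cdot)$ at $y$ with the strong concavity inequality, evaluated along a sequence of feasible points converging to $y$ in the direction $d$ that is guaranteed by the definition of the Clarke tangent cone.

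First, I would invoke the definition of the Clarke tangent cone at $y \in \Y$: for any $d \in \ca{T}_{\Y}(y)$, there exist sequences $t_k \downarrow 0$ and $d_k \to d$ such that $y + t_k d_k \in \Y$ for every $k$. Since $\nabla_y f$ is Lipschitz continuous and $\nabla^2_{yy} f(x, y)$ exists by Assumption \ref{Assumption_f_Minmax}(1), the second-order expansion at $y$ is valid:
\begin{equation*}
    f(x, y + t_k d_k) = f(x, y) + t_k \inner{\nabla_y f(x, y), d_k} + \frac{t_k^2}{2} \inner{d_k, \nabla^2_{yy} f(x, y) d_k} + o(t_k^2 \norm{d_k}^2).
\end{equation*}

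Next, since $y + t_k d_k \in \Y$ and $f(x,\cdot)$ is smooth and $\mu$-strongly concave over $\Y$, Assumption \ref{Assumption_f_Minmax}(2) together with the fact that $\partial_y f(x, y) = \{\nabla_y f(x, y)\}$ yields
\begin{equation*}
    f(x, y + t_k d_k) \leq f(x, y) + t_k \inner{\nabla_y f(x, y), d_k} - \frac{\mu t_k^2}{2} \norm{d_k}^2.
\end{equation*}
Subtracting the first-order terms, dividing by $t_k^2/2$, and rearranging gives
\begin{equation*}
    \inner{d_k, \nabla^2_{yy} f(x, y) d_k} + \frac{2 \, o(t_k^2 \norm{d_k}^2)}{t_k^2} \leq -\mu \norm{d_k}^2.
\end{equation*}
Since $\{d_k\}$ is bounded and $t_k \downarrow 0$, the error term vanishes as $k \to \infty$; passing to the limit using the continuity of the quadratic form $v \mapsto \inner{v, \nabla^2_{yy} f(x, y) v}$ yields the conclusion $\inner{d, \nabla^2_{yy} f(x, y) d} \leq -\mu \norm{d}^2$.

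The main (mild) technical point will be justifying the clean limit on the Taylor remainder: since $d_k$ is bounded, $o(t_k^2 \norm{d_k}^2) = o(t_k^2)$, so after dividing by $t_k^2$ the error goes to zero. Beyond this, everything reduces to a routine application of the definitions, so there is no serious obstacle.
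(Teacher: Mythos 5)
Your proof is correct: the paper omits the argument entirely, remarking only that the lemma ``directly follows from the strong concavity of $f$ with respect to $y$,'' and your combination of the sequential characterization of the tangent cone, the second-order Peano expansion (valid since $\nabla_y f(x,\cdot)$ is differentiable at $y$ by Assumption 4.2(1)), and the strong-concavity inequality is exactly the elaboration the authors have in mind. The only trivial point worth noting is that for $d=0$ the inequality is immediate, so you may assume $\norm{d_k}$ bounded away from zero when absorbing the remainder, as you indicate.
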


In the following, we define the auxiliary function $\Xi_{(\eta, \alpha)}: \X \times \Y \to \bb{R}$ as 
\begin{equation}
    \label{Eq_defin_Xi}
    \Xi_{(\eta, \alpha)}(x, y) := \alpha \Psi_{\eta}(x, y) - (\alpha-1) f(x, y),
\end{equation}
which corresponds to the differentiable part of $\Gamma_{(\eta, \alpha)}$. 
Then we have the following proposition illustrating the gradients of $\Psi_{\eta}$ and $\Xi_{(\eta, \alpha)}$. As Proposition \ref{Prop_fundmental_differential} directly follows from \cite[Theorem 10.58]{rockafellar2009variational}, we omit its proof for simplicity. 
\begin{prop}
    \label{Prop_fundmental_differential}
    For any $(x, y) \in \Rn \times \Y$ and $\eta > 0$, it holds that 
    \begin{equation}
        \left\{
        \begin{aligned}
            \nabla_x \Psi_{\eta}(x, y)={}& \nabla_x f(x,y) + \eta \nabla_{xy}^2 f(x, y)R_{y, \eta}(x, y), \\
            \nabla_{y} \Psi_{\eta}(x, y)={}& \left( I_p + \eta \nabla_{yy}^2 f(x, y)  \right)R_{y, \eta}(x, y).
        \end{aligned}
        \right.
    \end{equation}
    Moreover, we have 
    \begin{equation}
        \left\{
        \begin{aligned}
            \nabla_x \Xi_{(\eta, \alpha)}(x, y)={}& \nabla_x f(x,y) + \alpha \eta \nabla_{xy}^2 f(x, y)R_{y, \eta}(x, y), \\
            \nabla_y \Xi_{(\eta, \alpha)}(x, y)={}& \left( I_p + \alpha \eta \nabla_{yy}^2 f(x, y)  \right)R_{y, \eta}(x, y) + (1-\alpha) (\nabla_y f(x, y) - R_{y, \eta}(x, y)).
        \end{aligned}
        \right.
    \end{equation}
\end{prop}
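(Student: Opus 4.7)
The plan is to treat $\Psi_{\eta}$ as a value function of a parametric maximization problem and apply a Danskin/envelope-type theorem (e.g., \cite[Theorem 10.58]{rockafellar2009variational} as suggested), then evaluate the resulting partial derivatives and rewrite the answer in terms of the residual mapping $R_{y,\eta}$. The claim for $\Xi_{(\eta,\alpha)}$ will follow immediately from the definition \eqref{Eq_defin_Xi} and linearity of differentiation.

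The first step is to introduce the joint function
\begin{equation*}
    \psi_{\eta}(x,y;v) := f(x,y) + \inner{\nabla_y f(x,y),\, v - y} - r_2(v) - \frac{1}{2\eta}\norm{v-y}^2,
\end{equation*}
so that $\Psi_{\eta}(x,y) = \max_{v \in \Y}\psi_{\eta}(x,y;v)$ with unique maximizer $T_{y,\eta}(x,y)$. The hypotheses of \cite[Theorem 10.58]{rockafellar2009variational} must be verified: (i) $\psi_{\eta}$ is continuously differentiable in $(x,y)$ for fixed $v$ (from Assumption \ref{Assumption_f_Minmax}(1), noting that $\nabla^2_{yy}f$ exists on $\X \times \Y$ and $\nabla f$ is $L_f$-Lipschitz, so $\nabla^2_{xy} f$ exists almost everywhere; strictly speaking we apply the envelope result on $\X \times \Y$ where the required second-order partials exist); (ii) the maximizer is unique, which follows from strict concavity of $v \mapsto -r_2(v) - \tfrac{1}{2\eta}\norm{v-y}^2$ (convex $r_2$ plus strongly concave quadratic); (iii) the $v$-level sets are bounded locally uniformly in $(x,y)$, which follows from the quadratic penalty $-\tfrac{1}{2\eta}\norm{v-y}^2$. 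With these hypotheses, the envelope theorem yields
\begin{equation*}
    \nabla_x \Psi_{\eta}(x,y) = \nabla_x \psi_{\eta}(x,y;v)\big|_{v = T_{y,\eta}(x,y)}, \qquad \nabla_y \Psi_{\eta}(x,y) = \nabla_y \psi_{\eta}(x,y;v)\big|_{v = T_{y,\eta}(x,y)}.
\end{equation*}

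Next I would compute these two partial derivatives directly from the explicit formula for $\psi_{\eta}$. For the $x$-derivative, only the terms $f(x,y)$ and $\inner{\nabla_y f(x,y), v-y}$ depend on $x$, giving $\nabla_x f(x,y) + \nabla^2_{xy}f(x,y)(v-y)$; substituting $v = T_{y,\eta}(x,y)$ and using $v - y = \eta R_{y,\eta}(x,y)$ yields the stated formula. For the $y$-derivative, careful differentiation of each term gives
\begin{equation*}
    \nabla_y \psi_{\eta}(x,y;v) = \nabla_y f(x,y) + \nabla^2_{yy} f(x,y)(v-y) - \nabla_y f(x,y) + \tfrac{1}{\eta}(v-y) = \bigl(\tfrac{1}{\eta}I_p + \nabla^2_{yy}f(x,y)\bigr)(v-y),
\end{equation*}
where the two copies of $\nabla_y f(x,y)$ cancel; substituting $v - y = \eta R_{y,\eta}(x,y)$ gives $(I_p + \eta \nabla^2_{yy}f(x,y)) R_{y,\eta}(x,y)$, as claimed.

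Finally, the formulas for $\Xi_{(\eta,\alpha)} = \alpha \Psi_{\eta} - (\alpha - 1) f$ follow by taking the linear combination of $\nabla \Psi_{\eta}$ and $\nabla f$ and algebraic rearrangement; for the $y$-component one checks that
\begin{equation*}
    \alpha\bigl(I_p + \eta \nabla^2_{yy}f\bigr) R_{y,\eta} - (\alpha-1)\nabla_y f = \bigl(I_p + \alpha\eta \nabla^2_{yy}f\bigr) R_{y,\eta} + (1-\alpha)\bigl(\nabla_y f - R_{y,\eta}\bigr),
\end{equation*}
which matches the stated formula. The main (and only real) obstacle is the clean application of the envelope theorem: one must be confident that the cancellation of the $\nabla_y f$ terms in the $y$-partial is legitimate, i.e., that the dependence of the linearization basepoint on $y$ is already absorbed into the envelope derivative. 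This is exactly what \cite[Theorem 10.58]{rockafellar2009variational} guarantees under the uniqueness and local boundedness hypotheses established above.
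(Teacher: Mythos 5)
Your proposal is correct and follows exactly the route the paper intends: the paper omits the proof, simply citing \cite[Theorem 10.58]{rockafellar2009variational}, and your argument is a faithful fleshing-out of that citation — verifying the envelope-theorem hypotheses, differentiating the parametric objective, and obtaining the $\Xi_{(\eta,\alpha)}$ formulas by linearity. The algebra in both the $x$- and $y$-partials and in the rearrangement for $\nabla_y \Xi_{(\eta,\alpha)}$ checks out.
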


\subsection{Equivalence between (MM) and  (MMPen)}
In this subsection, we prove that with sufficiently large $\alpha \geq 1$, the minimization problem \eqref{Prob_Pen} is equivalent to the minimax problem \eqref{Prob_Minmax}. We begin our analysis with the following preliminary lemma. 
\begin{lem}
    \label{Le_gradient_inequality_FBEY}
    Suppose Assumption \ref{Assumption_f_Minmax} holds, and $\eta> 0$. Then for any  $(x, y) \in \X \times \Y$, it holds that 
    \begin{equation}
        \inner{R_{y, \eta}(x, y), R_{y, \eta}(x, y) -\nabla_y f(x, y) + \partial r_2(y) + \ca{N}_{\Y}(y)} \leq 0. 
    \end{equation}
\end{lem}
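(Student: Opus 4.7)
The plan is to interpret the claim as saying that for every $s \in \partial r_2(y) + \ca{N}_{\Y}(y)$, the inner product $\inner{R_{y,\eta}(x,y),\, R_{y,\eta}(x,y) - \nabla_y f(x,y) + s} \leq 0$, and to prove this by combining the first-order optimality condition for the PFBE proximal subproblem with the monotonicity of $\partial r_2 + \ca{N}_{\Y}$.

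First, I would write out the optimality condition for $T_{y,\eta}(x,y)$ in \eqref{eq:PGdef}. Since the inner problem is concave (the $-\frac{1}{2\eta}\norm{v-y}^2$ term dominates and $r_2 + \delta_{\Y}$ is convex), this condition reads
\begin{equation*}
    0 \in -\nabla_y f(x,y) + \tfrac{1}{\eta}\bigl(T_{y,\eta}(x,y) - y\bigr) + \partial r_2(T_{y,\eta}(x,y)) + \ca{N}_{\Y}(T_{y,\eta}(x,y)).
\end{equation*}
Using the definition $R_{y,\eta}(x,y) = \tfrac{1}{\eta}(T_{y,\eta}(x,y) - y)$, this yields an element
\begin{equation*}
    w := \nabla_y f(x,y) - R_{y,\eta}(x,y) \;\in\; \partial r_2(T_{y,\eta}(x,y)) + \ca{N}_{\Y}(T_{y,\eta}(x,y)).
\end{equation*}

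Second, for any $s \in \partial r_2(y) + \ca{N}_{\Y}(y)$, substituting $w$ rewrites the target quantity as
\begin{equation*}
    \inner{R_{y,\eta}(x,y),\, R_{y,\eta}(x,y) - \nabla_y f(x,y) + s} = \inner{R_{y,\eta}(x,y),\, s - w} = \tfrac{1}{\eta}\inner{T_{y,\eta}(x,y) - y,\, s - w},
\end{equation*}
so since $\eta > 0$ it suffices to show $\inner{T_{y,\eta}(x,y) - y,\, s - w} \leq 0$.

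Third, I would invoke Lemma \ref{Le_subgradient_inequality} applied to the convex function $r_2$ (with $\mu = 0$) over the convex set $\Y$. Applying it once with base point $y$ and subgradient $s$ gives $r_2(T_{y,\eta}(x,y)) \geq r_2(y) + \inner{s, T_{y,\eta}(x,y) - y}$, and once with base point $T_{y,\eta}(x,y)$ and subgradient $w$ gives $r_2(y) \geq r_2(T_{y,\eta}(x,y)) + \inner{w, y - T_{y,\eta}(x,y)}$. Adding these two inequalities cancels the $r_2$ values and produces exactly $\inner{s - w,\, T_{y,\eta}(x,y) - y} \leq 0$, which is the standard monotonicity of $\partial r_2 + \ca{N}_{\Y}$. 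Taking the union over all such $s$ yields the set-valued inequality in the statement.

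There is no real obstacle here; the only mild care is to notice that the displayed inequality is to be read as holding uniformly over all elements of the sum set on the right, and to correctly extract the relevant element $w$ from the PFBE optimality system so as to reduce the claim to monotonicity of a convex subdifferential.
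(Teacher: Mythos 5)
Your proposal is correct and follows essentially the same route as the paper: both extract $w=\nabla_y f(x,y)-R_{y,\eta}(x,y)\in \partial r_2(T_{y,\eta}(x,y))+\ca{N}_{\Y}(T_{y,\eta}(x,y))$ from the PFBE optimality condition and then apply Lemma \ref{Le_subgradient_inequality} twice (at $T_{y,\eta}(x,y)$ and at $y$), which is exactly the monotonicity of $\partial r_2+\ca{N}_{\Y}$ that you invoke. Your phrasing of the final step as a single monotonicity inequality is just a cleaner packaging of the paper's two subgradient inequalities.
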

\begin{proof}
    It follows from the formulation of PFE with respect to the $y$-variable in \eqref{Eq_FBE_Y} that 
    \begin{equation}
        0 \in R_{y, \eta}(x, y) - \nabla_y f(x, y) + \partial r_2(T_{y, \eta}(x, y)) + \ca{N}_{\Y}(T_{y, \eta}(x, y)).
    \end{equation}
    Hence 
    \begin{equation}
         \nabla_y f(x, y) - R_{y, \eta}(x, y) \in \partial r_2(T_{y, \eta}(x, y)) + \ca{N}_{\Y}(T_{y, \eta}(x, y)).
    \end{equation}
    From the convexity of $r_2$, Lemma \ref{Le_subgradient_inequality} illustrates that 
    \begin{equation}
        \label{Eq_Le_gradient_inequality_FBEY_0}
        r_2(y) \geq r_2(T_{y, \eta}(x, y)) + \inner{\nabla_y f(x, y) - R_{y, \eta}(x, y), y - T_{y, \eta}(x, y)}.
    \end{equation}
    Then by reorganizing the terms in \eqref{Eq_Le_gradient_inequality_FBEY_0}, we have
    \begin{equation}
        \inner{R_{y, \eta}(x, y) - \nabla_y f(x, y),  T_{y, \eta}(x, y) - y} \leq r_2(y) - r_2(T_{y, \eta}(x, y)).
    \end{equation}
    Therefore, it holds that 
    \begin{equation}
        \begin{aligned}
            &\inner{T_{y, \eta}(x, y) - y, R_{y, \eta}(x, y) -\nabla_y f(x, y) + \partial r_2(y) + \ca{N}_{\Y}(y)}\\
            ={}&\inner{T_{y, \eta}(x, y) - y, R_{y, \eta}(x, y) - \nabla_y f(x, y)} +  \inner{T_{y, \eta}(x, y) - y,  \partial r_2(y) + \ca{N}_{\Y}(y)}\\
            \leq{}& r_2(y) - r_2(T_{y, \eta}(x, y)) + \inner{T_{y, \eta}(x, y) - y,  \partial r_2(y) + \ca{N}_{\Y}(y)} \\
            \leq{}& 0. 
        \end{aligned}
    \end{equation}
    Here the last inequality directly follows from Lemma \ref{Le_subgradient_inequality} with $x_1 = T_{y, \eta}(x, y)$ and $x_2 = y$.  
    This completes the proof. 
\end{proof}

Now, we are ready to present the following theorem illustrating that the first-order minimax points of \eqref{Prob_Minmax} coincide with the first-order stationary points of \eqref{Prob_Pen}. 
\begin{theo}
    \label{Theo_Equivalence_Gamma}
    Suppose Assumption \ref{Assumption_f_Minmax} holds,  $\eta> 0$, and $\alpha \geq \max\{1, \frac{2}{\eta \mu}\}$. Then for any $(x, y) \in \X \times \Y$, $(x, y)$ is a first-order minimax point of \eqref{Prob_Minmax} if and only if it is a first-order stationary point of \eqref{Prob_Pen}. 
\end{theo}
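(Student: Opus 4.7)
The plan is to use Proposition \ref{Prop_fundmental_differential} to write out the first-order stationarity conditions for \eqref{Prob_Pen} explicitly, and then observe that the only way these conditions can hold is if $R_{y, \eta}(x, y) = 0$, which reduces them to the first-order minimax conditions for \eqref{Prob_Minmax}. The key tool for the nontrivial direction will be Lemma \ref{Le_gradient_inequality_FBEY} combined with the strong concavity bound on $\nabla_{yy}^2 f$, and the choice $\alpha \geq 2/(\eta \mu)$ is calibrated so that these two bounds combine to force $R_{y,\eta}(x, y) = 0$.

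For the easy direction, I would first note that if $(x, y) \in \X \times \Y$ is a first-order minimax point of \eqref{Prob_Minmax}, then the $y$-condition $0 \in -\nabla_y f(x,y) + \partial r_2(y) + \ca{N}_{\Y}(y)$ is exactly the first-order optimality condition characterizing $y$ as the unique maximizer of the strongly concave subproblem defining $T_{y,\eta}(x, y)$. Hence $T_{y,\eta}(x, y) = y$ and $R_{y,\eta}(x, y) = 0$. Substituting this into the gradient formulas from Proposition \ref{Prop_fundmental_differential} collapses $\nabla_x \Xi_{(\eta,\alpha)}(x, y)$ to $\nabla_x f(x, y)$ and $\nabla_y \Xi_{(\eta,\alpha)}(x, y)$ to $(1-\alpha)\nabla_y f(x, y)$. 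The $x$-stationarity of \eqref{Prob_Pen} then matches the $x$-minimax condition directly, and the $y$-stationarity reduces to $0 \in (\alpha-1)\bigl(-\nabla_y f(x,y) + \partial r_2(y)\bigr) + \ca{N}_{\Y}(y)$, which follows from the minimax $y$-condition by scaling and the fact that $\ca{N}_{\Y}(y)$ is a cone.

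For the converse direction, suppose $(x, y)$ is a first-order stationary point of \eqref{Prob_Pen}. Writing $R := R_{y,\eta}(x, y)$ and using Proposition \ref{Prop_fundmental_differential}, there exist $g_y \in \partial r_2(y)$ and $\nu \in \ca{N}_{\Y}(y)$ such that
\begin{equation*}
    0 = \bigl(I_p + \alpha \eta \nabla_{yy}^2 f(x,y)\bigr) R + (1-\alpha)\bigl(\nabla_y f(x,y) - R\bigr) + (\alpha-1) g_y + \nu.
\end{equation*}
Rearranging and dividing by $\alpha - 1$ (noting that if $\alpha = 1$ then $\eta\mu \geq 2$, but this case can be handled analogously, or one can simply assume $\alpha > 1$ since $R = 0$ trivially otherwise), I take inner product with $R$ and apply Lemma \ref{Le_gradient_inequality_FBEY} to the term $\inner{R, R - \nabla_y f(x, y) + g_y + \nu/(\alpha-1)}$, which is nonpositive. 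This yields $\inner{R, (I_p + \alpha\eta \nabla_{yy}^2 f(x,y))R} \geq 0$. On the other hand, since $R \in \ca{T}_{\Y}(y)$ (as $T_{y,\eta}(x,y), y \in \Y$ and $\Y$ is convex), the strong concavity lemma gives $\inner{R, \nabla_{yy}^2 f(x, y) R} \leq -\mu \norm{R}^2$, so
\begin{equation*}
    \inner{R, (I_p + \alpha\eta \nabla_{yy}^2 f(x,y))R} \leq (1 - \alpha\eta\mu)\norm{R}^2 \leq -\norm{R}^2,
\end{equation*}
where the last inequality uses $\alpha \geq 2/(\eta\mu)$. Combining both bounds forces $R = 0$, which then collapses the \eqref{Prob_Pen} stationarity conditions to the \eqref{Prob_Minmax} minimax conditions exactly as in the easy direction.

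The main obstacle is the converse direction, specifically the step of certifying $R = 0$; the subtlety is that the normal cone multiplier $\nu$ must be handled compatibly with Lemma \ref{Le_gradient_inequality_FBEY} (which quantifies over the whole sum $\partial r_2(y) + \ca{N}_{\Y}(y)$), and that one must verify $R \in \ca{T}_{\Y}(y)$ in order to invoke the strong concavity bound on tangent directions. Once these two facts are in hand, the rest of the argument is essentially algebra.
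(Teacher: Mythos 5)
Your proposal is correct and follows essentially the same route as the paper's proof: write the stationarity conditions of \eqref{Prob_Pen} via Proposition \ref{Prop_fundmental_differential}, pair the $y$-condition with $R_{y,\eta}(x,y)$, and combine Lemma \ref{Le_gradient_inequality_FBEY} with the strong-concavity bound $\inner{d,\nabla_{yy}^2 f(x,y)d}\leq -\mu\norm{d}^2$ on tangent directions to force $R_{y,\eta}(x,y)=0$, after which both sets of conditions coincide. The only cosmetic difference is that you absorb the normal-cone multiplier $\nu/(\alpha-1)$ into the set quantified by Lemma \ref{Le_gradient_inequality_FBEY}, whereas the paper splits it off and bounds $\inner{R_{y,\eta}(x,y),d_n}\leq 0$ directly from $R_{y,\eta}(x,y)\in\ca{T}_{\Y}(y)$.
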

\begin{proof}
    We first prove the ``if'' part of the theorem. 
    From the optimality condition for \eqref{Prob_Pen} in Definition \ref{Def_FOSP_Pen}, we have
    \begin{equation}
        \label{Eq_Theo_Equivalence_Gamma_0}
        \left\{
        \begin{aligned}
            0 \in{}& \nabla_x f(x,y) + \alpha \eta \nabla_{xy}^2 f(x, y)R_{y, \eta}(x, y) + \partial r_1(x) + \ca{N}_{\X}(x), \\
            0 \in{}&  \left( I_p +  \alpha \eta \nabla_{yy}^2 f(x, y)  \right)R_{y, \eta}(x, y) + (\alpha-1) \left(R_{y, \eta}(x, y) -\nabla_y f(x, y) + \partial r_2(y)\right) + \ca{N}_{\Y}(y).
        \end{aligned}
        \right.
    \end{equation}

    Notice that $R_{y, \eta}(x, y) \in \ca{T}_{\Y}(y)$, we have
    \begin{equation}
        \label{Eq_Theo_Equivalence_Gamma_1}
        \inner{R_{y, \eta}(x, y), d} \leq 0, \quad \forall d \in \ca{N}_{\Y}(y). 
    \end{equation}
    Moreover,  from the optimality condition in \eqref{Eq_Theo_Equivalence_Gamma_0} with respect to the $y$-variable, there exists $d_n \in \ca{N}_{\Y}(y)$ and $d_y \in R_{y, \eta}(x, y) -\nabla_y f(x, y)  + \partial r_2(y) + \ca{N}_{\Y}(y)$ such that 
    \begin{equation}
        \label{Eq_Theo_Equivalence_Gamma_2}
        \begin{aligned}
            0={}&\inner{R_{y, \eta}(x, y),  \left( I_p +  \alpha\eta \nabla_{yy}^2 f(x, y)  \right)R_{y, \eta}(x, y) + (\alpha-1) d_y + d_n}\\
            \leq{}& \inner{R_{y, \eta}(x, y),  \left( I_p +  \alpha \eta \nabla_{yy}^2 f(x, y)  \right)R_{y, \eta}(x, y) } + (\alpha-1)\inner{R_{y, \eta}(x, y), d_y}\\
            \leq{}& \inner{R_{y, \eta}(x, y),  \left( I_p +  \alpha \eta \nabla_{yy}^2 f(x, y)  \right)R_{y, \eta}(x, y) }\\
            \leq{}&  - \frac{\mu \alpha \eta}{2} \norm{R_{y, \eta}(x, y)}^2 \leq 0. 
        \end{aligned}
    \end{equation}
    Here the first inequality uses \eqref{Eq_Theo_Equivalence_Gamma_1}, and the second inequality directly follows from Lemma \ref{Le_gradient_inequality_FBEY}. In addition, Assumption \ref{Assumption_f_Minmax} illustrates that $I_p + \alpha \eta \nabla_{yy}^2 f(x, y) \preceq (1-\alpha \eta\mu) I_p$, which leads to the last inequality of \eqref{Eq_Theo_Equivalence_Gamma_2}. 
    Thus we can conclude that $R_{y, \eta}(x, y) = 0$. Together with \eqref{Eq_Theo_Equivalence_Gamma_0}, we have 
    \begin{equation*}
        0 \in \nabla_x f(x, y) + \partial r_1(x) + \ca{N}_{\X}(x), \qquad 0 \in -\nabla_y f(x, y) + \partial r_2(y) +  \ca{N}_{\Y}(y).
    \end{equation*}
    This shows that $(x, y)$ is a first-order minimax point of \eqref{Prob_Minmax}.

    Next, we show the validity of the ``only if'' part of this theorem. For any $(x, y)$ that is a first-order minimax point of \eqref{Prob_Minmax}, it follows from Definition \ref{Def_FOSP_MinMax} that 
    \begin{equation*}
        0 \in \nabla_x f(x, y) + \partial r_1(x) + \ca{N}_{\X}(x), \qquad 0 \in -\nabla_y f(x, y) + \partial r_2(y) +  \ca{N}_{\Y}(y). 
    \end{equation*}
    This implies that $T_{y,\eta}(x,y)=y$, which leads to the fact that $y$ satisfies the optimality condition of (PFBE). Therefore, it holds that $R_{y, \eta}(x, y) = 0$. As a result, we have 
    \begin{eqnarray*}
           && \partial_x \Gamma_{(\eta, \alpha)} (x, y) = \nabla_x \Xi_{(\eta, \alpha)}(x, y) + \partial r_1(x) = \nabla_x f(x, y) + \partial r_1(x)  \in -  \ca{N}_{\X}(x),
\\[3pt]
        && \partial_y \Gamma_{(\eta, \alpha)} (x, y) = (\alpha - 1)\left( -\nabla_y f(x, y) + \partial r_2(y) \right) \in -  \ca{N}_{\Y}(y)  .
    \end{eqnarray*}
    Therefore, from Definition \ref{Def_FOSP_Pen}, we can conclude that $(x, y)$ is a first-order stationary point of \eqref{Prob_Pen}. This completes the entire proof of this theorem.  
\end{proof}

From the equivalence between \eqref{Prob_Minmax} and \eqref{Prob_Pen} in Theorem \ref{Theo_Equivalence_Gamma}, we can directly employ existing methods for minimization over $\X \times \Y$ to compute an $\varepsilon$-first-order stationary point of \eqref{Prob_Pen} with guaranteed complexity. Therefore, it is important to investigate the relationship between the $\varepsilon$-first-order stationary points of \eqref{Prob_Pen} and the $\varepsilon$-first-order minimax points of \eqref{Prob_Minmax}. 
The following theorem demonstrates that any $\varepsilon$-first-order stationary point of \eqref{Prob_Pen} is also a $(1 + \frac{2L_f}{\mu} + \eta L_f) \varepsilon$-first-order minimax point of \eqref{Prob_Minmax}.
\begin{theo}
    \label{Theo_eps_Equivalence_Gamma}
    Suppose Assumption \ref{Assumption_f_Minmax} holds,  $\eta> 0$, and $\alpha \geq \max\{1, \frac{2}{\eta \mu}\}$. Then for any $(x, y) \in \X \times \Y$ that is an $\varepsilon$-first-order stationary point of \eqref{Prob_Pen}, it holds that $(x, T_{y, \eta}(x, y))$ is a $(1+ \frac{2L_f}{\mu} + \eta L_f) \varepsilon$-first-order minimax point of \eqref{Prob_Minmax}.
\end{theo}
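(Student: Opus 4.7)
The plan is to first use the $y$-component of the $\varepsilon$-first-order stationarity condition for \eqref{Prob_Pen} to bound $\|R_{y, \eta}(x, y)\|$ by $O(\varepsilon)$, and then to transfer the approximate stationarity at $(x, y)$ to $(x, T_{y, \eta}(x, y))$ by combining Lipschitz continuity of $\nabla f$ with the proximal optimality built into the definition of $T_{y, \eta}$.

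To bound the residual, I would apply Definition \ref{Def_FOSP_Pen} together with Proposition \ref{Prop_fundmental_differential} to unpack the $\varepsilon$-stationarity of $(x, y)$ for \eqref{Prob_Pen}: there exist $u_x \in \partial r_1(x)$, $v_x \in \ca{N}_{\X}(x)$, $u_y \in \partial r_2(y)$, and $v_y \in \ca{N}_{\Y}(y)$ whose corresponding residuals in Definition \ref{Def_FOSP_Pen} have norm at most $\varepsilon$. Writing $\xi := (I_p + \alpha \eta \nabla_{yy}^2 f(x, y)) R_{y, \eta}(x, y) + (\alpha-1)\bigl(R_{y, \eta}(x, y) - \nabla_y f(x, y) + u_y\bigr) + v_y$, so that $\|\xi\| \leq \varepsilon$, I would mimic the calculation in Theorem \ref{Theo_Equivalence_Gamma} and evaluate $\inner{R_{y, \eta}(x, y), \xi}$. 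The three pieces are controlled by (i) strong concavity combined with $\alpha\eta\mu \geq 2$, giving $\inner{R_{y,\eta}(x,y), (I_p + \alpha\eta \nabla_{yy}^2 f(x,y))R_{y,\eta}(x,y)} \leq -\tfrac{\alpha\eta\mu}{2}\|R_{y,\eta}(x,y)\|^2$; (ii) Lemma \ref{Le_gradient_inequality_FBEY} applied to the $u_y$-term (with $\alpha - 1 \geq 0$); and (iii) $R_{y,\eta}(x, y) \in \ca{T}_{\Y}(y)$ paired against $v_y \in \ca{N}_{\Y}(y)$. Combined with Cauchy--Schwarz $\inner{R_{y, \eta}(x, y), \xi} \geq -\varepsilon \|R_{y, \eta}(x, y)\|$, this yields
\begin{equation*}
    \|R_{y, \eta}(x, y)\| \;\leq\; \frac{2\varepsilon}{\alpha \eta \mu} \;\leq\; \varepsilon.
\end{equation*}

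Writing $y' := T_{y, \eta}(x, y) = y + \eta R_{y,\eta}(x, y)$, the first-order optimality of the proximal subproblem defining $T_{y, \eta}$ supplies $u_y' \in \partial r_2(y')$ and $v_y' \in \ca{N}_{\Y}(y')$ with $u_y' + v_y' = \nabla_y f(x, y) - R_{y, \eta}(x, y)$. The $y$-residual at $(x, y')$ then satisfies
\begin{equation*}
    \|-\nabla_y f(x, y') + u_y' + v_y'\| \;=\; \|\nabla_y f(x, y) - \nabla_y f(x, y') - R_{y, \eta}(x, y)\| \;\leq\; (1 + \eta L_f) \|R_{y, \eta}(x, y)\|,
\end{equation*}
using $\|y' - y\| = \eta \|R_{y, \eta}(x, y)\|$ and Lipschitzness of $\nabla_y f$. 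For the $x$-residual, the triangle inequality applied to the first $\varepsilon$-stationarity bound gives
\begin{equation*}
    \|\nabla_x f(x, y') + u_x + v_x\| \;\leq\; \varepsilon + \eta L_f \|R_{y,\eta}(x,y)\| + \alpha \eta L_f \|R_{y, \eta}(x, y)\| \;\leq\; \varepsilon + \left( \eta L_f + \tfrac{2 L_f}{\mu} \right)\varepsilon,
\end{equation*}
where in the last inequality I use $\|R_{y,\eta}(x,y)\| \leq \varepsilon$ on the first $\|R\|$-term and $\|R_{y,\eta}(x,y)\| \leq 2\varepsilon/(\alpha \eta \mu)$ on the second, so that the factor $\alpha$ cancels. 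Taking the larger of the two bounds yields the claim.

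The main obstacle is Step 1, where one must correctly aggregate the signs of three set-valued inner-product contributions and apply Lemma \ref{Le_gradient_inequality_FBEY} at the specific $u_y \in \partial r_2(y)$ supplied by the $\varepsilon$-stationarity (with zero normal component). The apparent concern that the Hessian correction $\alpha \eta \nabla_{xy}^2 f(x,y) R_{y,\eta}(x,y)$ might blow up with $\alpha$ is resolved precisely by the $1/\alpha$ factor available in the bound on $\|R_{y,\eta}(x,y)\|$; this asymmetric use of the residual bound is what allows the final constant $1 + 2L_f/\mu + \eta L_f$ to be independent of $\alpha$.
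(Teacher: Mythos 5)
Your proposal is correct and follows essentially the same route as the paper's proof: bound $\norm{R_{y,\eta}(x,y)}$ by pairing the $y$-residual against $R_{y,\eta}(x,y)$ (using strong concavity, Lemma \ref{Le_gradient_inequality_FBEY}, and the tangent/normal cone pairing), then transfer stationarity to $(x, T_{y,\eta}(x,y))$ via the proximal optimality condition and Lipschitz continuity of $\nabla f$, using $\norm{R_{y,\eta}(x,y)} \leq 2\varepsilon/(\alpha\eta\mu)$ on the $\alpha$-weighted cross-Hessian term and $\norm{R_{y,\eta}(x,y)} \leq \varepsilon$ elsewhere. Your closing remark about the asymmetric use of the residual bound to cancel the factor $\alpha$ is exactly the mechanism in the paper's argument.
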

\begin{proof}
    For any $(x, y) \in \X \times \Y$ that is a $\varepsilon$-first-order stationary point of \eqref{Prob_Pen}, there exists $e_x \in \Rn$ and $e_y \in \Rp$ such that $\norm{e_x} \leq \varepsilon$, $\norm{e_y} \leq \varepsilon$ and  
    \begin{equation}
        \label{Eq_Theo_eps_Equivalence_Gamma_0}
        \left\{
        \begin{aligned}
            e_x \in{}& \nabla_x f(x,y) + \alpha \eta \nabla_{xy}^2 f(x, y)R_{y, \eta}(x, y) + \partial r_1(x) + \ca{N}_{\X}(x), \\
            e_y \in{}& \left( I_p + \alpha \eta \nabla_{yy}^2 f(x, y)  \right)R_{y, \eta}(x, y) + (\alpha-1 ) (R_{y, \eta}(x, y) - \nabla_y f(x, y) + \partial r_2(y)) + \ca{N}_{\Y}(y).
        \end{aligned}
        \right.
    \end{equation}
    Then similar to the techniques in \eqref{Eq_Theo_Equivalence_Gamma_2}, it holds that  
    \begin{equation*}
        \begin{aligned}
            &\varepsilon \norm{R_{y, \eta}(x, y)} \geq \inner{-R_{y, \eta}(x, y), e_y}\\
            \geq{}& -\inner{R_{y, \eta}(x, y),  \left( I_p + \alpha \eta \nabla_{yy}^2 f(x, y)  \right)R_{y, \eta}(x, y)} \\
            &+\inf_{d_{1} \in \partial r_2(y), ~d_{2} \in \ca{N}_{\Y}(y) }-\inner{R_{y, \eta}(x, y), (\alpha-1 ) (R_{y, \eta}(x, y) - \nabla_y f(x, y) + d_{1} ) + d_{2}}\\
            \geq{}& -\inner{R_{y, \eta}(x, y),  \left( I_p + \alpha \eta \nabla_{yy}^2 f(x, y)  \right)R_{y, \eta}(x, y)} \geq  \frac{\mu \alpha \eta}{2} \norm{R_{y, \eta}(x, y)}^2.
        \end{aligned}
    \end{equation*}
    Here the third inequality directly follows from Lemma \ref{Le_gradient_inequality_FBEY}. 
    Therefore, we can conclude that 
    \begin{equation*}
        \label{Eq_Theo_eps_Equivalence_Gamma_1}
        \norm{R_{y, \eta}(x, y)} \leq \frac{2}{\mu \alpha \eta} \varepsilon \leq \varepsilon. 
    \end{equation*}    
    Then from the $L_f$-Lipschitz continuity of $\nabla f$, we have the following estimation illustrating the stationarity with respect to the $y$-variable, 
    \begin{equation*}
        \begin{aligned}
            &\mathrm{dist}\left(0,  -\nabla_y f(x, T_{y, \eta}(x, y)) + \partial r_2(T_{y, \eta}(x, y)) + \ca{N}_{\Y}(T_{y, \eta}(x, y))  \right)\\
            \leq{}& \mathrm{dist}\left(0,  -\nabla_y f(x, y) + \partial r_2(T_{y, \eta}(x, y)) + \ca{N}_{\Y}(T_{y, \eta}(x, y))  \right) + \eta L_f\norm{R_{y, \eta}(x, y)}\\
            \leq{}& (1+\eta L_f)\norm{R_{y, \eta}(x, y)}  \leq (1+\eta L_f)\varepsilon.
        \end{aligned}
    \end{equation*}
    Here the second inequality follows from the fact that $R_{y, \eta}(x, y) = \frac{1}{\eta}\left(T_{y, \eta}(x, y) - y\right) \in \nabla_y f(x, y) - \partial r_2(T_{y, \eta}(x, y)) - \ca{N}_{\Y}(T_{y, \eta}(x, y))$.

    On the other hand, for the stationarity with respect to the $x$-variable, from \eqref{Eq_Theo_eps_Equivalence_Gamma_0}, we have
    \begin{equation*}
        \mathrm{dist}\left(0,  \nabla_x f(x,y) + \partial r_1(x) +\ca{N}_{\X}(x)  \right) \leq \norm{e_x} + \alpha \eta L_f \norm{R_{y, \eta}(x, y)} \leq (1+ \frac{2L_f}{\mu}) \varepsilon. 
    \end{equation*}
    Therefore, from the Lipschitz continuity of $\nabla_x f$, it holds that 
    \begin{equation*}
        \begin{aligned}
            &\mathrm{dist}\left(0,  \nabla_x f(x,T_{y, \eta}(x, y)) +\partial r_1(x)+\ca{N}_{\X}(x)  \right) \\
            \leq{}& \mathrm{dist}\left(0,  \nabla_x f(x,y)+ \partial r_1(x)+\ca{N}_{\X}(x)  \right) + \eta L_f \norm{R_{y, \eta}(x, y)} \leq (1+ \frac{2L_f}{\mu} + \eta L_f) \varepsilon.
        \end{aligned}
    \end{equation*}
    This completes the proof. 
\end{proof}

In the rest of this subsection, we investigate the boundedness of $\Gamma_{(\eta, \alpha)}$ over $\X \times \Y$. 
Under Assumption \ref{Assumption_f_Minmax},  we denote 
\begin{equation}
    F(x) := \max_{y \in \Y} f(x, y) + r_1(x) - r_2(y).
\end{equation}
The strong concavity of $f(x, \cdot)$ in Assumption \ref{Assumption_f_Minmax}(2) and \cite[Proposition 2.143]{bonnans2013perturbation} imply that $F(x)$ is well-defined and continuous over $\X$. The following proposition shows that $\Gamma_{(\eta, \alpha)}$ in \eqref{Prob_Pen} is bounded below over $\X \times \Y$ under mild conditions. 
\begin{prop}
    \label{Prop_BoundBelow_Gamma}
    Suppose Assumption \ref{Assumption_f_Minmax} holds, $\eta > 0$, and $\alpha \geq \max\{1, \frac{2}{\eta \mu}\}$. Then we have
    \begin{equation}
         \inf_{x \in \X, y \in \Y} \Gamma_{(\eta, \alpha)}(x, y) = \inf_{x \in \X} F(x). 
    \end{equation}
\end{prop}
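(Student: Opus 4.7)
The plan is to establish the two inequalities $\inf_{(x,y) \in \X \times \Y} \Gamma_{(\eta, \alpha)}(x, y) \leq \inf_{x \in \X} F(x)$ and the reverse inequality separately. A key auxiliary object is the inner maximizer $\bar{y}(x) := \argmax_{v \in \Y} \bigl( f(x, v) - r_2(v) \bigr)$, which is well-defined and unique under Assumption \ref{Assumption_f_Minmax} since $f(x, \cdot) - r_2(\cdot)$ is $\mu$-strongly concave over the closed convex set $\Y$; by construction $F(x) = f(x, \bar{y}(x)) - r_2(\bar{y}(x)) + r_1(x)$.

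For the $\leq$ direction, I would evaluate $\Gamma_{(\eta, \alpha)}$ at $(x, \bar{y}(x))$. The first-order optimality of $\bar{y}(x)$, namely $\nabla_y f(x, \bar{y}(x)) \in \partial r_2(\bar{y}(x)) + \ca{N}_{\Y}(\bar{y}(x))$, is identical to the optimality condition for the strictly concave proximal subproblem defining $T_{y, \eta}(x, \bar{y}(x))$ in \eqref{eq:PGdef}; this forces $T_{y, \eta}(x, \bar{y}(x)) = \bar{y}(x)$, hence $R_{y, \eta}(x, \bar{y}(x)) = 0$ and $\Psi_\eta(x, \bar{y}(x)) = f(x, \bar{y}(x)) - r_2(\bar{y}(x))$. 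Substituting into the definition of $\Gamma_{(\eta, \alpha)}$, every $\alpha$-dependent term cancels and yields $\Gamma_{(\eta, \alpha)}(x, \bar{y}(x)) = F(x)$; taking the infimum over $x$ gives $\inf_{(x,y)} \Gamma_{(\eta, \alpha)} \leq \inf_{x} F(x)$.

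For the $\geq$ direction, I would establish the stronger pointwise bound $\Gamma_{(\eta, \alpha)}(x, y) \geq F(x)$ for every $(x, y) \in \X \times \Y$. Fix $(x, y)$ and pick $t \in (0, 1]$; by convexity of $\Y$ the point $v_t := y + t(\bar{y}(x) - y)$ lies in $\Y$. Using $v_t$ as a feasible competitor in the max defining $\Psi_\eta(x, y)$, together with convexity of $r_2$ giving $r_2(v_t) \leq (1 - t) r_2(y) + t r_2(\bar{y}(x))$ and strong concavity of $f(x, \cdot)$ giving $\inner{\nabla_y f(x, y), \bar{y}(x) - y} \geq f(x, \bar{y}(x)) - f(x, y) + \frac{\mu}{2} \norm{\bar{y}(x) - y}^2$, I arrive at
\begin{equation*}
    \Psi_\eta(x, y) \geq (1 - t)\bigl[ f(x, y) - r_2(y) \bigr] + t \bigl[ f(x, \bar{y}(x)) - r_2(\bar{y}(x)) \bigr] + \frac{t(\mu - t/\eta)}{2} \norm{\bar{y}(x) - y}^2.
\end{equation*}
Choosing $t = 1/\alpha$, the hypothesis $\alpha \geq \max\{1, 2/(\eta\mu)\}$ guarantees $t \in (0, 1]$ and $1/(\alpha\eta) \leq \mu/2 \leq \mu$, so the quadratic remainder is nonnegative and may be dropped. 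Multiplying through by $\alpha$ and substituting into the definition of $\Gamma_{(\eta, \alpha)}$, the $(\alpha - 1)[f(x, y) - r_2(y)]$ contributions telescope away, leaving $\Gamma_{(\eta, \alpha)}(x, y) \geq f(x, \bar{y}(x)) - r_2(\bar{y}(x)) + r_1(x) = F(x)$, which completes the proof after taking infimum over $(x, y)$.

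The main obstacle is engineering the test point $v_t$ so that the strong-concavity quadratic absorbs the $-\frac{1}{2\eta}\norm{v-y}^2$ proximal penalty hidden inside $\Psi_\eta$. The affine interpolation $v_t = y + t(\bar{y}(x) - y)$ together with the tuning $t = 1/\alpha$ is exactly what lets the hypothesis on $\alpha$ bite; the remaining manipulations are essentially bookkeeping.
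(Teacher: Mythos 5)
Your proof is correct, and it takes a genuinely different route from the paper's. The paper fixes $x$, takes $y^*$ to be a minimizer of $\Gamma_{(\eta,\alpha)}(x,\cdot)$ over $\Y$, writes the first-order stationarity condition in $y$, and combines Lemma \ref{Le_gradient_inequality_FBEY} with the curvature bound $I_p + \alpha\eta\nabla^2_{yy}f(x,y^*) \preceq (1-\alpha\eta\mu)I_p$ to force $R_{y,\eta}(x,y^*)=0$, whence $y^*$ is the inner maximizer and $\Gamma_{(\eta,\alpha)}(x,y^*)=F(x)$. You instead prove the stronger pointwise inequality $\Gamma_{(\eta,\alpha)}(x,y)\geq F(x)$ by testing the supremum defining $\Psi_\eta$ with the interpolant $v_t = y + t(\bar y(x)-y)$ at $t=1/\alpha$, and verify equality at $y=\bar y(x)$ separately via $T_{y,\eta}(x,\bar y(x))=\bar y(x)$. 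Your approach buys two things: it needs no differentiability of $\Psi_\eta$ (the paper's argument leans on the gradient formulas of Proposition \ref{Prop_fundmental_differential} and hence on $\nabla^2_{yy}f$), and it sidesteps the paper's implicit assumption that the minimizer $y^* = \argmin_{y\in\Y}\Gamma_{(\eta,\alpha)}(x,y)$ exists — your pointwise bound together with the equality at $\bar y(x)$ establishes attainment directly. Both arguments use the threshold $\alpha \geq 2/(\eta\mu)$ in an essential way (in your case to make $\mu - t/\eta \geq 0$ at $t = 1/\alpha$; in the paper's case to make $1-\alpha\eta\mu \leq -\alpha\eta\mu/2$). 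The computations in both directions of your argument check out, including the telescoping of the $(\alpha-1)\bigl[f(x,y)-r_2(y)\bigr]$ terms.
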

\begin{proof}
    For any $x \in \X$, let $y^* = \argmin_{y \in \Y} \Gamma_{(\eta, \alpha)}(x, y)$. It holds that 
    \begin{equation} \label{eq:4.24}
        0 \in \left( I_p + \alpha \eta \nabla_{yy}^2 f(x, y^* )  \right)R_{y, \eta}(x, y^* ) + (\alpha-1) \left(R_{y, \eta}(x, y^* ) 
        - \nabla_y f(x, y^* ) + \partial r_2(y^*)\right) + \ca{N}_{\Y}(y^* ).
    \end{equation}
    Then together with Lemma \ref{Le_gradient_inequality_FBEY}, there exists $d_n \in \ca{N}_{\Y}(y^*)$ and $d_y \in \left(R_{y, \eta}(x, y^*) -\nabla_y f(x, y^*)\right)  + \partial r_2(y^*) + \ca{N}_{\Y}(y^*)$ such that 
    \begin{equation*}
        \begin{aligned}
            0={}&\inner{R_{y, \eta}(x, y^*),  \left( I_p +  \alpha\eta \nabla_{yy}^2 f(x, y^*)  \right)R_{y, \eta}(x, y^*) + (\alpha-1) d_y + d_n}\\
            \leq{}& \inner{R_{y, \eta}(x, y^*),  \left( I_p +  \alpha \eta \nabla_{yy}^2 f(x, y^*)  \right)R_{y, \eta}(x, y^*) } + (\alpha-1)\inner{R_{y, \eta}(x, y^*), d_y}\\
            \leq{}& \inner{R_{y, \eta}(x, y^*),  \left( I_p +  \alpha \eta \nabla_{yy}^2 f(x, y^*)  \right)R_{y, \eta}(x, y^*) }\\
            \leq{}&  - \frac{\mu \alpha \eta}{2} \norm{R_{y, \eta}(x, y^*)}^2 \leq 0. 
        \end{aligned}
    \end{equation*}
    Then we can conclude that $R_{y, \eta}(x, y^*) = 0$. Thus \eqref{eq:4.24} reduces to $0\in - \nabla_y f(x, y^* ) + \partial r_2(y^*) + \ca{N}_{\Y}(y^* )$. This, together with
    the strong concavity of $y \mapsto f(x, y)$ in Assumption \ref{Assumption_f_Minmax}(2), implies that $y^* = \argmax_{y \in \Y} f(x, y) + r_1(x) - r_2(y)$. As a result, we have
    $F(x)= f(x,y^*) + r_1(x) - r_2(y^*) = \Gamma_{(\eta,
    \alpha)}(x,y^*) =\inf_{y\in\Y} \Gamma_{(\eta,\alpha)}(x,y)$. Thus
    \begin{equation*}
        \inf_{x \in \X} F(x) = \inf_{x \in \X}\inf_{y \in \Y} \Gamma_{(\eta, \alpha)}(x, y) = \inf_{x \in \X, y \in \Y} \Gamma_{(\eta, \alpha)}(x, y). 
    \end{equation*}
    This completes the proof. 
\end{proof}

\subsection{Subgradient descent-ascent method}

By transforming \eqref{Prob_Minmax} into \eqref{Prob_Pen}, various existing approaches in nonsmooth nonconvex optimization over $\X \times \Y$ can be directly implemented to solve \eqref{Prob_Minmax}, and thus solving \eqref{Prob_Con_Minmax} through \eqref{Prob_Pen_Minmax}. Furthermore, the convergence properties of those approaches, including the global convergence and iteration complexity, directly follow the existing related works.

In this subsection, we go beyond the direct application of existing subgradient methods \cite{davis2020stochastic,castera2021inertial,bolte2021conservative,bolte2022long,hu2022constraint,xiao2023adam} to demonstrate that our equivalent minimization problem \eqref{Prob_Pen} allows us to design and analyze the following subgradient descent-ascent method for solving \eqref{Prob_Minmax}:
\begin{equation}
    \label{Eq_SubGDA}
    \left\{
    \begin{aligned}
        &\xkp \in \Pi_{\X}(\xk - \eta_{x,k} (\nabla_x f(\xk, \yk) + \partial r_1(\xk) ) ),\\
        &\ykp = \yk + \eta_{y,k}  R_{y, \eta}(\xkp, \yk).
    \end{aligned}
    \right.
\end{equation}
In particular, we can prove that $\Gamma_{(\eta, \alpha)}$ serves as a Lyapunov function for the differential inclusion corresponding to the discrete update scheme \eqref{Eq_SubGDA}. This further emphasizes the importance of \eqref{Prob_Pen} in proving the convergence properties of the optimization methods designed for solving \eqref{Prob_Minmax}, and thus solving \eqref{Prob_Con_Minmax} through \eqref{Prob_Pen_Minmax}.

Let $\mathcal{C}_h$ denote the collection of all first-order minimax points of \eqref{Prob_Minmax}. That is, 
\begin{equation}
    \ca{C}_h:= \{(x, y) \in \X \times \Y: (0,0) \in \partial h(x, y) + (\ca{N}_{\X}(x), - \ca{N}_{\Y}(y))  \}.
\end{equation}
We begin with the following assumptions on \eqref{Prob_Minmax} and \eqref{Eq_SubGDA}. 
\begin{assumpt}
    \label{Assumption_alg_f}
    \begin{enumerate}
        \item Function $r_1$ is path-differentiable \cite{bolte2021conservative}. 
        \item Subeet $\X$ is a closed convex subset of $\Rn$. 
        \item Subset $\{h(x, y): (x, y) \in \ca{C}_h \}$ is a finite subset of $\bb{R}$. 
    \end{enumerate}
\end{assumpt}

Here we make some comments on Assumption \ref{Assumption_alg_f}. Assumption \ref{Assumption_alg_f}(1) assumes the path-differentiability \cite[Definition 4]{bolte2021conservative} of the term $r_1$. It is worth mentioning that the class of path-differentiable functions is general enough to cover the objectives in a wide range of real-world problems. As shown in \cite[Section 5.1]{davis2020stochastic}, any Clarke regular function is path-differentiable. Beyond Clarke regular functions, another important class of path-differentiable functions are functions whose graphs are definable in an $o$-minimal structure \cite[Definition 5.10]{davis2020stochastic}. Since definability is preserved under finite summation and composition, \cite{davis2020stochastic,bolte2021conservative} shows that numerous common objective functions are all definable. Moreover, Assumption \ref{Assumption_alg_f}(2) is a common assumptions for \eqref{Prob_Minmax}. Furthermore, Assumption \ref{Assumption_alg_f}(3) is referred to as the nonsmooth Morse-Sard property \cite{davis2020stochastic}. As demonstrated in \cite{bolte2021conservative} and Theorem \ref{Theo_Equivalence_Gamma}, Assumption \ref{Assumption_alg_f}(3) holds whenever $h$ is definable, hence is mild in practice. 

Moreover, for the subgradient descent-ascent method \eqref{Eq_SubGDA}, we make the following assumptions.
\begin{assumpt}
    \label{Assumption_alg_f_addition}
    \begin{enumerate}
        \item The sequences of stepsizes $\{\eta_{x,k}\}$ and $\{\eta_{y,k}\}$ are positive and satisfy
        \begin{equation}
            \sum_{k\geq 0} \eta_{x,k} = +\infty,  \quad \sum_{k\geq 0} \eta_{y,k} = +\infty, \quad \text{and} \quad \sup_{k\geq 0} \eta_{y,k} \leq \eta.   
        \end{equation}
        \item There exists $\theta \geq \frac{\alpha \eta L_f^2}{\mu}$ for a prefixed $\alpha \geq \max\{1, \frac{2}{\eta \mu}\}$, such that $\lim_{k\to +\infty} \frac{\eta_{y,k}}{\eta_{x, k}} = \theta$.
        \item For any $\alpha \geq \max\{1, \frac{2}{\eta \mu}\}$, the function $\Gamma_{(\eta, \alpha)}$ is coercive over $\X \times \Y$. 
    \end{enumerate}
\end{assumpt}

In the rest of this subsection, we aim to establish the convergence of \eqref{Eq_SubGDA} under Assumption \ref{Assumption_alg_f}. We begin our analysis with the following definition of differential inclusion. 
\begin{defin}
	\label{Defin_DI}
	For any locally bounded set-valued mapping $\ca{H}: \Rn \rightrightarrows \Rn$ that is nonempty convex-valued and graph-closed, we say that the absolutely continuous mapping $\gamma:\bb{R}_+ \to \Rn$ is a solution for the differential inclusion 
	\begin{equation}
		\label{Eq_def_DI}
		\frac{\mathrm{d} x}{\mathrm{d}t} \in -\ca{H}(x),
	\end{equation}
	with initial point $x_0$, if $\gamma(0) = x_0$ and $\dot{\gamma}(t) \in -\ca{H}(\gamma(t))$ holds for almost every $t\geq 0$. 
\end{defin}

Based on Definition \ref{Defin_DI}, we define $\kappa(x, y) := 2\norm{\partial r_1(x) + \nabla_x f(x, y)}$. Then it is easy to verify that the set-valued mapping $(x, y) \mapsto \ca{B}_{\kappa(x, y)}$ is graph-closed and locally bounded over $\Rn \times \Rp$. Then we consider the following differential inclusion associated with \eqref{Eq_SubGDA},
\begin{equation}
    \label{Eq_SubGDA_DI}
    \left( \frac{\mathrm{d}x}{\mathrm{d}t}, \frac{\mathrm{d}y}{\mathrm{d}t} \right) \in -\left(\partial r_1(x) + \nabla_x f(x, y) + \left(\ca{N}_{\X}(x) \cap \ca{B}_{\kappa(x, y)}\right), - \theta R_{y, \eta}(x, y)  \right).
\end{equation}

We now introduce the concept of the Lyapunov function for the differential inclusion \eqref{Eq_def_DI} with a stable set $\A$. 
\begin{defin}
	\label{Defin_Lyapunov_function}
	Let $\A \subset \Rn$ be a closed set. A continuous function $\Psi:\Rn \to \bb{R}$ is referred to as a Lyapunov function for the differential inclusion \eqref{Eq_def_DI}, with the stable set $\A$, if it satisfies the following conditions:
	\begin{itemize}
		\item for any $\gamma$ that is a solution for \eqref{Eq_def_DI} with $\gamma(0) \notin \A$, it holds that $\Psi(\gamma(t)) < \Psi(\gamma(0))$ for any $t>0$;
		\item for any $\gamma$ that is a solution for \eqref{Eq_def_DI} with $\gamma(0) \in \A$, it holds that $\Psi(\gamma(t)) \leq \Psi(\gamma(0))$ for any $t\geq0$.
	\end{itemize}
\end{defin}

Then the following proposition illustrates that the differential inclusion \eqref{Eq_SubGDA_DI} admits $\Gamma_{(\eta, \alpha)}$ as its Lyapunov function with stable set $\ca{C}_h$. 
\begin{prop}
    \label{Prop_Lyapunov_SubGDA}
    Suppose Assumption \ref{Assumption_f_Minmax}, Assumption \ref{Assumption_alg_f} and Assumption \ref{Assumption_alg_f_addition} hold. Then $\Gamma_{(\eta_y, \alpha)}$ is a Lyapunov function for the differential inclusion \eqref{Eq_SubGDA_DI} with stable set $\ca{C}_h$. 
\end{prop}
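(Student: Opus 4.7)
The plan is to verify the two requirements of Definition \ref{Defin_Lyapunov_function} by differentiating $\Gamma_{(\eta,\alpha)}$ along an arbitrary solution $\gamma(t)=(x(t),y(t))$ of the differential inclusion \eqref{Eq_SubGDA_DI}. First I would note that $\Gamma_{(\eta,\alpha)}$ is path-differentiable: $r_1$ is so by Assumption \ref{Assumption_alg_f}(1), the term $(\alpha-1)r_2$ is convex (hence Clarke regular), and $\Xi_{(\eta,\alpha)}$ is $C^1$ with gradients given by Proposition \ref{Prop_fundmental_differential}. The chain rule for conservative fields then yields, for a.e.\ $t$ and any measurable selections $d_1(t)\in\partial r_1(x(t))$ and $d_2(t)\in\partial r_2(y(t))$,
$$
\frac{d}{dt}\Gamma_{(\eta,\alpha)}(\gamma(t))=\inner{\nabla_x\Xi_{(\eta,\alpha)}+d_1(t),\dot x(t)}+\inner{\nabla_y\Xi_{(\eta,\alpha)}+(\alpha-1)d_2(t),\dot y(t)}.
$$
I would take $d_1(t)$ to be the selection realizing $\dot x(t)=-(d_1(t)+\nabla_xf(\gamma(t))+v_x(t))$ in the DI, with $v_x(t)\in \ca{N}_\X(x(t))\cap \ca{B}_{\kappa(\gamma(t))}$, and $d_2(t)$ arbitrary.

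For the $x$-contribution I would split $\nabla_x\Xi_{(\eta,\alpha)}=\nabla_xf+\alpha\eta\nabla_{xy}^2f\,R_{y,\eta}(x,y)$ via Proposition \ref{Prop_fundmental_differential} and bound the Hessian cross-term by $|\alpha\eta\inner{\nabla_{xy}^2f\,R_{y,\eta},\dot x}|\le \alpha\eta L_f\|R_{y,\eta}\|\|\dot x\|$ using Assumption \ref{Assumption_f_Minmax}(1). The remaining piece $\inner{d_1+\nabla_xf,\dot x}$ I would evaluate via the path-differentiable function $r_1(x)+f(x,y(t))+\delta_\X(x)$: applying the chain rule with the admissible selection $-\dot x\in\partial r_1(x)+\nabla_xf+\ca{N}_\X(x)$ in its subdifferential, and using $\delta_\X\equiv 0$ along the trajectory (viable since $\X$ is convex by Assumption \ref{Assumption_alg_f}(2)), yields $\inner{d_1+\nabla_xf,\dot x}=-\|\dot x\|^2$, hence
$$
\inner{\nabla_x\Xi_{(\eta,\alpha)}+d_1,\dot x}\le -\|\dot x\|^2+\alpha\eta L_f\|R_{y,\eta}(x,y)\|\|\dot x\|.
$$
For the $y$-contribution I would plug $\dot y=\theta R_{y,\eta}(x,y)$ into the formula of $\nabla_y\Xi_{(\eta,\alpha)}$, then combine the strong concavity bound $\nabla_{yy}^2f\preceq -\mu I_p$ with Lemma \ref{Le_gradient_inequality_FBEY} applied to the $(\alpha-1)$-weighted residual (nonnegatively weighted since $\alpha\ge 1$) to obtain
$$
\inner{\nabla_y\Xi_{(\eta,\alpha)}+(\alpha-1)d_2,\theta R_{y,\eta}(x,y)}\le -\theta(\alpha\eta\mu-1)\|R_{y,\eta}(x,y)\|^2,
$$
where $\alpha\eta\mu-1\ge 1$ follows from $\alpha\ge 2/(\eta\mu)$.

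Adding the two estimates expresses $\frac{d}{dt}\Gamma_{(\eta,\alpha)}(t)$ as the negative of a quadratic form in $(\|\dot x\|,\|R_{y,\eta}(x,y)\|)$ with associated $2\times 2$ symmetric matrix
$$
A=\left[\begin{smallmatrix}1 & -\alpha\eta L_f/2\\ -\alpha\eta L_f/2 & \theta(\alpha\eta\mu-1)\end{smallmatrix}\right].
$$
A short computation using Assumption \ref{Assumption_alg_f_addition}(2), i.e.\ $\theta\ge \alpha\eta L_f^2/\mu$, together with $\alpha\eta\mu\ge 2$, gives $\det A\ge \alpha\eta L_f^2/(2\mu)>0$, so $A$ is positive definite and $\frac{d}{dt}\Gamma_{(\eta,\alpha)}(t)\le -c\bigl(\|\dot x(t)\|^2+\|R_{y,\eta}(\gamma(t))\|^2\bigr)$ for a uniform $c>0$. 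This yields the stable-set condition of Definition \ref{Defin_Lyapunov_function} immediately. For $\gamma(0)\notin \ca{C}_h$, Theorem \ref{Theo_Equivalence_Gamma} shows $\gamma(0)$ is not first-order stationary for \eqref{Prob_Pen}, so either $R_{y,\eta}(\gamma(0))\ne 0$, and continuity of $R_{y,\eta}$ propagates this to a neighborhood of $0$; or $0\notin\partial r_1(x(0))+\nabla_xf(\gamma(0))+\ca{N}_\X(x(0))$, and lower semicontinuity of the distance to the graph-closed multifunction defining $\dot x$ forces $\|\dot x(t)\|\ge c'>0$ on an interval. Integrating the quadratic-form bound then delivers $\Gamma_{(\eta,\alpha)}(\gamma(t))<\Gamma_{(\eta,\alpha)}(\gamma(0))$ for every $t>0$.

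The main obstacle I anticipate is the careful invocation of the conservative-field chain rule with a measurable subgradient selection consistent with the one realizing $\dot x(t)$ in the DI, and the verification of viability of the trajectory inside $\X\times \Y$ so that $\dot x(t)\in \ca{T}_\X(x(t))$ holds a.e.\ and $\delta_\X$ vanishes along the trajectory, which is what makes the identity $\inner{d_1+\nabla_xf,\dot x}=-\|\dot x\|^2$ legitimate. Once these measure-theoretic and viability issues are resolved, what remains is the two-variable positive-definiteness check described above.
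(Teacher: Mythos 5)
Your proposal is correct and follows essentially the same route as the paper's proof: differentiate $\Gamma_{(\eta,\alpha)}$ along a trajectory using path-differentiability and the gradient formulas of Proposition \ref{Prop_fundmental_differential}, kill the normal-cone term via $\inner{l_n,\dot x}=0$, control the $y$-part with Lemma \ref{Le_gradient_inequality_FBEY} and strong concavity, absorb the $\nabla^2_{xy}f$ cross term using $\theta\ge \alpha\eta L_f^2/\mu$ (your $2\times 2$ positive-definiteness check is just the paper's Young-inequality step in matrix form), and conclude strict decrease off $\ca{C}_h$ by a continuity/lower-semicontinuity argument on the integrand. The only cosmetic difference is that your $y$-bound $-\theta(\alpha\eta\mu-1)\norm{R_{y,\eta}}^2$ is the sharper (and in fact more carefully justified) version of the paper's stated constant.
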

\begin{proof}
    For any trajectory $(x(t), y(t))$ of the differential inclusion \eqref{Eq_SubGDA_DI}, there exists $l_x$, $l_n$ such that 
    $\dot{x}(t) = -\big(l_x(t) + \nabla_x f(x(t), y(t)) + l_n(t)\big)$ and $l_x(t) \in \partial r_1(x(t))$, $l_n(t) \in \ca{N}_{\X}(x(t))$ for almost every $t\geq 0$. 
    Then we have 
    \begin{equation}
        \label{Eq_Prop_Lyapunov_SubGDA_0}
        \begin{aligned}
            &\inner{ \partial \Gamma_{(\eta, \alpha)}(x(t), y(t)), (\dot{x}(t), \dot{y}(t)) }\\
            ={}& \inner{ \partial \Gamma_{(\eta, \alpha)}(x(t), y(t)), ( -l_x(t) - \nabla_x f(x(t),\; y(t)) - l_n(t),\; \theta R_{y, \eta}(x(t), y(t)) ) }\\
            ={}& -\inner{\partial r_1(x(t)) + \nabla_x \Xi_{(\eta, \alpha)}(x(t), y(t)),\;  l_x(t) + \nabla_x f(x(t), y(t)) + l_n(t)  } \\
            &+  \theta \inner{\nabla_y \Xi_{(\eta, \alpha)}(x(t), y(t)) -(\alpha-1) \partial r_2(y(t)),\; R_{y, \eta}(x(t), y(t)) }\\
            ={}& -\inner{\partial r_1(x(t)) + \nabla_x \Xi_{(\eta, \alpha)}(x(t), y(t)) + l_n(t) ,\;  l_x(t) + \nabla_x f(x(t), y(t)) + l_n(t)  } \\
            &+  \theta \inner{\nabla_y \Xi_{(\eta, \alpha)}(x(t), y(t)) -(\alpha-1) \partial r_2(y(t)),\; R_{y, \eta}(x(t), y(t)) }.
        \end{aligned}
    \end{equation}
    Here the second equality directly follows from the formulation of $\Gamma_{(\eta, \alpha)}$ and $\Xi_{(\eta, \alpha)}$ in \eqref{Prob_Pen}  and \eqref{Eq_defin_Xi}, respectively. Moreover, the third inequality follows from the convexity of $\X$, which implies that $\inner{l_n(t), \dot{x}(t)} = 0$ for almost every $t\geq 0$, as demonstrated in \cite[Theorem 6.2]{davis2020stochastic}. Therefore, with $\alpha = 2\max\{1, \frac{1}{\eta \mu}\}$ and $\theta \geq \frac{\alpha \eta L_f^2}{\mu}$, it follows from Lemma \ref{Le_gradient_inequality_FBEY} that 
    \begin{equation}
        \label{Eq_Prop_Lyapunov_SubGDA_1}
        \begin{aligned}
            &\inner{R_{y, \eta}(x(t), y(t)), \; \nabla_y \Xi_{(\eta, \alpha)}(x(t), y(t)) -(\alpha-1) \partial r_2(y(t)) } \\
            ={}& \Big\langle R_{y, \eta}(x(t), y(t)), \; (I_p + \alpha \eta \nabla_{yy}^2 f(x(t), y(t)) )  R_{y, \eta}(x(t), y(t))  \\
            &\qquad \qquad \qquad \qquad  -  (\alpha - 1) (\nabla_y f(x(t), y(t)) - R_{y, \eta}(x(t), y(t)) -\partial r_2(y(t)) ) \Big\rangle\\
            \leq{}& \inner{ R_{y, \eta}(x(t), y(t)),\;  (I_p + \alpha \eta \nabla_{yy}^2 f(x(t), y(t)) )  R_{y, \eta}(x(t), y(t)) }\\
            \leq{}& - \mu \alpha \eta \norm{R_{y, \eta}(x(t), y(t))}^2. 
        \end{aligned}
    \end{equation}
    Then by combing \eqref{Eq_Prop_Lyapunov_SubGDA_0} with \eqref{Eq_Prop_Lyapunov_SubGDA_1}, we have
    \begin{equation*}
        \begin{aligned}
            &\inf \inner{ \partial \Gamma_{(\eta, \alpha)}(x(t), y(t)), (\dot{x}(t), \dot{y}(t)) }\\
            \leq{}& - \norm{l_x(t) + \nabla_x f(x(t), y(t)) + l_n(t)}^2  + \alpha \eta L_f \norm{R_{y,\; \eta}(x(t), y(t))} \norm{l_x(t) + \nabla_x f(x(t), y(t)) + l_n(t)}\\
            & - \alpha \theta \eta \mu \norm{R_{y, \eta}(x(t), y(t))}^2\\
            \leq{}& - \frac{1}{2}\norm{l_x(t) + \nabla_x f(x(t), y(t)) + l_n(t)}^2 - \frac{\alpha \theta \mu\eta}{2} \norm{R_{y, \eta}(x(t), y(t))}^2. 
        \end{aligned}
    \end{equation*}
    Here the second inequality directly follows from the choice of $\theta$ that guarantees $\alpha^2 \eta^2 L_f \leq \alpha \theta \mu \eta$. Together with the path-differentiability of $\Gamma_{(\eta, \alpha)}$, we have
    \begin{equation*}
        \begin{aligned}
            &\Gamma_{(\eta, \alpha)}(x(s), y(s)) - \Gamma_{(\eta, \alpha)}(x(0), y(0)) \\
            \leq{}& - \int_{0}^t  \frac{1}{2}\norm{l_x(t) + \nabla_x f(x(t), y(t)) + l_n(t)}^2 + \frac{\alpha \theta \mu\eta}{2} \norm{R_{y, \eta}(x(t), y(t))}^2 \mathrm{d}t\\
            \leq{}& - \int_{0}^t  \frac{1}{2} \mathrm{dist}\Big( 0, \partial r_1(x(t)) + \nabla_x f(x(t), y(t)) + \ca{N}_{\X}(x(t)) \Big)^2 + \frac{\alpha \theta \mu\eta}{2} \norm{R_{y, \eta}(x(t), y(t))}^2 \mathrm{d}t.
        \end{aligned}
    \end{equation*}

    Therefore, for any trajectory that satisfies $(x(0), y(0)) \notin \ca{C}_h$, we can conclude from the continuity of $(x(t), y(t))$ that there exists $t_0> 0$ such that $(x(t), y(t)) \notin \ca{C}_h$ for all $t\in [0, t_0]$. Then for any $t \in [0, t_0]$, it holds that either $0 \notin \partial r_1(x(0)) + \nabla_x f(x(0), y(0)) + \ca{N}_{\X}(x(0))$ or $0\neq R_{y, \eta}(x(0), y(0))$. As a result, for any $s > 0$, it holds that 
    \begin{equation*}
        \begin{aligned}
            &\Gamma_{(\eta, \alpha)}(x(s), y(s)) - \Gamma_{(\eta, \alpha)}(x(0), y(0)) \\
            \leq{}& - \int_{0}^{\min\{t_0, s\}}  \frac{1}{2} \mathrm{dist}\Big( 0, \partial r_1(x(t)) + \nabla_x f(x(t), y(t)) + \ca{N}_{\X}(x(t)) \Big)^2 + \frac{\alpha \theta \mu\eta}{2} \norm{R_{y, \eta}(x(t), y(t))}^2 \mathrm{d}t\\
            <{}& 0.
        \end{aligned}
    \end{equation*}
    Therefore, we can conclude that $\Gamma_{(\eta, \alpha)}$ is a Lyapunov function for the differential inclusion \eqref{Eq_SubGDA_DI} with stable set $\ca{C}_h$. This completes the proof. 
\end{proof}

Proposition \ref{Prop_Lyapunov_SubGDA} illustrates that the subgradient descent-ascent method \eqref{Eq_SubGDA} can be regarded as an inexact subgradient descent method for minimizing $\Gamma_{(\eta, \alpha)}$ over $\X \times \Y$. Therefore, we can directly employ the ODE approaches \cite{benaim2005stochastic,borkar2009stochastic,duchi2018stochastic,davis2020stochastic,josz2023lyapunov,xiao2023convergence} to establish the convergence of \eqref{Eq_SubGDA}. 
Based on the results in \cite[Theorem 3.5]{xiao2023convergence}, we
can establish the following theorem illustrating the convergence of \eqref{Eq_SubGDA}.
\begin{theo}
    \label{Theo_convergence_SubSGD}
    Suppose Assumption \ref{Assumption_f_Minmax}, Assumption \ref{Assumption_alg_f} and Assumption \ref{Assumption_alg_f_addition} hold. Then for any given $\varepsilon > 0$, there exists $\varepsilon_{ub}> 0$ such that for any sequences $\{\eta_{x, k}\}$ and $\{\eta_{y,k}\}$ that satisfy $\sup_{k\geq 0} \max\{\eta_{x,k}, \eta_{y,k}\} \leq \varepsilon_{ub}$, for the sequence $\{(\xk, \yk)\}$ generated by \eqref{Eq_SubGDA}, it holds that 
    \begin{equation}
        \limsup_{k\to +\infty}~ \mathrm{dist}\left( (\xk, \yk), \ca{C}_h \right) \leq \varepsilon. 
    \end{equation}
\end{theo}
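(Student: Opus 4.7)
The plan is to apply the ODE-based convergence framework developed in \cite{xiao2023convergence}, treating \eqref{Eq_SubGDA} as a perturbed forward-Euler discretization of the differential inclusion \eqref{Eq_SubGDA_DI} and using Proposition \ref{Prop_Lyapunov_SubGDA} as the Lyapunov ingredient. Taking $\eta_{x,k}$ as the master stepsize, the $x$-update is a standard projected subgradient step whose direction lies in $-(\partial r_1(\xk) + \nabla_x f(\xk, \yk) + \ca{N}_{\X}(\xk))$, while the $y$-update, after dividing by $\eta_{x,k}$, moves in the direction $(\eta_{y,k}/\eta_{x,k})\, R_{y,\eta}(\xkp, \yk)$. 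By Assumption \ref{Assumption_alg_f_addition}(2) this ratio converges to $\theta$, and the evaluation at $\xkp$ rather than $\xk$ can be absorbed as an asymptotically vanishing perturbation since $\norm{\xkp - \xk} = O(\eta_{x,k})$ and $R_{y,\eta}$ is locally Lipschitz in its first argument under Assumption \ref{Assumption_f_Minmax}.

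Next I would verify the structural preconditions of the framework. For boundedness of the iterates, Proposition \ref{Prop_Lyapunov_SubGDA} gives descent of $\Gamma_{(\eta,\alpha)}$ along continuous trajectories, and a standard Euler-approximation argument combined with Assumption \ref{Assumption_alg_f_addition}(1) yields a uniform upper bound on $\Gamma_{(\eta,\alpha)}(\xk,\yk)$ up to an $O(\sup_k \eta_{x,k})$ error; together with the coercivity in Assumption \ref{Assumption_alg_f_addition}(3) this confines the sequence to a compact subset of $\X \times \Y$. On this compact set, the set-valued field driving \eqref{Eq_SubGDA_DI} is nonempty, convex-valued, locally bounded, and graph-closed: path-differentiability of $r_1$ from Assumption \ref{Assumption_alg_f}(1) supplies the conservative-field structure for $\partial r_1$, convexity and closedness of $\X$ from Assumption \ref{Assumption_alg_f}(2) handle the normal-cone component (with the truncation $\ca{B}_{\kappa(x,y)}$ ensuring local boundedness), and the continuity of the proximal map under Assumption \ref{Assumption_f_Minmax}(4) handles $R_{y,\eta}$.

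The third ingredient is the nonsmooth Morse--Sard condition. Theorem \ref{Theo_Equivalence_Gamma} identifies $\ca{C}_h$ with the first-order stationary set of \eqref{Prob_Pen}, and the argument in Proposition \ref{Prop_BoundBelow_Gamma} shows that on any such point one has $R_{y,\eta} = 0$ and hence $\Gamma_{(\eta,\alpha)}(x,y) = h(x,y)$. Therefore $\Gamma_{(\eta,\alpha)}(\ca{C}_h) \subseteq h(\ca{C}_h)$, which is finite by Assumption \ref{Assumption_alg_f}(3). Combining (a) boundedness, (b) the Lyapunov descent of Proposition \ref{Prop_Lyapunov_SubGDA}, (c) regularity of the driving set-valued map, and (d) the finiteness of Lyapunov values on $\ca{C}_h$, \cite[Theorem 3.5]{xiao2023convergence} delivers the asserted conclusion: for every prescribed $\varepsilon > 0$ there is a threshold $\varepsilon_{ub} > 0$ such that whenever $\sup_k \max\{\eta_{x,k}, \eta_{y,k}\} \leq \varepsilon_{ub}$, the sequence $\{(\xk,\yk)\}$ eventually enters and stays within the $\varepsilon$-neighborhood of $\ca{C}_h$.

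The main technical obstacle is handling the two-stepsize coupling rigorously. Although Assumption \ref{Assumption_alg_f_addition}(2) produces a single-timescale limit with effective ratio $\theta$, one needs quantitative control over both the deviation $\eta_{y,k}/\eta_{x,k} - \theta$ and the perturbation from evaluating $R_{y,\eta}$ at $(\xkp, \yk)$ rather than $(\xk, \yk)$. Both reduce to Lipschitz-type estimates on $R_{y,\eta}$ on compact sets, which follow from classical sensitivity analysis of the proximal map under Assumption \ref{Assumption_f_Minmax}(1)--(2); once these errors are absorbed into the tracking noise allowed by the ODE framework, the remainder of the proof is a direct invocation of the cited theorem.
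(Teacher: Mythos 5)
Your proposal follows essentially the same route as the paper: both proofs reduce the statement to a direct invocation of \cite[Theorem 3.5]{xiao2023convergence}, using Proposition \ref{Prop_Lyapunov_SubGDA} to supply the Lyapunov function $\Gamma_{(\eta,\alpha)}$ for \eqref{Eq_SubGDA_DI}, the regularity (convex-valuedness, local boundedness, graph-closedness) of the driving set-valued map, and Assumption \ref{Assumption_alg_f}(3) for the Morse--Sard condition. Your write-up is in fact more careful than the paper's on two points it leaves implicit --- the treatment of the two-stepsize coupling and the evaluation of $R_{y,\eta}$ at $\xkp$ as vanishing perturbations, and the identification $\Gamma_{(\eta,\alpha)} = h$ on $\ca{C}_h$ needed to transfer the finiteness assumption to the Lyapunov values --- but these are refinements of the same argument rather than a different one.
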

\begin{proof}
    We first verify the validity of the conditions in \cite[Theorem 3.5]{xiao2023convergence}. The set-value mapping $(x, y) \mapsto \left(\partial r_1(x) + \nabla_x f(x, y) + \left(\ca{N}_{\X}(x) \cap \ca{B}_{\kappa(x, y)}\right), - \theta R_{y, \eta_y}(x, y)  \right)$ is convex-valued, locally bounded and graph-closed. In addition, the differential inclusion \eqref{Eq_SubGDA_DI} admits $\Gamma_{(\eta, \alpha)}$ as its Lyapunov function. Then  together with Assumption \ref{Assumption_alg_f}(3), we can conclude that the differential inclusion \eqref{Eq_SubGDA_DI} satisfies \cite[Assumption 3.3]{xiao2023convergence}. Therefore, based on the results in \cite[Theorem 3.5]{xiao2023convergence}, we can conclude that $\limsup_{k\to +\infty}~ \mathrm{dist}\left( (\xk, \yk), \ca{C}_h \right) \leq \varepsilon$. 
    This completes the proof.
\end{proof}

\section{Numerical Experiments}
In this section, we perform preliminary numerical experiments to evaluate the efficiency of our proposed approach to solve adversarial attacks in resource allocation problems \cite{tsaknakis2023minimax}. All numerical experiments in this section are conducted on a server equipped with an Intel i7-1260P CPU, running Python 3.8, PyTorch 2.3.1, NumPy 1.16.4, and SciPy 1.11.4.

\begin{table}[tb]
\centering
\begin{tabular}{cc|ccccc}
\hline
                                           &          & fval & iter & stat & feas & time \\ \hline
\multicolumn{1}{l|}{\multirow{3}{*}{$n, p=10$}}& L-BFGS-B &-6.07e+00   & 77  & 1.64e-08  & 1.67e-07  & 0.15\\
\multicolumn{1}{l|}{}                      & TNC      &-6.07e+00   & 30  & 4.42e-09  & 2.11e-08  & 0.21\\
\multicolumn{1}{l|}{}                      & GDA      &-6.07e+00   & 4361  & 9.98e-08  & 0.00e+00  & 5.70\\ \hline
\multicolumn{1}{l|}{\multirow{3}{*}{$n, p=20$}} & L-BFGS-B &-1.11e+01   & 94  & 1.50e-08  & 1.12e-07  & 0.16\\
\multicolumn{1}{l|}{}                      & TNC      &-1.11e+01   & 47  & 2.83e-09  & 3.71e-08  & 0.33\\
\multicolumn{1}{l|}{}                      & GDA      &-1.11e+01   & 3325  & 9.99e-08  & 5.39e-08  & 4.47\\ \hline       
\multicolumn{1}{l|}{\multirow{3}{*}{$n, p=50$}} & L-BFGS-B &-2.69e+01   & 292  & 3.88e-08  & 2.25e-07  & 0.48\\
\multicolumn{1}{l|}{}                      & TNC      &-2.69e+01   & 83  & 4.03e-09  & 3.72e-08  & 0.75\\
\multicolumn{1}{l|}{}                      & GDA      &-2.69e+01   & 10000  & 2.92e-05  & 2.13e-05  & 15.78\\ \hline
\multicolumn{1}{l|}{\multirow{3}{*}{$n, p=100$}} & L-BFGS-B &-5.20e+01   & 319  & 2.98e-08  & 3.51e-07  & 0.58\\
\multicolumn{1}{l|}{}                      & TNC      &-5.20e+01   & 143  & 7.25e-09  & 8.21e-08  & 1.16\\
\multicolumn{1}{l|}{}                      & GDA      &-5.19e+01   & 10000  & 1.51e-06  & 6.49e-07  & 13.91\\ \hline
\multicolumn{1}{l|}{\multirow{3}{*}{$n, p=200$}} & L-BFGS-B &-1.04e+02   & 458  & 2.71e-08  & 3.65e-07  & 1.49\\
\multicolumn{1}{l|}{}                      & TNC      &-1.04e+02   & 268  & 3.47e-09  & 1.63e-08  & 4.32\\
\multicolumn{1}{l|}{}                      & GDA      &-1.04e+02   & 10000  & 1.57e-05  & 6.02e-06  & 37.58\\ \hline
\multicolumn{1}{l|}{\multirow{3}{*}{$n, p=500$}} & L-BFGS-B &-2.56e+02   & 773  & 2.79e-08  & 5.54e-07  & 3.65\\
\multicolumn{1}{l|}{}                      & TNC      &-2.56e+02   & 679  & 4.07e-09  & 4.07e-08  & 20.71\\
\multicolumn{1}{l|}{}                      & GDA      &-2.56e+02   & 10000  & 2.93e-05  & 9.94e-06  & 33.96\\ \hline
\multicolumn{1}{l|}{\multirow{3}{*}{$n, p=1000$}} & L-BFGS-B &-5.08e+02   & 1129  & 3.09e-08  & 2.95e-07  & 8.72\\
\multicolumn{1}{l|}{}                      & TNC      &-5.08e+02   & 1274  & 3.70e-09  & 4.39e-08  & 33.32\\
\multicolumn{1}{l|}{}                      & GDA      &-5.08e+02   & 10000  & 3.75e-05  & 5.80e-06  & 44.55\\ \hline
\end{tabular}
\caption{Numerical results for solving \eqref{Eq_test_synthetic} with fixed $c = 1$. }
\label{Table_num1_fixc}
\end{table}

\begin{table}[tb]
\centering
\begin{tabular}{cc|ccccc}
\hline
                                           &          & fval & iter & stat & feas & time \\ \hline
\multicolumn{1}{l|}{\multirow{3}{*}{$c=0.1$}}& L-BFGS-B &-6.71e+01   & 200  & 6.89e-09  & 2.71e-06  & 0.38 \\
\multicolumn{1}{l|}{}                      & TNC      &-6.31e+01   & 301  & 5.55e-09  & 1.57e-06  & 3.49\\
\multicolumn{1}{l|}{}                      & GDA      &-6.65e+01   & 6043  & 9.78e-08  & 9.88e-07  & 7.94\\ \hline
\multicolumn{1}{l|}{\multirow{3}{*}{$c=0.2$}} & L-BFGS-B &-6.37e+01   & 125  & 2.63e-09  & 7.72e-08  & 0.27\\
\multicolumn{1}{l|}{}                      & TNC      &-5.95e+01   & 233  & 1.58e-09  & 1.75e-07  & 2.22\\
\multicolumn{1}{l|}{}                      & GDA      &-6.30e+01   & 6480  & 9.74e-08  & 2.04e-08  & 8.24\\ \hline             
\multicolumn{1}{l|}{\multirow{3}{*}{$c=0.5$}} & L-BFGS-B &-5.60e+01   & 178  & 4.99e-09  & 1.12e-07  & 0.34\\
\multicolumn{1}{l|}{}                      & TNC      &-5.53e+01   & 218  & 1.89e-09  & 1.10e-08  & 1.77\\
\multicolumn{1}{l|}{}                      & GDA      &-5.56e+01   & 10000  & 7.47e-06  & 1.87e-05  & 13.50\\ \hline
\multicolumn{1}{l|}{\multirow{3}{*}{$c=0.7$}} & L-BFGS-B &-5.33e+01   & 268  & 7.21e-09  & 7.27e-08  & 0.49\\
\multicolumn{1}{l|}{}                      & TNC      &-5.33e+01   & 215  & 2.42e-09  & 6.33e-08  & 1.85\\
\multicolumn{1}{l|}{}                      & GDA      &-5.31e+01   & 10000  & 1.36e-06  & 1.40e-06  & 15.85\\ \hline
\multicolumn{1}{l|}{\multirow{3}{*}{$c=1.0$}} & L-BFGS-B &-5.20e+01   & 319  & 2.98e-08  & 3.51e-07  & 0.58\\
\multicolumn{1}{l|}{}                      & TNC      &-5.20e+01   & 143  & 7.25e-09  & 8.21e-08  & 1.16\\
\multicolumn{1}{l|}{}                      & GDA      &-5.19e+01   & 10000  & 1.51e-06  & 6.49e-07  & 13.91\\ \hline
\multicolumn{1}{l|}{\multirow{3}{*}{$c=2.0$}} & L-BFGS-B &-5.22e+01   & 386  & 8.60e-09  & 0.00e+00  & 0.65\\
\multicolumn{1}{l|}{}                      & TNC      &-5.22e+01   & 134  & 1.49e-09  & 0.00e+00  & 1.19\\
\multicolumn{1}{l|}{}                      & GDA      &-5.21e+01   & 10000  & 1.37e-06  & 0.00e+00  & 13.75\\ \hline
\multicolumn{1}{l|}{\multirow{3}{*}{$c=5.0$}} & L-BFGS-B &-5.22e+01   & 347  & 6.49e-09  & 0.00e+00  & 0.64\\
\multicolumn{1}{l|}{}                      & TNC      &-5.22e+01   & 127  & 1.13e-09  & 0.00e+00  & 1.13\\
\multicolumn{1}{l|}{}                      & GDA      &-5.22e+01   & 8292  & 1.00e-07  & 0.00e+00  & 19.01\\ \hline
\end{tabular}
\caption{Numerical results for solving \eqref{Eq_test_synthetic}
with fixed $(n,p) = (100,100)$. }
\label{Table_num1_fixnp}
\end{table}

We evaluate the efficiency of existing solvers for minimization problems on solving the following synthetic test problem,
\begin{equation}
    \label{Eq_test_synthetic}
    \min_{0\leq x\leq 1} \left(\max_{y \in \Rp, ~x+ y\leq c} \inner{b, x} +  \inner{x, By} - \frac{1}{2} \norm{y}^2\right). 
\end{equation}
Here, the matrix $B \in \bb{R}^{n\times p}$ and the vector $b \in \Rn$ are randomly generated by the ``randn'' function in the NumPy package. Then we project $b$ to the unit sphere in $\Rn$. Moreover, $c > 0$ is a prefixed constant. It is easy to verify that the optimization problem \eqref{Eq_test_synthetic} satisfies all the assumptions in Assumption \ref{Assumption_f}. When transferring \eqref{Eq_test_synthetic} into the form of minimization problem \eqref{Prob_Pen}, we choose $\eta = 1$ and $\alpha = 100$ for all test instances.

In order to evaluate the efficiency of existing solvers for minimization problems over $\X \times \Y$, we choose the ``L-BFGS-B'' and ``TNC'' solvers from the SciPy package to solve the minimization problem \eqref{Prob_Pen}. The L-BFGS-B solver uses the limited memory BFGS algorithm \cite{byrd1995limited,zhu1997algorithm} to solve optimization problems with bounded constraints and is originally developed in FORTRAN language \cite{zhu1997algorithm}. We call the L-BFGS-B solver in our numerical experiments by its Python interface through the SciPy package, where we set ``gtol'' as $10^{-7}$ and keep all the other parameters as their default values. 
Moreover, the TNC solver uses a truncated Newton method \cite{nash1984newton,nocedal1999numerical} to solve optimization problems with bounded constraints, where the subproblems are solved by the truncated conjugate gradient method \cite{yuan2000truncated}. The TNC solver is implemented in C language, and we use its Python interface through the SciPy package. In all the test instances, we set ``gtol = $10^{-7}$'' and ``maxfun = $10000$'' while keeping all the other parameters as their default values. Furthermore, we also test the performance of the proximal gradient descent-ascent (GDA) method \cite{xu2023unified} for solving \eqref{Prob_Minmax}.  For the GDA method, we choose the best stepsizes for minimization and maximization parts from $\{a_1 \times 10^{-a_2}: a_1 \in \{ 1,3,5,7,9\}, ~ a_2 \in \{1, 2, 3, 4\} \}$ in each test instance. Then we terminate the GDA solver when $\norm{ \mathrm{Proj}_{\X \times \Y}\left( (\xk, \yk) - \nabla \Gamma_{(\eta, \alpha)}(\xk, \yk) \right)  - (\xk, \yk)} \leq 10^{-7}$.  In addition, it is worth mentioning that under our settings, all the tested solvers only utilize the first-order derivatives of the objective function and constraints. Moreover, we employ the automatic differentiation package from PyTorch \cite{paszke2017automatic} to automatically compute the gradients of $h$ and $\Gamma_{(\eta, \alpha)}$.

In the numerical results, we report the function value obtained, as well as the feasibility and stationarity of the result $(\tilde{x}, \tilde{y})$ obtained by the tested solvers. Here the feasibility is measured by $\norm{\max\{ \tilde{x} + \tilde{y} - c, 0 \}}$, and the stationarity is measured by $\frac{\norm{ \mathrm{Proj}_{\X \times \Y}\left( (\tilde{x}, \tilde{y}) - \nabla \Gamma_{(\eta, \alpha)}(\tilde{x}, \tilde{y}) \right)  - (\tilde{x}, \tilde{y})} }{\norm{\nabla \Gamma_{(\eta, \alpha)}(x_0, y_0)}}$. 
Table \ref{Table_num1_fixc} and Table \ref{Table_num1_fixnp} present the results of our numerical experiments. Here ``fval'', ``iter'', ``stat'', ``feas'', and ``time'' refer to the function value of $\Gamma_{(\eta, \alpha)}$ at the final solutions, the total iterations taken by the solvers, the stationarity of the final solutions, the feasibility, and the wall-clock time taken by the solvers, respectively. From these tables, we can conclude that directly applying L-BFGS-B and TNC solvers to minimize $\Gamma_{(\eta, \alpha)}$ achieves superior performance to the GDA method for solving the minimax problem \eqref{Prob_Minmax} in all the test instances. Moreover, we notice that the L-BFGS-B solver is faster than the TNC solver while achieving similar stationarity in most test instances. Therefore, we can conclude that \eqref{Prob_Pen} enables the direct implementations of existing solvers for minimization  problems
to efficiently solve the minimax problem  \eqref{Prob_Con_Minmax} through \eqref{Prob_Pen}.

\section{Conclusion}
In this paper, we study a class of nonconvex-strongly-concave minimax 
problem \eqref{Prob_Con_Minmax}, where the constraints of the maximization part depend on the variables of the minimization part. We first prove that under Assumption \ref{Assumption_f}, \eqref{Prob_Con_Minmax} and \eqref{Prob_Pen_Minmax} have the same first-order, local and global minimax points. Therefore, directly applying the descent-ascent methods to solve \eqref{Prob_Pen_Minmax} yields first-order minimax points of \eqref{Prob_Con_Minmax}. Then we focus on the nonconvex-strongly-concave 
minimax problem \eqref{Prob_Minmax} over $\X \times \Y$, which is a generalization of \eqref{Prob_Pen_Minmax}. Motivated by forward-backward envelope for minimization problems, we consider the PFBE for the minimax 
problem \eqref{Prob_Minmax} and present its basic properties. Moreover, by performing PFBE to \eqref{Prob_Minmax}, we propose the minimization problem \eqref{Prob_Pen} and prove that the first-order stationary points of \eqref{Prob_Pen} coincide with the first-order minimax points of \eqref{Prob_Minmax}.

Based on the equivalence between \eqref{Prob_Minmax} and \eqref{Prob_Pen}, various existing optimization approaches for minimization problems over $\X \times \Y$ can be directly implemented to solve \eqref{Prob_Minmax}. Consequently, we can also directly apply these optimization approaches for minimization problems to solve \eqref{Prob_Con_Minmax} through \eqref{Prob_Pen_Minmax}. In particular, we show the convergence of the subgradient descent-ascent method under nonsmooth nonconvex settings, where $\Gamma_{(\eta, \alpha)}$ in \eqref{Prob_Pen} serves as the Lyapunov function of the associated differential inclusion \cite{xiao2023convergence}. Moreover, we perform preliminary numerical experiments, where some existing efficient numerical solvers, including L-BFGS-B and TNC, are applied to solve \eqref{Prob_Con_Minmax} through \eqref{Prob_Pen}. The numerical results demonstrate that applying these solvers to \eqref{Prob_Pen} can achieve superior performance in efficiency over applying
the gradient descent-ascent method to the minimax problem \eqref{Prob_Minmax}. These results demonstrate the promising potential of our proposed minimization approach for solving \eqref{Prob_Con_Minmax} through \eqref{Prob_Pen}.

\section*{Acknowledgement}
 The research of Xiaoyin Hu is supported by Zhejiang Provincial Natural Science Foundation of China under Grant (No. LQ23A010002), the National Natural Science Foundation of China (Grant No. 12301408), and the advanced computing resources provided by the Supercomputing Center of HZCU.

\bibliographystyle{plain}
\bibliography{ref}

\end{document}